	\definecolor{mycolor}{rgb}{0.122, 0.435, 0.698}
	\newtcbox{\mybox}{
		nobeforeafter,
		colframe=mycolor,
		colback=mycolor!10!white,
		boxrule=0.5pt,
		arc=4pt,
		boxsep=0pt,
		left=6pt,
		right=6pt,
		top=6pt,
		bottom=6pt,
		box align=center,
			halign=center,	
		tcbox raise base
		}
\tikzset{
>=stealth',
  punktchain/.style={
    rectangle, 
    rounded corners, 
    fill=green!30,
    draw=blue, very thick,
    text width=10em, 
    minimum height=3em, 
    text centered, 
    on chain},
  line/.style={blue,draw, thick, <-},
  element/.style={
    tape,
    top color=white,
    bottom color=blue!50!black!60!,
    minimum width=8em,
    draw=blue!40!black!90, very thick,
    text width=10em, 
    minimum height=3.5em, 
    text centered, 
    on chain},
  every join/.style={blue,->, thick,shorten >=1pt},
  decoration={brace},
  tuborg/.style={decorate},
  tubnode/.style={midway, right=2pt},
}
	\newcommand\bs{\boldsymbol}
	\newcommand\bmu{\boldsymbol\mu}
	\newcommand\bxi{\boldsymbol\xi}
	\newcommand\bzeta{\boldsymbol\zeta}
	\newcommand\bSigma{\boldsymbol\Sigma}
	\newcommand\bDelta{\boldsymbol\Delta}
	\newcommand\bfE{\mathbf E}
	\newcommand\bfP{\mathbf P}
	\newcommand\bfQ{\mathbf Q}
	\newcommand\bfU{\mathbf U}
	\newcommand\bfSigma{\mathbf\Sigma}
	\newcommand\bfM{\mathbf M}
	\newcommand\bfA{\mathbf A}
	\newcommand\bfB{\mathbf B}
	\newcommand\bX{\boldsymbol X}
	\newcommand\bY{\boldsymbol Y}
	\newcommand\bv{\boldsymbol v}
	\newcommand\bu{\boldsymbol u}
	\newcommand\bw{\boldsymbol w}
	\newcommand\RR{\mathbb R}
	\newcommand\bfI{\mathbf I}
	\def\GMmu{\hat\bmu_n^{\textup{GM}}}
	\def\tr{\mathop{\textup{Tr}}}
	\newcommand\rSigma{\textbf{\textup{r}}_{\scriptscriptstyle\bSigma}}
	\newcommand\btheta{\boldsymbol\theta}
	\def\tilde{\widetilde}
	\def\hat{\widehat}
	\def\ds{\displaystyle}
	\theoremstyle{plain}
	\newtheorem{theorem}{Theorem}
	\Crefname{fact}{Fact}{Facts}
	\newtheorem{proposition}{Proposition}
	\newtheorem{lemma}{Lemma}
	\theoremstyle{remark}
	\newtheorem{definition}{Definition}
\begin{document}

	\begin{frontmatter}

	\title{All-In-One Robust Estimator of the Gaussian Mean }

	\runtitle{Robust estimation of a Gaussian mean}

	\begin{aug}
	\author[A]{\fnms{Arnak S.} \snm{Dalalyan}\ead[label=e1]{arnak.dalalyan@ensae.fr}}
	\and
	\author[B]{\fnms{Arshak} \snm{Minasyan}\ead[label=e2]{minasyan@yerevann.com}}
	\runauthor{Dalalyan and Minasyan}

    \address[A]{ENSAE-CREST, \printead{e1}}
    \address[B]{Yerevan State University, YerevaNN, \printead{e2}}

	\end{aug}


	\begin{abstract}
	The goal of this paper is to show that a single robust estimator of 
	the mean of a multivariate Gaussian distribution can enjoy five
	desirable properties. First, it is computationally
	tractable in the sense that it can be computed in a time which 
	is at most	polynomial in dimension, sample size and the logarithm 
	of the inverse of the contamination rate. Second, it is equivariant 
	by translations, uniform scaling and orthogonal transformations. 
	Third, it has a high breakdown point equal to $0.5$, and a
	nearly-minimax-rate-breakdown point approximately equal to $0.28$.
	Fourth, it is minimax rate optimal, up to a logarithmic factor, 
	when data consists of independent observations corrupted by
	adversarially chosen outliers. Fifth, it is asymptotically efficient
	when the rate of contamination tends to zero. The estimator is
	obtained by an 
	iterative reweighting approach. Each sample point is assigned a 
	weight that is iteratively updated by solving a convex optimization 
	problem. We also establish a dimension-free non-asymptotic risk 
	bound for the expected error of the proposed estimator. It is 
	the first result of this kind in the literature and involves 
	only the effective rank of the covariance matrix. Finally, we
	show that the obtained results can be extended to sub-Gaussian
	distributions, as well as to the cases of unknown rate of contamination
	 or unknown covariance matrix. 
	\end{abstract}

	\begin{keyword}[class=AMS]
	\kwd[Primary ]{62H12}
	\kwd{}
	\kwd[; secondary ]{62F35}
	\end{keyword}

	\begin{keyword}
	\kwd{Gaussian mean}
	\kwd{robust estimation}
	\kwd{breakdown point}
	\kwd{minimax rate}
	\kwd{computational tractability}
	\end{keyword}

	\end{frontmatter}

	\maketitle

	\itemsep=0pt
	\setcounter{tocdepth}{1}
	\tableofcontents

	\section{Introduction}\label{sec:intro}

    Robust estimation is one of the most fundamental 
    problems in statistics. Its goal is to design efficient 
    methods capable of processing data sets contaminated 
    by outliers, so that these outliers have little influence 
    on the final result. The notion of an outlier is hard to 
    define for a single data point. It is also hard, inefficient 
    and often impossible to clean data by removing the outliers. 
    Instead, one can build methods that take as input the 
    contaminated data set and provide as output an estimate 
    which is not very sensitive to the contamination. Recent 
    advances in data acquisition and computational power 
    provoked a revival of interest in robust estimation and 
    learning, with a focus on finite sample results and 
    computationally tractable procedures. This was in contrast 
    to the more traditional studies analyzing asymptotic 
    properties of such statistical methods. 
    
    This paper builds on recent advances made in robust 
    estimation and suggests a method that has attractive 
    properties both from asymptotic and finite-sample points 
    of view. Furthermore, it is computationally tractable 
    and its statistical complexity depends optimally on 
    the dimension. As a matter of fact, we even show that 
    what really matters is the intrinsic dimension, defined 
    in the Gaussian model as the effective rank of the 
    covariance matrix. 
    
    Note that in the framework of robust estimation, the 
    high-dimensional setting is qualitatively different 
    from the one dimensional setting. This qualitative
    difference can be shown at two levels. First, from a 
    computational point of view, the running time of several 
    robust methods scales poorly with dimension. Second, from 
    a statistical point of view, while a simple ``remove then 
    average'' strategy might be successful in low-dimensional 
    settings, it can easily be seen to fail in the high 
    dimensional case. Indeed, assume that for some $\varepsilon
    \in(0,1/2)$, $p\in\mathbb N$, and $n\in\mathbb N$, the 
    data $\bX_1,\ldots,\bX_n$ consist of $n(1-\varepsilon)$ points (inliers) drawn 
    from a $p$-dimensional Gaussian distribution $\mathcal N_p
    (0,\bfI_p)$ (where $\bfI_p$ is the $p\times p$ identity 
    matrix) and $\varepsilon n$ points (outliers) equal to a 
    given vector $\bu$. Consider an idealized setting in which, for a given threshold $r>0$, an oracle tells the user whether or not $\bX_i$ is within a distance $r$ of the true mean $0$. A simple strategy for robust mean 
    estimation consists of removing all the points of Euclidean 
    norm larger than $2\sqrt{p}$ and averaging all the remaining 
    points. If the norm of $\bu$ is equal to $\sqrt{p}$, one can 
    check that the distance between this estimator and the true 
    mean $\bmu = 0$ is of order 
    $\sqrt{p/n} + \varepsilon\|\bu\|_2 = \sqrt{p/n} + 
    \varepsilon\sqrt{p}$. This error rate is provably optimal
    in the small dimensional setting $p=O(1)$, but suboptimal
    as compared to the optimal rate $\sqrt{p/n} + \varepsilon$
    when the dimension $p$ is not constant. 
    The reason of this suboptimality is that the individually 
    harmless outliers, lying close to the bulk of the point cloud, have
    a strong joint impact on the quality of estimation. 
    
    We postpone a review of the relevant prior work 
    to~\Cref{sec:discuss} in order to ease comparison with
    our results, and proceed here with a summary of our 
    contributions. In the 
    context of a data set subject to a fully adversarial 
    corruption, we introduce a new estimator of the Gaussian 
    mean that enjoys the following properties (the precise 
    meaning of these properties is given in \Cref{sec:setting}):
    \vspace{-5pt}
	\begin{itemize}
	\item it is computable in polynomial time,
	\item it is equivariant with respect to similarity transformations 
	(translations, uniform scaling and orthogonal transformations),
	\item it has a high (minimax) breakdown point: $\varepsilon^* 
	= (5-\sqrt{5})/10\approx 0.28$,
	\item it is minimax-rate-optimal, up to a logarithmic factor,
	\item it is asymptotically efficient when the rate of 
	contamination tends to zero,
	\item for inhomogeneous covariance matrices, it achieves a 
	better sample complexity than all the other previously 
	studied methods.
	\end{itemize}
    
    In order to keep the presentation simple, all the aforementioned
    results are established in the case where the inliers are
    drawn from the Gaussian distribution. We then show that the
    extension to a sub-Gaussian distribution can be carried out
    along the same lines. Furthermore, we prove that using Lepski's
    method, one can get rid of the knowledge of the contamination
    rate. More precisely, we establish that the rate $\sqrt{p/n} + 
    \varepsilon\sqrt{\log(1/\varepsilon)}$ can be achieved without
    any information on $\varepsilon$ other than $\varepsilon < 
    (5-\sqrt{5})/10\approx 0.28$. Finally, we prove that the same
    order of magnitude of the estimation error is achieved when the 
    covariance matrix $\bSigma$ is unknown but isotropic (\textit{i.e.}, 
    proportional to the identity matrix). When the covariance matrix
    is an arbitrary unknown matrix with bounded operator norm, our 
    estimator has an error of order $\sqrt{p/n} + \sqrt{\varepsilon}$, 
    which is the best known rate of estimation by a computationally 
    tractable procedure in the case of unknown covariance matrices.

    The rest of this paper is organized as follows. We complete 
    this introduction by presenting the notation used throughout the 
    paper. \Cref{sec:setting} describes the problem setting and 
    provides the definitions of the properties of robust estimators 
    such as rate optimality or breakdown point. The iteratively 
    reweighted mean estimator is introduced in \Cref{sec:method}. 
    This section also contains the main facts characterizing the 
    iteratively reweighted mean estimator along with their 
    high-level proofs. A detailed discussion of relation to prior 
    work is included in \Cref{sec:discuss}. \Cref{sec:formal}
    is devoted to a formal statement of the main building blocks 
    of the proofs. Extensions to the cases of sub-Gaussian 
    distributions, unknown $\varepsilon$ and $\bSigma$ are 
    examined in \Cref{sec:extensions}. Some empirical results 
    illustrating our theoretical claims are reported in
    \Cref{sec:empiric}. Postponed proofs are gathered in  
    \Cref{sec:proofs} and in the appendix.
    
    For any vector $\bv$, we use the norm notations 
    $\|\bv\|_2$ for the standard Euclidean norm, $\|\bv\|_1$ for 
    the sum of absolute values of entries and $\|\bv\|_\infty$ 
    for the largest in absolute value entry of $\bv$. The tensor 
    product of $\bv$ by itself is denoted by $\bv^{\otimes 2} = 
    \bv\bv^\top$. We denote by 
    $\bDelta^{n-1}$ and by $\mathbb S^{n-1}$, respectively, the 
    probability simplex and the unit sphere in $\mathbb R^n$. 
    For any symmetric matrix $\bfM$, $\lambda_{\max}(\bfM)$ is 
    the largest eigenvalue of $\bfM$, while $\lambda_{\max,+}(\bfM)$ 
    is its positive part. The operator norm of $\bfM$ is denoted 
    by $\|\bfM\|_{\textup{op}}$. We will often use the effective 
    rank $\mathbf r_{\bfM}$ defined as $\tr(\bfM)/\|\bfM\|_{
    \textup{op}}$, where $\tr(\bfM)$ is the trace of matrix $\bfM$. 
    {For symmetric matrices $\bfA$ and $\bfB$ of the same size 
    we write $\bfA \succeq \bfB$, if the matrix $\bfA - \bfB$ 
    is positive semidefinite.} 
    For a rectangular $p\times n$ matrix $\bfA$, we let 
    $s_{\min}(\bfA)$ and $s_{\max}(\bfA)$ be the smallest and the 
    largest singular values of $\bfA$ defined respectively as 
    $s_{\min}(\bfA) = \inf_{\bv\in\mathbb S^{n-1}} \|\bfA\bv\|_2$ 
    and $s_{\max}(\bfA) = \sup_{\bv\in\mathbb S^{n-1}} \|\bfA\bv\|_2$. 
    The set of all $p\times p$ positive semidefinite matrices is 
    denoted by $\mathcal S_{+}^p$. 
    
	\section{Desirable properties of a robust estimator}\label{sec:setting}

	We consider the setting in which the sample points are corrupted versions
	of independent and identically distributed random vectors drawn from
	a $p$-variate Gaussian distribution with mean $\bmu^*$ and covariance 
	matrix $\bSigma$. In what follows, we will assume that the rate of 
	contamination and the covariance matrix are known and, therefore, can
	be used for constructing an estimator of $\bmu^*$. We present in 
	\Cref{sec:extensions} some additional results which are valid under 
	relaxations of this assumption.

    \def\bfPn{\mathbf P\!_n}
	\begin{definition}\label{def:1}
	We say that the distribution $\bfPn$ of data $\bX_1,\ldots,\bX_n$ 
	is Gaussian with adversarial contamination, denoted by 
	$\bfPn\in \textup{GAC}(\bmu^*,\bSigma,\varepsilon)$ with 
	$\varepsilon\in(0,1/2)$ and $\bSigma\succeq 0$,
	if there is a set of $n$ independent and identically distributed random 
	vectors $\bY_1,\ldots,\bY_n$ drawn from $\mathcal N_p(\bmu^*,\bSigma)$ 
	satisfying
	\begin{align}
	\big|\{i:\bX_i\neq \bY_i \}\big|\le \varepsilon n.
	\end{align}
	\end{definition}

	In what follows, the sample points $\bX_i$ with indices in the set 
	$\mathcal O = \{i:\bX_i\neq \bY_i\}$ are called outliers, while all the
	other sample points are called inliers. We define $\mathcal I = \{1,
	\ldots,n\}\setminus\mathcal O$, the set of inliers. Assumption GAC allows 
	both the
	set of outliers $\mathcal O$ and the outliers themselves to be random and 
	to depend arbitrarily on the values of $\bY_1,\ldots,\bY_n$. In 
	particular, we can consider a game in which an adversary has access to
	the clean observations $\bY_1,\ldots,\bY_n$ and is allowed to modify 
	an $\varepsilon$ fraction of them before unveiling to the Statistician.  
	The Statistician aims at estimating $\bmu^*$ as accurately as 
	possible, the accuracy being measured by the expected estimation error:
	\begin{align}
		R_{\bfPn}(\hat\bmu_n, \bmu^*) = \|\hat\bmu_n - 
		\bmu^*\|_{\mathbb L_2({\bfPn})} = \bigg(\sum_{j=1}^p \bfE_{\bfPn}
		[(\hat{\bmu}_n-\bmu^*)_j^2]\bigg)^{1/2}.
	\end{align}
	Thus, the goal of the adversary is to apply a contamination that makes 
	the task of estimation the hardest possible. The goal of the Statistician
	is to find an estimator $\hat{\bmu}_n$ that minimizes the worst-case risk
	\begin{align}
		R_{\max}(\hat\bmu_n,\bSigma,\varepsilon) = \sup_{\bmu^*\in\RR^p}
		\sup_{\bfPn\in\textup{GAC}(\bmu^*,\bSigma,\varepsilon)} 
		R_{\bfPn}(\hat\bmu_n, \bmu^*).
	\end{align}
	Let $\rSigma = \tr(\bSigma)/\|\bSigma\|_{\textup{op}}$ be the effective
	rank of $\bSigma$. The theory developed by \citep{chen2016,chen2018}, 
	in conjunction with \citep[Prop.~1]{bateni2020minimax}, 
	implies that 
	\begin{align}\label{lower}
		\inf_{\hat{\bmu}_n} R_{\max}(\hat\bmu_n,\bSigma,\varepsilon) \ge c
		\|\bSigma\|^{1/2}_{\textup{op}} \bigg(\sqrt{\frac{\rSigma}{n}} + 
		\varepsilon\bigg)	
	\end{align}
	for some constant $c>0$, where the infimum is over all measurable functions
	of $(\bX_1,\ldots,\bX_n)$. 
	A detailed proof of this claim is presented in \Cref{App:D}. This lower bound naturally leads to the following definition.

	\begin{definition}\label{def:2}
	We say that the estimator $\hat{\bmu}_n$ is minimax rate optimal (in 
	expectation), if there are universal constants $c_1$, $c_2$ and $C$ 
	such that
	\begin{align}
		R_{\max}(\hat\bmu_n,\bSigma,\varepsilon) \le C\|\bSigma\|^{1/2}_{\textup{op}} 
		\bigg(\sqrt{\frac{\rSigma}{n}} + \varepsilon\bigg)	
	\end{align}
	for every $(n,\bSigma,\varepsilon)$ satisfying $\rSigma\le c_1 n$ and 
	$\varepsilon\le c_2$. 
	\end{definition}

	The iteratively reweighted mean estimator, introduced in the 
	next section, is not minimax rate optimal but is very close 
	to being so. Indeed, we will prove that it is minimax rate 
	optimal up to a  $\sqrt{\log(1/\varepsilon)}$ factor in the second term ($\varepsilon$ is replaced by $\varepsilon\sqrt{\log(1/\varepsilon)}$). It should 
	be stressed here that, to the best of our knowledge, none of the 
	results on robust estimation of the Gaussian mean provides 
	rate-optimality in expectation in the high-dimensional setting. 
	Indeed, all those results provide risk bounds that hold with 
	high probability, and either (a) do not say anything about the 
	magnitude of the error on a set of small but strictly positive 
	probability or (b) use the confidence parameter in the 
	construction of the estimator. Both of these shortcomings prevent 
	from extracting bounds for expected loss from high-probability 
	bounds. This being said, it should be noted that most prior work 
	has focused on the Huber contamination, in which case no meaningful 
	(\textit{i.e.}, different from $+\infty$, see Section~2.6 
	in \citep{bateni2020minimax}) upper bound on the minimax risk 
	in expectation can be obtained. 
	
	\begin{definition}\label{def:3}
	We say that $\hat{\bmu}_n$ is an asymptotically efficient 
	estimator of $\bmu^*$, if when $\varepsilon = \varepsilon_n$ 
	tends to zero sufficiently fast, as $n$ tends to infinity, 
	we have
	\begin{align}
		R_{\max}(\hat\bmu_n,\bSigma,\varepsilon) \le 
		\|\bSigma\|^{1/2}_{\textup{op}} 
		\sqrt{\frac{\rSigma}{n}}\,\big( 1 + o_n(1)\big).	
	\end{align}	
	\end{definition}
	
	If we compare inequalities in \Cref{def:2} and \Cref{def:3}, we can see that the constant $C$ present in the former disappeared in the latter. This reflects the fact that asymptotic efficiency implies not only rate-optimality, but also the optimality of the constant factor. We recall here that in the outlier-free situation, the sample mean is asymptotically efficient and its worst-case risk is equal to  $\|\bSigma\|^{1/2}_{\textup{op}} 
	\sqrt{{\rSigma}/{n}}$.
	
	One can infer from \eqref{lower} that a necessary condition 
	for the existence of asymptotically efficient estimator is 
	$\varepsilon_n^2 = o_n(\rSigma/n)$. We show in the next 
	section that this condition is almost sufficient, by proving 
	that the iteratively reweighted mean estimator is asymptotically
	efficient provided that	$\varepsilon_n^2\log(1/\varepsilon_n) 
	= o_n(\rSigma/n)$.
	
	The last notion that we introduce in this section is the 
	breakdown point, the term being coined by \cite{Hampel}, 
	see also \citep{DonHuber}. Roughly speaking, the breakdown 
	point of a given estimator is the largest proportion of 
	outliers that the estimator can support without becoming 
	infinitely large. The definition we provide below slightly differs from the original one. This difference is motivated by our goal to focus on 
	studying the expected risk of robust estimators.

	\begin{definition}\label{def:4}
		We say that $\varepsilon_n^*\in[0,1/2]$ is the (finite-sample) breakdown 
		point of the estimator $\hat\bmu_n$, if 	
		\begin{align}
			R_{\max}(\hat\bmu_n,\bSigma,\varepsilon) < +\infty,\qquad \forall
			\varepsilon < \varepsilon^*_n			
		\end{align}	
		and $R_{\max}(\hat\bmu_n,\bSigma,\varepsilon) = +\infty$, for every  
		$\varepsilon>\varepsilon^*_n$.
	\end{definition}
	
	One can check that the breakdown points of the componentwise median 
	and the geometric median (see the definition of $\GMmu$ in \eqref{GM:def} 
	below) equal $1/2$. 
	Unfortunately, the minimax rate of these methods is strongly suboptimal,
	see \citep[Prop.\ 2.1]{chen2018} and \citep[Prop.\ 2.1]{LaiRV16}. 
	Among all rate-optimal (up to a polylogarithmic factor) robust estimators, 
	Tukey's median is\footnote{Recent results in \citep{zhu2020tukey} suggest that an estimator based on the TV-projection may achieve the optimal breakdown point of $1/2$.} the one with highest known breakdown point equal to $1/3$ \citep{DonGasko}. It should be
	noted that this paper deal with the original 
	definition of the breakdown point which, as already 
	mentioned, is slightly different from that of \Cref{def:4}. 
	
	The notion of breakdown point given in \Cref{def:4}, well adapted to estimators that do not
	rely on the knowledge of $\varepsilon$, becomes less relevant in the
	context of known $\varepsilon$. Indeed, if a given estimator
	$\hat\bmu_n(\varepsilon)$ is proved to have a breakdown point equal to 
	$0.1$, one can consider instead the estimator $\tilde\bmu_n(\varepsilon) 
	= \hat\bmu_n(\varepsilon)\mathds 1(\varepsilon< 0.1) + \GMmu \mathds 1(
	\varepsilon\ge 0.1)$, which	will have a breakdown point equal to $0.5$. 
	For this reason, it appears more appealing to consider a different notion
	that we call rate-breakdown point, and which is of the same flavor
	as the $\delta$-breakdown point defined in \citep{chen2016}. 
	
	\begin{definition}
    	We say that $\varepsilon_{r}^*\in[0,1/2]$ is the  
    	$r(n,\bSigma,\varepsilon)$-breakdown point of the estimator 
    	$\hat\bmu_n$ for a given function $r:\mathbb N\times\mathcal S_{+}^p
    	\times [0,1/2)$, if for every $\varepsilon<\varepsilon_{r}^*$, 
		\begin{align}
			\sup_{n,p} 
			\frac{R_{\max}(\hat\bmu_n(\varepsilon),\bSigma,\varepsilon)}{r(n,
			\bSigma,\varepsilon)} < +\infty,
	\end{align}	
	and $\varepsilon^*_r$ is the largest value satisfying this property.
	\end{definition}
	
	In the context of Gaussian mean estimation, if the previous definition is 
	applied with $r(n,\bSigma,\varepsilon) = \|\bSigma\|_{\textup{op}} 
	\big(\sqrt{\rSigma/n} + \varepsilon)$, we call the corresponding value 
	the minimax-rate-breakdown point. Similarly, if  $r(n,\bSigma,\varepsilon) =
	\|\bSigma\|_{\textup{op}}\big(\sqrt{\rSigma/n} + \varepsilon
	\sqrt{\log(1/\varepsilon)})$, we call the corresponding value the
	nearly-minimax-rate-breakdown point. It should be mentioned here that
	the extra $\sqrt{\log(1/\varepsilon)}$ factor cannot be avoided by any
	Statistical Query (SQ) polynomial-time algorithm, as shown by the 
	SQ lower bound established in \citep{DiakonikolasKS17}.  
	
	\section{Iterative reweighting approach}\label{sec:method}

	In this section, we define the iterative reweighting estimator that will
	be later proved to enjoy all the desirable properties. To this end, we set
	\begin{align}\label{notations1}
			\bar{\bs X}_{\bw} = \sum_{i=1}^n w_i \bs X_i,\qquad
			G(\bw,\bmu) = \lambda_{\max,+}\bigg(\sum_{i=1}^n  w_i(\bs X_i-\bmu)
			^{\otimes 2} - \bSigma\bigg)
	\end{align}
	for any pair of vectors  $\bw\in[0,1]^n$ and $\bmu\in\RR^p$. The main idea 
	of the proposed methods is to find a weight vector $\hat{\bw}_n$ belonging 
	to the probability simplex
	\begin{align}
		\bDelta^{n-1} = \Big\{\bw\in [0,1]^n : w_1+\ldots +w_n = 1\Big\}
	\end{align}
	that mimics the ideal weight vector $\bw^*$ defined by 
	$w^*_j = \mathds 1(j\in\mathcal I)/|\mathcal I|$, so that 
	the weighted average $\bar{\bX}_{\hat{\bw}_n}$ is nearly as 
	close to $\bmu^*$ as the average of the inliers. Note that, for any 
	weight vector $\bw\in \bDelta^{n-1}$ and any vector $\bmu\in \mathbb{R}^p$,
	we have
	\begin{align}
	    \sum_{i=1}^n  w_i(\bs X_i-\bmu)^{\otimes 2} = \sum_{i=1}^n  w_i(\bs X_i-
	    \bar\bX_{\bw})^{\otimes 2} + (\bar\bX_{\bw} - \bmu)^{\otimes 2}
	    \succeq \sum_{i=1}^n  w_i(\bs X_i-\bar\bX_{\bw})^{\otimes 2}.
	\end{align}
    This readily yields that $G(\bw,\bmu)\ge G(\bw,\bar\bX_{\bw})$ 
    , and, therefore 
    \begin{align}\label{eq:minG}
        G(\bw,\bar\bX_{\bw}) = \min_{\bmu\in\RR^n} G(\bw,\bmu),\qquad \forall
        \bw\in\bDelta^{n-1}.
    \end{align}%
\begin{figure*}
	\begin{center}
	\begin{minipage}{0.8\linewidth}
			\begin{alg}{Iteratively reweighted mean estimator (known 
			$\bs\varepsilon$ and $\bSigma$)}{a1}		
				\begin{algorithmic}
				\small
				\STATE {\bfseries Input:} data $\bX_1,\ldots,\bX_n\in\RR^p$,  
				contamination rate $\varepsilon$ and $\bSigma$
				\STATE {\bfseries Output:} parameter estimate $\hat\bmu_n^{\textup{IR}}$
				\STATE {\bfseries Initialize:} compute $\hat\bmu^0$ as a minimizer of 
				$\sum_{i=1}^n \|\bX_i-\bmu\|_2$\\[7pt]
				\STATE  Set $K = 0\vee\Big\lceil \frac{\log(4\rSigma) - 2\log(\varepsilon({1-2\varepsilon}))}
				{2\log(1-2\varepsilon) - \log \varepsilon -\log(1-\varepsilon)}
				\Big\rceil$.\\[7pt]
				\STATE {{\bf For} $k=1:K$}
				\STATE \qquad {Compute current weights:}\\
				\qquad\quad $\ds	\bw \in \mathop{\textup{arg\,min}}_{(n-n\varepsilon)\|\bw\|_\infty\le 1}
					\lambda_{\max}\bigg(\sum_{i=1}^n  w_i (\bs X_i-\hat\bmu^{k-1})
					^{\otimes 2}- \bSigma\bigg)\vee 0.		
				$
				\STATE \qquad{Update the estimator: $\hat{\bmu}^k = \sum_{i=1}^n w_i\bX_i$}.
				\STATE {\bfseries EndFor}
				\STATE {{\bf Return} $\hat\bmu^{K}$}.
				\end{algorithmic}		
			\end{alg}
		\end{minipage}
	\end{center}
\end{figure*}%
	The precise definition of the proposed estimator is as follows. 
	We start from an initial estimator $\hat\bmu^0$ of $\bmu^*$. To 
	give a concrete example, and also in order to guarantee 
	equivariance by similarity transformations, 
	we assume that $\hat\bmu^0$ is the geometric median:
	\begin{align}\label{GM:def}
	 \hat\bmu^0 = 
	\GMmu \in\textup{arg}\min_{\bmu\in\RR^p} \sum_{i=1}^n\|\bX_i-\bmu\|_2.
	\end{align}   

	\begin{definition}\label{IR:def}
	We call iteratively reweighted mean estimator, denoted by 
	$\hat{\bmu}_n^{\textup{IR}}$, the $K$-th element of the sequence 
	$\{\hat\bmu^k; k=0,1,\ldots\}$ starting from $\hat\bmu^0$ in \eqref{GM:def}
	and defined by the recursion
	\begin{align}\label{iter:1}
		\hat\bw^k \in \textup{arg}\min_{(n-n\varepsilon)\|\bw\|_\infty\le 1}
		G(\bw,\hat\bmu^{k-1}),\qquad  \hat{\bmu}^k = \bar{\bX}_{\hat{\bw}^{k}},
	\end{align}
	where the minimum is over all weight vectors $\bw\in\bDelta^{n-1}$
	satisfying $\max_j w_j\le 1/(n-n\varepsilon)$ and the number of iteration 
	is
	\begin{align}\label{K}
		K = 0\bigvee  \bigg\lceil \frac{\log(4\rSigma) - 
		2\log(\varepsilon({1-2\varepsilon}))}{2\log(1-2\varepsilon) - 
		\log \varepsilon -\log(1-\varepsilon)}\bigg\rceil. 
	\end{align}
	\end{definition}

    The idea of computing a weighted mean, with weights measuring the 
    outlyingness of the observations goes back at least to 
    \citep{Donoho1,Stahel}. Perhaps the first idea similar to that of 
    minimizing the largest eigenvalue of the covariance matrix 
    was that of minimizing the determinant of the sample
    covariance matrix over all subsamples of a given cardinality 
    \citep{Rousseeuw84,Rousseeuw85}. It was also observed in \citep{Lopuha} 
    that one can improve the estimator by iteratively updating the weights. 
    An overview of these results can be found in \citep{Rousseeuw}. 
    
    \begin{figure}
        \centering
        \includegraphics[origin=c, width=0.6\textwidth]{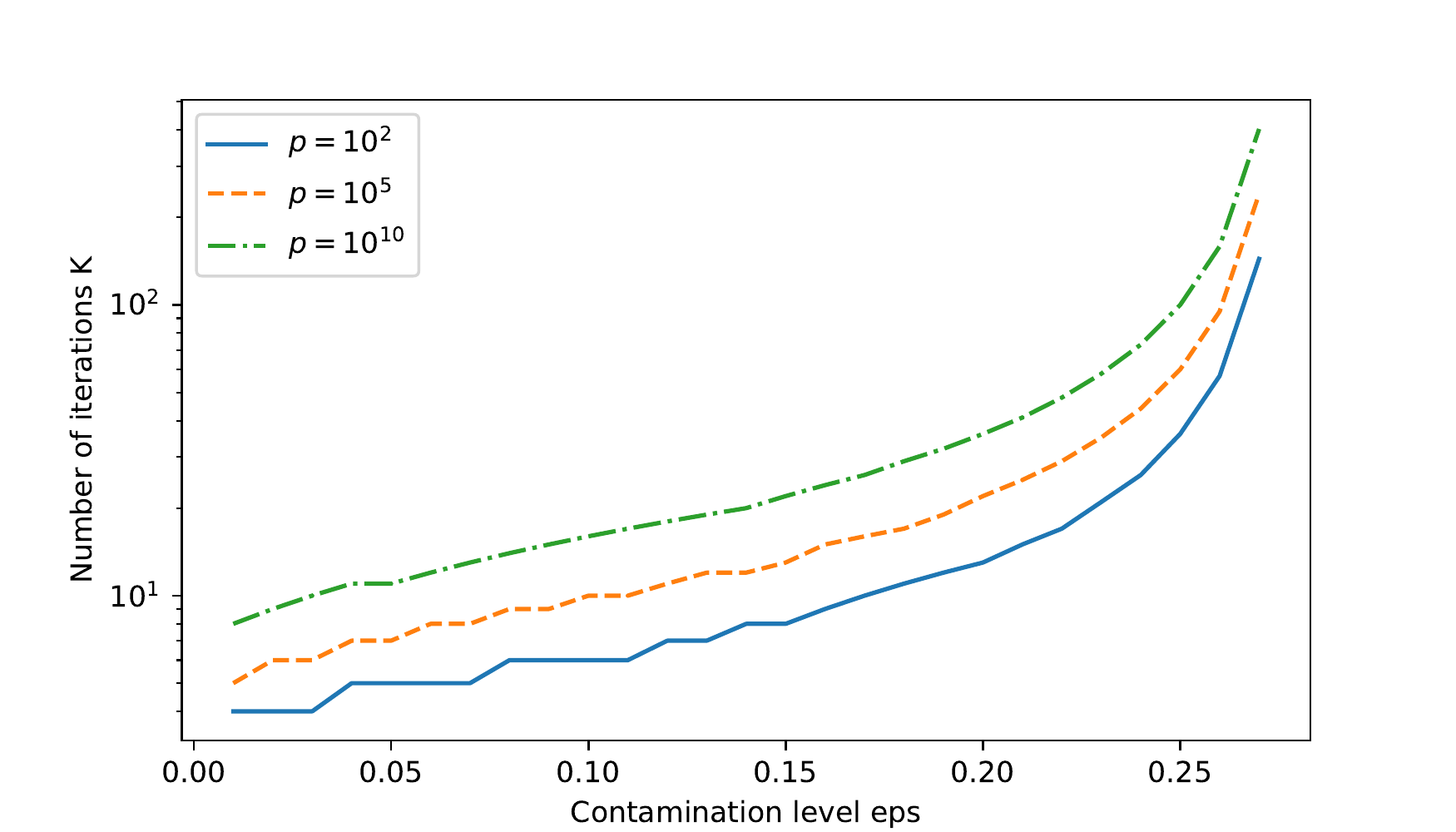}
    
    \caption{The behavior of the number of iterations $K = K_\varepsilon$, given by \eqref{K}, as a function of the contamination rate $\varepsilon$ for different values of the dimension $p = \rSigma$.}
    \label{fig:k-eps}
    \end{figure}

	Note that the value of $K$ provided above is tailored to 
	the case where the initial estimator is the geometric 
	median. Clearly, $K$ depends only logarithmically on the 
	dimension and $K = K_\varepsilon$ tends to 2 when $\varepsilon$ 
	goes to zero, see \Cref{fig:k-eps}. Note also that the choice of $K$ in \eqref{K} is 
	derived from the condition $\big(\sqrt{\varepsilon(1-\varepsilon)
	}/(1-2\varepsilon	)\big)^K\|\hat\bmu^0-\bmu^*\|_{\mathbf L_2}
	\le \varepsilon$, see \eqref{ineq:GMR} below. We can use any other 
	initial estimator of $\bmu^*$ instead of the geometric median, 
	provided that $K$ is large enough to satisfy the last inequality.
	
	We have to emphasize that $\hat{\bmu}_n^{
	\textup{IR}}$ relies on the knowledge of both $\varepsilon$ and 
	$\bSigma$ (the dependence on $\bSigma$ is through the effective
	rank, which coincides with the dimension for non degenerate covariance 
	matrices). Indeed, the number of iterations $K$ depends on both $\varepsilon$ and $\bSigma$. Additionally, $\bSigma$ is used in 
	the cost function $G(\bw, \bmu)$ and $\varepsilon$ is used for 
	specifying the set of feasible weights in optimization problem \eqref{iter:1}. We present some extensions to the case of unknown
	$\varepsilon$ and $\bSigma$ in \Cref{sec:extensions}. 
	
	The rest of this section is devoted to showing that the 
	iteratively reweighted estimator enjoys all the desirable 
	properties announced in the introduction. An estimator is called
	computationally tractable, if its computational complexity 
	is at most polynomial in $n$, $p$ and $1/\varepsilon$. 

	\begin{center}
	\begin{minipage}{0.90\linewidth}
			\begin{fact}{}{f1}
			The estimator $\hat{\bmu}_n^{\textup{IR}}$ is computationally tractable. 
			\end{fact}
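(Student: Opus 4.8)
The plan is to verify that each of the $K+1$ steps of the algorithm can be executed in time polynomial in $n$, $p$ and $1/\varepsilon$, and that $K$ itself is polynomially bounded. First I would dispose of the number of iterations: the expression for $K$ in \eqref{K} is $0\vee\lceil(\log(4\rSigma)-2\log(\varepsilon(1-2\varepsilon)))/(2\log(1-2\varepsilon)-\log\varepsilon-\log(1-\varepsilon))\rceil$, and since $\rSigma\le p$ and the denominator is bounded below by a positive quantity depending only on $\varepsilon$ (it is strictly positive for $\varepsilon<1/2$), we get $K = O(\log p + \log(1/\varepsilon))$, which is certainly polynomial; in fact it is only logarithmic in $p$.

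Next I would treat the initialization $\hat\bmu^0=\GMmu$, the geometric median, which is the minimizer of the convex function $\bmu\mapsto\sum_{i=1}^n\|\bX_i-\bmu\|_2$ on $\RR^p$. This is a smooth-plus-separable convex program (a second-order cone program) and can be solved to arbitrary accuracy $\delta$ in time polynomial in $n$, $p$ and $\log(1/\delta)$, e.g.\ via interior-point methods; one may also invoke the Weiszfeld iteration or the nearly-linear-time algorithm of Cohen et al. Either way this cost is polynomial. The key point I would want to make precise is that an $\delta$-approximate geometric median suffices: the contraction argument underlying the choice of $K$ (the bound $(\sqrt{\varepsilon(1-\varepsilon)}/(1-2\varepsilon))^K\|\hat\bmu^0-\bmu^*\|_{\mathbf L_2}\le\varepsilon$ alluded to after \eqref{K}) tolerates an additive error in $\hat\bmu^0$ of order $\varepsilon$, so polynomial precision is enough.

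The heart of the argument is that each inner step \eqref{iter:1} is a convex optimization problem solvable in polynomial time. For fixed $\hat\bmu^{k-1}$, the feasible set $\{\bw\in\bDelta^{n-1}:(n-n\varepsilon)\|\bw\|_\infty\le 1\}$ is a polytope described by $O(n)$ linear inequalities, and the objective $\bw\mapsto\lambda_{\max,+}(\sum_i w_i(\bX_i-\hat\bmu^{k-1})^{\otimes2}-\bSigma)$ is a convex function of $\bw$: it is the positive part of the pointwise supremum over $\bv\in\mathbb S^{p-1}$ of the linear maps $\bw\mapsto\sum_i w_i\langle\bX_i-\hat\bmu^{k-1},\bv\rangle^2-\bv^\top\bSigma\bv$. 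Hence the whole problem is a semidefinite program: minimize $t$ subject to $\bw$ in the polytope, $t\ge 0$, and $t\bfI_p\succeq\sum_i w_i(\bX_i-\hat\bmu^{k-1})^{\otimes2}-\bSigma$, with $O(n+p)$ variables and a single $p\times p$ LMI. By standard interior-point theory this is solvable to accuracy $\delta$ in $\mathrm{poly}(n,p,\log(1/\delta))$ time. Once $\hat\bw^k$ is computed, the update $\hat\bmu^k=\bar\bX_{\hat\bw^k}=\sum_i\hat w_i^k\bX_i$ costs $O(np)$. Multiplying the per-iteration cost by $K=O(\log p+\log(1/\varepsilon))$ gives a total running time polynomial in $n$, $p$ and $1/\varepsilon$, as required.

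The main obstacle, and the only genuinely nontrivial point, is the propagation of numerical error: since the SDPs are solved only approximately, one must check that an $\delta$-optimal weight vector still yields the guarantees used elsewhere in the paper, and that the required accuracy $\delta$ need only be taken polynomially small (say $\delta = \mathrm{poly}(\varepsilon/n)$), so that $\log(1/\delta)$ remains polynomial. I would handle this by noting that the subsequent analysis only uses the value $G(\hat\bw^k,\hat\bmu^{k-1})$ up to additive slack, and that the map $\bw\mapsto\bar\bX_{\bw}$ is $1$-Lipschitz in a suitable sense on the feasible polytope (because $\|\bX_i\|$ is bounded on the event that the analysis is carried out, or more robustly because only the inlier coordinates matter), so a polynomially small error in $\hat\bw^k$ translates into a polynomially small error in $\hat\bmu^k$ that is absorbed into the constants. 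Everything else is routine.
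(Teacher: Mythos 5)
Your proposal is correct and follows essentially the same route as the paper: the number of iterations $K$ is only logarithmic in $\rSigma\le p$, the geometric median initialization is a convex (second-order cone) problem, and each weight update is recast as the SDP ``minimize $t$ subject to $\bw\in\bDelta^{n-1}$, $\|\bw\|_\infty\le 1/(n(1-\varepsilon))$, $\sum_i w_i(\bX_i-\hat\bmu^{k-1})^{\otimes 2}\preceq\bSigma+t\bfI_p$,'' solvable in polynomial time by interior-point methods. Your additional care about approximate solutions matches the paper's own remark (end of \Cref{sec:discuss}) that solving \eqref{iter:1} up to an error of order $\sqrt{\rSigma/n}+\sqrt{\varepsilon}$ does not change the statistical guarantees.
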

		\end{minipage}
	\end{center}

	In order to check computational tractability, it suffices to prove that
	each iteration of the algorithm can be performed in polynomial time. Since
	the number of iterations depends logarithmically on $\rSigma\le p$, 
	this will suffice. 
	Note now that the optimization problem in \eqref{iter:1} is convex and 
	can be cast into a semi-definite program. Indeed, it is equivalent to 
	minimizing a real value $t$ over all the pairs $(t,\bw)$ satisfying the 
	constraints 
	\begin{align}
	t\ge 0,\quad \bw\in\bDelta^{n-1},\quad 
	\|\bw\|_\infty\le \frac1{n(1-\varepsilon)},\quad
	\sum_{i=1}^n  w_i(\bs X_i-\hat{\bmu}^{k-1})^{\otimes 2} 
	\preceq \bSigma + t\bfI_p.
	\end{align} 
	The first two constraints can be rewritten as a set of linear inequalities, 
	while the third constraint is a linear matrix inequality. Given the special
	form of the cost function and the constraints, it is possible to design
	specific optimization routines which will find an approximate solution to
	the problem in a faster way than the out-of-shelf SDP-solvers. However, 
	we will not pursue this line of research in this work.

	\begin{center}
		\begin{minipage}{0.90\linewidth}
			\begin{fact}{}{f2}
				The estimator $\hat{\bmu}_n^{\textup{IR}}$ is 
				translation, uniform scaling and orthogonal 
				transformation equivariant. 
			\end{fact}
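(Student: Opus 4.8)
The plan is to show that each step in the construction of $\hat{\bmu}_n^{\textup{IR}}$ transforms covariantly under a similarity transformation $\bX_i \mapsto \bfA\bX_i + \bv$ where $\bfA = \rho\bfU$ with $\rho > 0$ and $\bfU$ orthogonal, provided the covariance matrix used in the algorithm is simultaneously transformed as $\bSigma \mapsto \bfA\bSigma\bfA^\top = \rho^2\bfU\bSigma\bfU^\top$. (This is the natural notion of equivariance here, since $\bSigma$ is an input to the estimator; the effective rank $\rSigma$, and hence the iteration count $K$, is invariant because $\tr$ and $\|\cdot\|_{\textup{op}}$ both scale by $\rho^2$.) First I would check the base case: the geometric median $\GMmu$ in \eqref{GM:def} is similarity equivariant, because $\sum_i \|\bfA\bX_i + \bv - \bmu\|_2$ as a function of $\bmu$ is, after the substitution $\bmu = \bfA\bmu' + \bv$, equal to $\rho\sum_i \|\bX_i - \bmu'\|_2$; hence $\bmu'$ ranges over the same argmin set and $\hat\bmu^0 \mapsto \bfA\hat\bmu^0 + \bv$.

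Next I would run the induction on the iterations. Assuming $\hat\bmu^{k-1} \mapsto \bfA\hat\bmu^{k-1} + \bv$, observe that $\bX_i - \hat\bmu^{k-1} \mapsto \bfA(\bX_i - \hat\bmu^{k-1})$, so
\begin{align}
\sum_{i=1}^n w_i(\bX_i - \hat\bmu^{k-1})^{\otimes 2} - \bSigma \;\longmapsto\; \bfA\Big(\sum_{i=1}^n w_i(\bX_i - \hat\bmu^{k-1})^{\otimes 2} - \bSigma\Big)\bfA^\top.
\end{align}
The key linear-algebra fact is that for $\bfA = \rho\bfU$, one has $\lambda_{\max}(\bfA\bfM\bfA^\top) = \rho^2\lambda_{\max}(\bfM)$ for every symmetric $\bfM$, since $\bfA\bfM\bfA^\top = \rho^2\bfU\bfM\bfU^\top$ is $\rho^2$ times an orthogonal conjugate of $\bfM$. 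Consequently the objective $G(\bw,\hat\bmu^{k-1})$ (with the transformed $\bSigma$) is multiplied by the positive constant $\rho^2$, while the feasible set $\{\bw \in \bDelta^{n-1} : (n - n\varepsilon)\|\bw\|_\infty \le 1\}$ depends only on $n$ and $\varepsilon$ and is therefore unchanged. Hence the argmin $\hat\bw^k$ is unchanged, and $\hat\bmu^k = \sum_i \hat w^k_i \bX_i \mapsto \sum_i \hat w^k_i(\bfA\bX_i + \bv) = \bfA\hat\bmu^k + \bv$, using $\sum_i \hat w^k_i = 1$. This closes the induction, and since $K$ is invariant, $\hat\bmu^{\textup{IR}}_n \mapsto \bfA\hat\bmu^{\textup{IR}}_n + \bv$.

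There is no serious obstacle here; the only point requiring a little care is bookkeeping the role of $\bSigma$. Since $\bSigma$ is a \emph{given} input, translation equivariance ($\bfA = \bfI_p$) and orthogonal equivariance ($\rho = 1$) need no assumption on how $\bSigma$ is handled beyond $\bSigma \mapsto \bfU\bSigma\bfU^\top$ in the orthogonal case, and uniform-scaling equivariance is the statement that scaling the data by $\rho$ and the covariance input by $\rho^2$ scales the output by $\rho$ — which is the only sensible formulation, as the sample mean itself has exactly this behavior. I would also note in passing that the argmin sets in \eqref{GM:def} and \eqref{iter:1} need not be singletons; equivariance should be read as: for any measurable selection rule that is itself invariant under relabeling, the induced estimator is equivariant, or alternatively one fixes a canonical tie-breaking rule that commutes with orthogonal maps. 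In the write-up I would state the similarity transformation once, push it through the three bullet points above, and conclude.
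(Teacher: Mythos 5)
Your proposal is correct and follows essentially the same route as the paper: equivariance of the geometric median as the initial point, the scaling identity $G_X(\bw,\bmu)=\lambda^2 G_{X'}(\bw,\bs a+\lambda\bfU\bmu)$ (with $\bSigma$ replaced by $\lambda^2\bfU\bSigma\bfU^\top$) so that the weight argmin is unchanged, and propagation through the iterations. Your write-up merely makes explicit some details the paper leaves implicit (the induction, the invariance of $K$ via the effective rank, the unchanged feasible set, and the tie-breaking remark for non-unique argmins), all of which are consistent with the paper's argument.
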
	
		\end{minipage}
	\end{center}

	The equivariance mentioned in this statement should be 
	understood as follows. If we denote by $\hat{\bmu}_{n,X}^{
	\textup{IR}}$ the estimator computed for data $\bX_1,\ldots,
	\bX_n$, and by $\hat{\bmu}_{n,X'}^{\textup{IR}}$ the one 
	computed for data $\bX'_1,\ldots,\bX'_n$, with $\bX_i' = 
	\bs a + \lambda\bfU \bX_i$, where $\bs a\in\RR^p$, 
	$\lambda>0$ and $\bfU$ is a $p\times p$ orthogonal matrix, 
	then $\hat{\bmu}_{n,X'}^{\textup{IR}} = \bs a + \lambda\bfU
	\hat{\bmu}_{n,X}^{\textup{IR}}$. 
	To prove this property, we first note that 
	\begin{align}
	 \min_{\bmu\in\RR^p} \sum_{i=1}^n\|\bX'_i-\bmu\|_2 = 
	 \lambda\min_{\bmu\in\RR^p}\sum_{i=1}^n\|\bX_i-
	 \lambda^{-1}\bfU^\top(\bmu-\bs a)\|_2.
	\end{align}   
	This implies that $\hat{\bmu}_{n,X}^{\textup{GM}} = 
	\lambda^{-1}\bfU^\top(\hat{\bmu}_{n,X'}^{\textup{GM}} -\bs a)$, 
	which is equivalent to $\hat{\bmu}_{n,X'}^{\textup{GM}} 
	= \bs a + \lambda\bfU\hat{\bmu}_{n,X}^{\textup{GM}}$. 
	Therefore, the initial value of the recursion is equivariant. 
	If we add to this the fact that\footnote{We use here the 
	notation $G_X(\bw,\bmu)$ to make clear the dependence of 
	$G$ in \eqref{notations1} on $\bX_i$s. We also stress that 
	when the estimator is computed for the transformed data 
	$\bX_i'$, the matrix $\bSigma$ is naturally replaced by
	$\lambda^2\bfU\bSigma\bfU^\top$.} $G_X(\bw,\bmu) =\lambda^2 
	G_{X'}(\bw,\bs a + \lambda\bfU\bmu)$ for every $(\bw,\bmu)$, 
	we get the equivariance of $\hat{\bmu}_n^{\textup{IR}}$.

	\begin{center}
		\begin{minipage}{0.90\linewidth}
			\begin{fact}{}{f3}
					The breakdown point $\varepsilon^*_n$ and the nearly-minimax-rate-breakdown 
					point $\varepsilon^*_r$ of $\hat{\bmu}_n^{\textup{IR}}$ 
					satisfy, respectively, $\varepsilon^*_n=0.5$ and $\varepsilon^*_r\ge 
					(5-\sqrt{5})/10\approx 0.28$.
			\end{fact}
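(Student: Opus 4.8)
The plan is to treat the two assertions separately, since the breakdown-point claim only needs a crude finiteness estimate whereas the rate-breakdown claim needs the sharp geometric decay of the iteration. The arithmetic fact tying both to $(5-\sqrt5)/10$ is this: the per-iteration factor $\rho:=\sqrt{\varepsilon(1-\varepsilon)}/(1-2\varepsilon)$ satisfies $\rho<1\iff 5\varepsilon^2-5\varepsilon+1>0\iff\varepsilon<(5-\sqrt5)/10$ (the other root, $(5+\sqrt5)/10$, exceeding $1/2$), and the denominator of the expression for $K$ in \eqref{K} equals $2\log(1/\rho)$; in particular $\rho>1$ forces $K=0$, so that there $\hat\bmu_n^{\textup{IR}}=\GMmu$.

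\textbf{Breakdown point.} Since GAC is defined only for $\varepsilon\in(0,1/2)$, the second requirement in \Cref{def:4} is vacuous, and it suffices to show $R_{\max}(\hat\bmu_n^{\textup{IR}},\bSigma,\varepsilon)<+\infty$ for every $\varepsilon<1/2$. For $\varepsilon>(5-\sqrt5)/10$ this is immediate: $K=0$, so $\hat\bmu_n^{\textup{IR}}=\GMmu$, whose $\mathbf L_2$ error is finite by \eqref{ineq:GMR} (the same holds, by convention, at the isolated value $\varepsilon=(5-\sqrt5)/10$). For $\varepsilon<(5-\sqrt5)/10$ (so $K$ is finite) I would establish a deterministic one-step bound from two observations: (i) the ideal weight $\bw^*$, $w^*_i=\indic(i\in\mathcal I)/|\mathcal I|$, is admissible in \eqref{iter:1} because $|\mathcal I|\ge n(1-\varepsilon)$, so $G(\hat\bw^k,\hat\bmu^{k-1})\le G(\bw^*,\hat\bmu^{k-1})$, which after centring the inliers at $\bar\bY:=|\mathcal I|^{-1}\sum_{i\in\mathcal I}\bY_i$ is at most $A_n+\big(\|\bar\bY-\bmu^*\|_2+\|\hat\bmu^{k-1}-\bmu^*\|_2\big)^2$, where $A_n:=\big|\lambda_{\max}\big(|\mathcal I|^{-1}\sum_{i\in\mathcal I}(\bY_i-\bar\bY)^{\otimes 2}-\bSigma\big)\big|$ has finite moments since the $\bY_i$ are Gaussian; and (ii) since $\hat\bw^k\in\bDelta^{n-1}$, writing $\hat\bmu^k-\hat\bmu^{k-1}=\sum_i\hat w_i^k(\bX_i-\hat\bmu^{k-1})$ and using Cauchy--Schwarz together with Weyl's inequality gives $\|\hat\bmu^k-\hat\bmu^{k-1}\|_2\le\big(\|\bSigma\|_{\textup{op}}+G(\hat\bw^k,\hat\bmu^{k-1})\big)^{1/2}$. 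Combining these yields $\|\hat\bmu^k-\bmu^*\|_2\le 2\|\hat\bmu^{k-1}-\bmu^*\|_2+B_n$ with $B_n:=(\|\bSigma\|_{\textup{op}}+A_n)^{1/2}+\|\bar\bY-\bmu^*\|_2$ having finite $\mathbf L_2$ norm. Iterating the finitely many steps and taking $\mathbf L_2$ norms, $\|\hat\bmu^K-\bmu^*\|_{\mathbf L_2}\le 2^K\big(\|\hat\bmu^0-\bmu^*\|_{\mathbf L_2}+\|B_n\|_{\mathbf L_2}\big)<+\infty$, uniformly over $\bmu^*$ and over the contamination; hence $\varepsilon^*_n=1/2$.

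\textbf{Nearly-minimax-rate-breakdown point.} Fix $\varepsilon<(5-\sqrt5)/10$, so $\rho<1$. Here I would invoke the main building block of \Cref{sec:formal}, which furnishes the recursion $\|\hat\bmu^k-\bmu^*\|_{\mathbf L_2}\le\rho\,\|\hat\bmu^{k-1}-\bmu^*\|_{\mathbf L_2}+\bar r_n$ for all $k\ge1$, with $\bar r_n:=C\|\bSigma\|_{\textup{op}}^{1/2}\big(\sqrt{\rSigma/n}+\varepsilon\sqrt{\log(1/\varepsilon)}\big)$ (obtained there by combining a high-probability contraction with a truncation on the complementary event, the device that also delivers the expectation bounds announced in the introduction). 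Unrolling, $\|\hat\bmu^K-\bmu^*\|_{\mathbf L_2}\le\rho^K\|\hat\bmu^0-\bmu^*\|_{\mathbf L_2}+\bar r_n/(1-\rho)$, and the choice of $K$ in \eqref{K} combined with \eqref{ineq:GMR} is precisely what makes $\rho^K\|\hat\bmu^0-\bmu^*\|_{\mathbf L_2}\le\varepsilon\|\bSigma\|_{\textup{op}}^{1/2}$. Thus $\|\hat\bmu^K-\bmu^*\|_{\mathbf L_2}\le\frac{C+1}{1-\rho}\|\bSigma\|_{\textup{op}}^{1/2}\big(\sqrt{\rSigma/n}+\varepsilon\sqrt{\log(1/\varepsilon)}\big)$; since $(1-\rho)^{-1}$ is finite for each fixed $\varepsilon<(5-\sqrt5)/10$, and since by the scaling equivariance of \Cref{fact:f2} we may assume $\|\bSigma\|_{\textup{op}}=1$, this is a constant multiple of $r(n,\bSigma,\varepsilon)$ uniformly in $(n,p)$. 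Therefore $\sup_{n,p}R_{\max}(\hat\bmu_n^{\textup{IR}}(\varepsilon),\bSigma,\varepsilon)/r(n,\bSigma,\varepsilon)<+\infty$ for every such $\varepsilon$, i.e.\ $\varepsilon^*_r\ge(5-\sqrt5)/10$.

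\textbf{Main obstacle.} Everything above is bookkeeping once two ingredients from \Cref{sec:formal} are in hand: the geometric-median bound \eqref{ineq:GMR}, and the recursion with the \emph{sharp} factor $\rho$. The latter is the crux: one must show that, because $\bw^*$ is admissible and makes $G$ small, the minimiser $\hat\bw^k$ produces a weighted mean that contracts toward $\bmu^*$ by exactly the factor $\sqrt{\varepsilon(1-\varepsilon)}/(1-2\varepsilon)$. This requires simultaneously controlling the spread of the inliers under every admissible weight vector, the total mass the optimiser can place on the outliers, and how far that outlier mass can pull the weighted mean under the operator-norm constraint; bringing the coefficient of $\varepsilon$ down to the $\sqrt{\log(1/\varepsilon)}$ scale, rather than $1$ or $\sqrt{\varepsilon}$, is the delicate point, and is where the Gaussianity of the inliers enters. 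A secondary technicality, internal to that lemma, is the truncation converting a high-probability contraction into an $\mathbf L_2$ statement, which needs the failure probability of the concentration event to decay fast enough to absorb the $2^K$-type growth of the crude bound off that event.
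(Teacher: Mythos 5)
Your argument is correct in substance, and for the nearly-minimax-rate-breakdown claim it is essentially the paper's own route: the contraction recursion with factor $\alpha_\varepsilon=\sqrt{\varepsilon(1-\varepsilon)}/(1-2\varepsilon)$, unrolled over the $K$ of \eqref{K}, with the geometric-median bound of \Cref{lem:8} controlling the initialization and the choice of $K$ forcing $\alpha_\varepsilon^K\|\hat\bmu^0-\bmu^*\|_{\mathbb L_2}\lesssim\varepsilon$; the paper proves the second claim of \Cref{fact:f3} through exactly this computation (the one displayed for \Cref{fact:f4}). Where you genuinely diverge is the breakdown-point half: the paper treats $\varepsilon<(5-\sqrt{5})/10$ with the same sharp recursion (finiteness follows since $\alpha_\varepsilon<1$ and $\|\tilde\Xi\|_{\mathbb L_2}<\infty$ by \Cref{proposition2}) and treats $\varepsilon\ge(5-\sqrt{5})/10$ by noting $K=0$, so that the estimator is $\GMmu$; you keep the $K=0$ observation but replace the sharp recursion by the crude bound $\|\hat\bmu^k-\bmu^*\|_2\le 2\|\hat\bmu^{k-1}-\bmu^*\|_2+B_n$, obtained from admissibility of $\bw^*$, Weyl's inequality and Cauchy--Schwarz. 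This is a legitimate and somewhat more elementary route for pure finiteness, since it bypasses the refined moment bound of \Cref{proposition2}; its only loose end is that $\mathcal I$ (hence your $A_n$ and $\bar\bY$) may be chosen adversarially given the $\bY_i$, so these quantities should be bounded by suprema over all index sets of cardinality at least $n(1-\varepsilon)$, exactly as the paper does when defining $\Xi$ — a harmless fix since the supremum over finitely many Gaussian functionals still has finite moments.

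One mis-attribution is worth correcting. The recursion you invoke is not obtained in the paper by ``combining a high-probability contraction with a truncation on the complementary event'': \Cref{proposition1} is a purely deterministic inequality whose remainder $R(\bzeta,\bw,I)$ depends only on the inlier noise, \Cref{proposition2} bounds the $\mathbb L_2$ norm of its supremum $\Xi$ directly, and the expectation bound is then obtained pathwise, with no truncation step at all; the paper in fact stresses that avoiding the high-probability-plus-truncation device is what makes its in-expectation risk bounds possible. Since you, like the paper, use the recursion as a building block when proving \Cref{fact:f3}, this does not invalidate your argument, but the mechanism you sketch for that building block (and the worry about absorbing $2^K$-type growth off a concentration event) is not the one the paper uses.
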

		\end{minipage}
	\end{center}

	We prove later in this paper (see \eqref{eq:fund3}) that  if $\bX_1,\ldots,\bX_n$ satisfy 
	{GAC}$(\bmu^*,\bSigma,\varepsilon)$, there is a random variable $\Xi$
	depending only on $\bzeta_i = \bY_i-\bmu^*$, $i=1,\ldots,n$, such that
	\begin{align}\label{main:ineq}
			\|\bar{\bs X}_{\bw} - {\bmu}^*\|_2 
			&\le \frac{\sqrt{\varepsilon(1-\varepsilon)}}{1-2\varepsilon}\,
				G(\bw,\bmu)^{1/2} + \Xi,\quad\forall \bmu\in\RR^p,
	\end{align}
	for every $\bw\in\bDelta^{n-1}$ such that $n(1-\varepsilon)\|\bw\|_\infty 
	\le 1$. Inequality \eqref{main:ineq} is one of the main building blocks of 
	the proof of \Cref{fact:f3,fact:f4,fact:f5}. This inequality, as well as 
	inequalities \eqref{ineq:3} and \eqref{ineq:4} below will be formally stated 
	and proved in subsequent sections. To check \Cref{fact:f3}, we set 
	$\alpha_\varepsilon = {\sqrt{\varepsilon(1-\varepsilon)}}/{(1-2\varepsilon)}$ 
	and note that\footnote{See \Cref{ssec:6.1} for more detailed explanations.}
	\begin{align}
			\|\hat{\bmu}^k - {\bmu}^*\|_2 = \|\bar{\bX}_{\hat{\bw}^k} - {\bmu}^*\|_2
			&\le \alpha_{\varepsilon}\,
				G(\hat{\bw}^k,\hat\bmu^{k-1})^{1/2} + \Xi\\
			&\le \alpha_{\varepsilon}\,
				G(\bw^*,\hat\bmu^{k-1})^{1/2} + \Xi\\
			&\le \alpha_{\varepsilon}\,
				\big(G(\bw^*,\bar\bX_{\bw^*}) + \|\bar\bX_{\bw^*}-\hat\bmu^{k-1}\|_2^2
				\big)^{1/2} + \Xi\\
			&\le \alpha_{\varepsilon}\,\big(
				G(\bw^*,\bmu^*)  + \|\bar\bX_{\bw^*}-\hat\bmu^{k-1}\|_2^2\big)^{1/2} + \Xi\\
			&\le \alpha_{\varepsilon}\,\|\hat\bmu^{k-1}-\bmu^*\|_2 +
			\tilde\Xi,\label{eq:3}
	\end{align}
	where $\tilde\Xi = \alpha_{\varepsilon}\,\big(G(\bw^*,\bmu^*)^{1/2}  + 
	\|\bar\bzeta_{\bw^*}\|_2\big) + \Xi$. Unfolding this recursion, we get
	\footnote{Here and in the sequel $\alpha_{\varepsilon}^K$ stands for 
	$K$-th power of $\alpha_{\varepsilon}$.}
	\begin{align}\label{main:ineq2}
			\|\hat{\bmu}_n^{\textup{IR}} - {\bmu}^*\|_2  = \|\hat{\bmu}^K - {\bmu}^*\|_2 
			&\le \alpha_{\varepsilon}^K\,\|\hat\bmu^{0} - \bmu^*\|_2 +
			\frac{\tilde\Xi}{1-\alpha_\varepsilon}.
	\end{align}
	The geometric median $\hat\bmu^{0}  = \GMmu$ having a breakdown point equal to $1/2$,
	we infer from the last display that the error of the iteratively reweighted estimator
	remains bounded after altering $\varepsilon$-fraction of data points provided that
	$\alpha_\varepsilon <1 $. This implies that the breakdown point is at 
	least equal to the solution of the equation $\sqrt{\varepsilon(1-\varepsilon)}
	=1-2\varepsilon$, which yields $\varepsilon^*\ge (5-\sqrt{5})/10$. Moreover, if $\varepsilon\in[ (5-\sqrt{5})/10, 1/2]$, then the number
	of iterations $K$ equals zero and the iteratively reweighted mean coincides with
	the geometric median. Therefore, its 
	breakdown point is $1/2$.

	\begin{center}
		\begin{minipage}{0.90\linewidth}
			\begin{fact}{}{f4}			
				The estimator $\hat{\bmu}_n^{\textup{IR}}$ is 
				nearly minimax rate optimal, in the sense that its 
				worst-case risk is bounded by $C\|\bSigma\|^{1/2}
				_{\textup{op}}\big(\sqrt{{\rSigma}/{n}} + \varepsilon\sqrt{\log(1/\varepsilon)}\big)$, where
				$C$ is a universal constant.
			\end{fact}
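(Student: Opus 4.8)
The plan is to combine the master inequality \eqref{main:ineq2} with sharp control of the two random quantities it involves, namely $\|\hat\bmu^0-\bmu^*\|_2$ and $\tilde\Xi$, and then take expectations. First I would bound the deterministic prefactor: since $K$ in \eqref{K} was chosen precisely so that $\alpha_\varepsilon^K\|\hat\bmu^0-\bmu^*\|_{\mathbf L_2}\le\varepsilon\|\bSigma\|_{\textup{op}}^{1/2}$ (using the known $\mathbf L_2$-bound on the geometric median error, something like $\|\GMmu-\bmu^*\|_{\mathbf L_2}\lesssim \|\bSigma\|_{\textup{op}}^{1/2}(\sqrt{\rSigma/n}+\varepsilon)$ for $\varepsilon$ below the breakdown point), the first term on the right of \eqref{main:ineq2} is already of the target order. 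Since $\alpha_\varepsilon$ is bounded away from $1$ on the relevant range $\varepsilon\le c_2<(5-\sqrt5)/10$, the factor $1/(1-\alpha_\varepsilon)$ is an absolute constant, so it remains to control $\mathbf E[\tilde\Xi^2]^{1/2}$ where $\tilde\Xi=\alpha_\varepsilon(G(\bw^*,\bmu^*)^{1/2}+\|\bar\bzeta_{\bw^*}\|_2)+\Xi$.

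The three summands in $\tilde\Xi$ are handled separately, all in $\mathbf L_2$-norm over the inliers' randomness $\bzeta_i=\bY_i-\bmu^*$. For $\|\bar\bzeta_{\bw^*}\|_2$: this is the centered average of $|\mathcal I|\ge n(1-\varepsilon)$ i.i.d.\ $\mathcal N_p(0,\bSigma)$ vectors, so $\mathbf E\|\bar\bzeta_{\bw^*}\|_2^2=\tr(\bSigma)/|\mathcal I|\le \|\bSigma\|_{\textup{op}}\rSigma/(n(1-\varepsilon))$, which is $\lesssim \|\bSigma\|_{\textup{op}}\rSigma/n$. For $G(\bw^*,\bmu^*)^{1/2}=\lambda_{\max,+}\big(|\mathcal I|^{-1}\sum_{i\in\mathcal I}\bzeta_i^{\otimes2}-\bSigma\big)^{1/2}$: this is the one-sided operator-norm deviation of a sample covariance of $|\mathcal I|$ Gaussians from its mean, for which the standard effective-rank bound gives, in expectation, a term of order $\|\bSigma\|_{\textup{op}}(\sqrt{\rSigma/n}+\rSigma/n)$ for the eigenvalue, hence $\|\bSigma\|_{\textup{op}}^{1/2}(\rSigma/n)^{1/4}$-type control after the square root — but because $\alpha_\varepsilon\lesssim\sqrt\varepsilon$ multiplies it, $\alpha_\varepsilon G(\bw^*,\bmu^*)^{1/2}$ is of order $\sqrt\varepsilon\cdot\|\bSigma\|_{\textup{op}}^{1/2}(\rSigma/n)^{1/4}$, which by AM–GM is dominated by $\|\bSigma\|_{\textup{op}}^{1/2}(\sqrt{\rSigma/n}+\varepsilon)$. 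The genuinely delicate term is $\Xi$: according to the excerpt it is the quantity appearing in \eqref{main:ineq}, coming from the decomposition of $\bar\bX_\bw-\bmu^*$ into the part controlled by $G(\bw,\bmu)^{1/2}$ and a remainder; I expect $\Xi$ to be (bounded by) a supremum over the sphere of a linear form in the inlier noise restricted to the feasible weight set, something like $\sup_{\bv\in\mathbb S^{p-1}}\sup_{\bw}\sum_{i\in\mathcal I}w_i\langle\bv,\bzeta_i\rangle$ with $(n-n\varepsilon)\|\bw\|_\infty\le1$, whose $\mathbf L_2$-norm should be shown to be of order $\|\bSigma\|_{\textup{op}}^{1/2}\big(\sqrt{\rSigma/n}+\varepsilon\sqrt{\log(1/\varepsilon)}\big)$ — this is exactly where the extra $\sqrt{\log(1/\varepsilon)}$ enters, via a union bound / Dudley-type chaining over the $\binom{n}{\varepsilon n}$ choices of which $\varepsilon n$ weights are large.

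Having bounded each piece, I would assemble: $\mathbf E\|\hat\bmu_n^{\textup{IR}}-\bmu^*\|_2^2{}^{1/2}\le \alpha_\varepsilon^K\|\hat\bmu^0-\bmu^*\|_{\mathbf L_2}+(1-\alpha_\varepsilon)^{-1}\|\tilde\Xi\|_{\mathbf L_2}\le C\|\bSigma\|_{\textup{op}}^{1/2}\big(\sqrt{\rSigma/n}+\varepsilon\sqrt{\log(1/\varepsilon)}\big)$, uniformly in $\bmu^*$ and in $\bfPn\in\textup{GAC}(\bmu^*,\bSigma,\varepsilon)$, which is the claimed bound on $R_{\max}$. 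One technical point to be careful about is that \eqref{main:ineq} must hold simultaneously for all feasible $\bw$ (so that it applies to the data-dependent $\hat\bw^k$), which is why $\Xi$ must be defined as a supremum and why its tail must be controlled uniformly; this uniformity, and the resulting $\log(1/\varepsilon)$ factor, is the main obstacle. The rest — the geometric bound on the prefactor, the covariance-deviation estimate, the trivial variance of $\bar\bzeta_{\bw^*}$ — is routine and, as the excerpt indicates, is carried out in the formal statements of \eqref{main:ineq}, \eqref{ineq:3} and \eqref{ineq:4} in \Cref{sec:formal}.
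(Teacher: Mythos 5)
Your proposal follows the paper's own route for this fact: unfold the recursion \eqref{main:ineq2}, use the choice of $K$ to reduce the initialization term to order $\varepsilon$, and bound $\|\tilde\Xi\|_{\mathbb L_2}$ through the three pieces $\alpha_\varepsilon G(\bw^*,\bmu^*)^{1/2}$ (via the Koltchinskii--Lounici effective-rank bound \eqref{ineq:3}), $\alpha_\varepsilon\|\bar\bzeta_{\bw^*}\|_2$ (variance of the inlier average), and $\Xi$ (via the uniform bound \eqref{ineq:4}), with $1/(1-\alpha_\varepsilon)$ bounded for $\varepsilon$ bounded away from $(5-\sqrt5)/10$. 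Two caveats are worth flagging. First, your justification of the initialization step invokes a bound $\|\GMmu-\bmu^*\|_{\mathbb L_2}\lesssim\|\bSigma\|_{\textup{op}}^{1/2}(\sqrt{\rSigma/n}+\varepsilon)$ for the geometric median; no such bound holds (the geometric median is strongly rate-suboptimal in high dimensions), and none is needed: the paper proves only the crude estimate $\|\GMmu-\bmu^*\|_{\mathbb L_2}\le 2\sqrt{\rSigma}/(1-2\varepsilon)$ (\Cref{lem:8}), and the number of iterations \eqref{K} is calibrated precisely so that $2\alpha_\varepsilon^K\sqrt{\rSigma}\le\varepsilon(1-2\varepsilon)$, which is what makes the first term at most $\varepsilon$. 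Second, your guess that $\Xi$ is a supremum of a linear form $\sup_{\bv,\bw}\sum_{i\in\mathcal I}w_i\,\bv^\top\bzeta_i$ misses the dominant structure of the remainder in \Cref{proposition1}: besides the weighted average $\|\bar\bzeta_{\bw_{|I}}\|_2$, it contains $\sqrt{2\varepsilon_w}\,\lambda_{\max,+}^{1/2}\big(\sum_{i\in I}(\bw_{|I})_i(\bSigma-\bzeta_i\bzeta_i^\top)\big)$, a one-sided operator-norm deviation of the weighted inlier covariance, and controlling this quadratic term uniformly over the weight polytope is the main technical content of \Cref{proposition2} (through the extreme-point reduction of \Cref{lemma0} and singular-value bounds for Gaussian matrices). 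Your union-bound-over-$\binom{n}{\varepsilon n}$-subsets mechanism for the $\varepsilon\sqrt{\log(1/\varepsilon)}$ factor is indeed the right one, but applied only to the linear term it would not deliver \eqref{ineq:4}; since you defer that estimate to the formal statements, as the paper itself does in this passage, the gap lies in your sketch of \Cref{proposition2} rather than in the assembly of the fact.
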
	
		\end{minipage}
	\end{center}

	Without loss of generality, we assume that $\|\bSigma\|_{\textup{op}}=1$
	so that $\rSigma = \tr(\bSigma)$. We can always reduce the initial problem to this case
	by considering scaled data points $\bX_i/\|\bSigma\|_{\textup{op}}^{1/2}$ 
	instead of $\bX_i$. Combining \eqref{main:ineq2} and the triangle inequality, 
	we get
	\begin{align}\label{ineq:GMR}
			\|\hat{\bmu}_n^{\textup{IR}} - {\bmu}^*\|_{\mathbb L_2}  		
			&\le \alpha_{\varepsilon}^K\,\|\GMmu - \bmu^*\|_{\mathbb L_2} +
			\frac{\|\tilde\Xi\|_{\mathbb L_2}}{1-\alpha_\varepsilon}.
	\end{align}
	It is not hard to check that 
	$\|\GMmu - \bmu^*\|_{\mathbb L_2}\le 2\sqrt{\rSigma}/(1-2\varepsilon)$, 
	see \Cref{lem:8} below. Furthermore, the choice of $K$ in \eqref{K} entails $2\alpha_\varepsilon^K\sqrt{\rSigma} \le \varepsilon(1-2\varepsilon)$. 
	This implies that
	\begin{align}
			\|\hat{\bmu}_n^{\textup{IR}} - {\bmu}^*\|_{\mathbb L_2}  		
			&\le  \varepsilon + \frac{\|\tilde\Xi\|_{\mathbb L_2}}{1 - 
			\alpha_\varepsilon}.
	\end{align}
	The last two building blocks of the proof are the following\footnote{Inequality \eqref{ineq:3} is 
	\citep[Th. 4]{KoltchLounici}, while \eqref{ineq:4} is the claim of \Cref{proposition2} below.} inequalities: 
	\begin{align}
		&\bfE[G(\bw^*,\bmu^*)] \le C\big(1 +
		\sqrt{\rSigma/n}\big)\sqrt{\rSigma/n},\label{ineq:3}\\
		&\|\Xi\|_{\mathbb L_2} \le \sqrt{\rSigma/n}(1+ C\sqrt{\varepsilon}) +
		C\sqrt{\varepsilon}(\rSigma/n)^{1/4} + 
		C\varepsilon\sqrt{\log(1/\varepsilon)},\label{ineq:4}
	\end{align}
	where $C>0$ is a universal constant. In what follows, the value of $C$
	may change from one line to the other. We have
	\begin{align}
		\|\tilde\Xi\|_{\mathbb L_2} &\le \alpha_{\varepsilon}\big(\|G(\bw^*,\bmu^*)^{1/2}
		\|_{\mathbb L_2}  + \|\bar\bzeta_{\bw^*}\|_{\mathbb L_2}\big) + \|\Xi\|_{\mathbb L_2}\\
		&\le  C\sqrt{\varepsilon}\,\big(\bfE^{1/2}[G(\bw^*,\bmu^*)]  + 
		\sqrt{\rSigma/n}\big) + \|\Xi\|_{\mathbb L_2}^2\\
		&\le  C\varepsilon\,\big((\rSigma/n)^{1/4} + \sqrt{\rSigma/n} \big) + 
		\|\Xi\|_{\mathbb L_2}\\
		&\le \sqrt{\rSigma/n}(1+ C\sqrt{\varepsilon}) +
		C\sqrt{\varepsilon}(\rSigma/n)^{1/4} + C\varepsilon\sqrt{\log(1/\varepsilon)}\\
		&\le C\sqrt{\rSigma/n} + C\varepsilon\sqrt{\log(1/\varepsilon)}.\label{ineq:5}
	\end{align}
	Returning to \eqref{main:ineq2} and combining it with \eqref{ineq:5}, we 
	get the claim of \Cref{fact:f4} for every $\varepsilon\le \varepsilon_0$, 
	where $\varepsilon_0$ is any positive number strictly smaller than 
	$(5-\sqrt{5})/10$. This also proves the second claim of \Cref{fact:f3}.

	\begin{center}
	\begin{minipage}{0.90\linewidth}
	\begin{fact}{}{f5}
		In the setting  $\varepsilon  = \varepsilon_n\to 0$ so  that 
		$\varepsilon^2\log(1/\varepsilon) = o_n(\rSigma/n)$ when $n\to\infty$, 
		the estimator $\hat{\bmu}_n^{\textup{IR}}$ is asymptotically efficient.
	\end{fact}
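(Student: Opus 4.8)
The plan is to revisit the chain of inequalities already established in the proof of \Cref{fact:f4} and to track the constants as $\varepsilon = \varepsilon_n \to 0$. As before, normalize so that $\|\bSigma\|_{\textup{op}} = 1$, hence $\rSigma = \tr(\bSigma)$; the general case follows by rescaling the data and multiplying the final bound by $\|\bSigma\|_{\textup{op}}^{1/2}$, noting that $\rSigma$ is scale-invariant. Since $\varepsilon_n \to 0$, for $n$ large enough we have $\varepsilon_n < \varepsilon_0$, so all the bounds derived in \Cref{fact:f4} apply for such $n$ (the $o_n(1)$ terms only constrain large $n$, so the behaviour at small $n$ is irrelevant). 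The starting point is \eqref{ineq:GMR}, combined with $\|\GMmu - \bmu^*\|_{\mathbb L_2}\le 2\sqrt{\rSigma}/(1-2\varepsilon)$ from \Cref{lem:8} and the defining property $2\alpha_\varepsilon^K\sqrt{\rSigma} \le \varepsilon(1-2\varepsilon)$ of $K$, which together give
\begin{align}
\|\hat\bmu_n^{\textup{IR}} - \bmu^*\|_{\mathbb L_2} \le \varepsilon + \frac{\|\tilde\Xi\|_{\mathbb L_2}}{1 - \alpha_\varepsilon}.
\end{align}

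Next I would invoke the intermediate bound on $\|\tilde\Xi\|_{\mathbb L_2}$ obtained in the chain of inequalities leading to \eqref{ineq:5} (using \eqref{ineq:3} and \eqref{ineq:4}), namely
\begin{align}
\|\tilde\Xi\|_{\mathbb L_2} \le \sqrt{\rSigma/n}\,(1 + C\sqrt\varepsilon) + C\sqrt\varepsilon\,(\rSigma/n)^{1/4} + C\varepsilon\sqrt{\log(1/\varepsilon)}.
\end{align}
The crucial structural feature is that the leading term has coefficient exactly $1$. It then remains to verify that, under the hypothesis $\varepsilon_n^2\log(1/\varepsilon_n) = o_n(\rSigma/n)$, every remaining contribution is $o_n(\sqrt{\rSigma/n})$: first, $\alpha_\varepsilon = \sqrt{\varepsilon(1-\varepsilon)}/(1-2\varepsilon)\to 0$, so $1/(1-\alpha_\varepsilon) = 1 + o_n(1)$ and $C\sqrt\varepsilon\,\sqrt{\rSigma/n} = o_n(\sqrt{\rSigma/n})$; second, $\varepsilon = o_n(\sqrt{\rSigma/n})$ because $\log(1/\varepsilon_n)\to\infty$ forces $\varepsilon_n^2 = o_n(\rSigma/n)$, and the same fact gives $C\sqrt\varepsilon(\rSigma/n)^{1/4} = o_n(\sqrt{\rSigma/n})$ (square: $\varepsilon\sqrt{\rSigma/n} = o_n(\rSigma/n)$); third, $C\varepsilon\sqrt{\log(1/\varepsilon)} = o_n(\sqrt{\rSigma/n})$ is precisely the stated assumption. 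Assembling these estimates yields
\begin{align}
\|\hat\bmu_n^{\textup{IR}} - \bmu^*\|_{\mathbb L_2} \le \sqrt{\rSigma/n}\,\big(1 + o_n(1)\big),
\end{align}
and since the right-hand side depends on neither $\bmu^*$ nor the particular element of $\textup{GAC}(\bmu^*,\bSigma,\varepsilon)$, taking suprema and restoring the factor $\|\bSigma\|_{\textup{op}}^{1/2}$ gives the bound required in \Cref{def:3}.

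I do not expect a genuine obstacle here; the argument is essentially bookkeeping on top of \Cref{fact:f4}. The only point deserving attention is identifying the \emph{binding} error term: it is $C\varepsilon\sqrt{\log(1/\varepsilon)}$ in the bound on $\|\tilde\Xi\|_{\mathbb L_2}$ that dictates the exact form of the efficiency condition, explaining why one assumes $\varepsilon_n^2\log(1/\varepsilon_n) = o_n(\rSigma/n)$ rather than merely $\varepsilon_n^2 = o_n(\rSigma/n)$ (the latter, which is strictly weaker, suffices to absorb all the other terms). One should also confirm, when writing out the details, that the factor $\bfE^{1/2}[G(\bw^*,\bmu^*)]$ entering $\tilde\Xi$ via \eqref{ineq:3} contributes only to terms already accounted for, which it does because it is multiplied by $\alpha_\varepsilon$, a quantity of order $\sqrt\varepsilon$.
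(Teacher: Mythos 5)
Your proposal is correct and follows essentially the same route as the paper: it combines the recursion bound \eqref{main:ineq2}--\eqref{ineq:GMR} with the bound on $\|\tilde\Xi\|_{\mathbb L_2}$ coming from \eqref{ineq:3}--\eqref{ineq:4} (the intermediate display in \eqref{ineq:5}), and checks that under $\varepsilon_n^2\log(1/\varepsilon_n)=o_n(\rSigma/n)$ every term except $\sqrt{\rSigma/n}$ with coefficient $1$ is negligible. The paper's proof is just a terser version of this same bookkeeping, so no discrepancy to report.
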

	\end{minipage}
	\end{center}

	The proof of this fact follows from \eqref{main:ineq2} and \eqref{ineq:4}. 
	Indeed, if $\varepsilon^2\log(1/\varepsilon) = o_n(\rSigma/n)$, \eqref{ineq:4} 
	implies that
	\begin{align}
	\|\tilde\Xi\|_{\mathbb L_2}^2 &\le \frac{\rSigma}{n}\big(1+ o_n(1)\big).
	\end{align}
	Injecting this bound in \eqref{main:ineq2} and using the fact that $\varepsilon$
	tends to zero, we get the claim of \Cref{fact:f5}.

	\section{Relation to prior work and discussion}\label{sec:discuss}
	
	Robust estimation of a mean is a statistical problem
	studied by many authors since at least sixty years. 
	It is impossible to give an overview of all existing 
	results and we will not try to do it here. The interested
	reader may refer to the books \citep{maronna2006robust} 
	and \citep{Huber2009}. We will rather focus here on some 
	recent results that are the most closely related to the present
	work. Let us just recall that \cite{Huber2009} enumerates
	three desirable properties of a statistical procedure: 
	efficiency, stability and breakdown. We showed here that
	iteratively reweighted mean estimator possesses these features and, 
	in addition, is equivariant and computationally tractable.
	
	To the best of our knowledge, the form $\sqrt{p/n} + 
	\varepsilon$ of the minimax risk in the Gaussian mean 
	estimation problem has been first obtained by \cite{chen2018}. 
	They proved that this rate holds with high probability 
	for the Tukey median, which is known to be computationally 
	intractable in the high-dimensional setting. 
	The first nearly-rate-optimal and computationally tractable
	estimators have been proposed by \cite{LaiRV16} and
	\cite{DiakonikolasKK016}\footnote{See \citep{DiakonikolasKKL19} 
	for the extended version}. The methods analyzed in these
	papers are different, but they share the same idea: If for 
	a subsample of points the empirical covariance matrix is
	sufficiently close to the theoretical one, then the 
	arithmetic mean of this subsample is a good estimator
	of the theoretical mean. Our method is based on this 
	idea as well, which is mathematically formalized in
	\eqref{main:ineq}, see also \Cref{proposition1} below. 
	
	Further improvements in running times---up to obtaining a
	linear in $np$ computational comp\-lexity in the case of a 
	constant $\varepsilon$---are presented in \citep{Cheng19}. 
	Some lower bounds suggesting that the log-factor in the term 
	$\varepsilon\sqrt{\log(1/\varepsilon)}$ cannot be removed 
	from the rate of computationally tractable estimators 
	are established in \citep{DiakonikolasKS17}. In a slightly 
	weaker model of corruption, \cite{DiakonikolasKK018} 
	propose an iterative filtering algorithm that achieves 
	the optimal rate $\varepsilon$ without the extra factor
	$\sqrt{\log(1/\varepsilon)}$. On a related note, \citep{coldal} 
	shows that in a weaker contamination model termed as parametric
	contamination, the carefully trimmed sample mean can achieve a better
	rate than that of the coordinatewise/geometric median. 
	
	An overview of the recent advances on robust estimation with a 
	focus on computational aspects can be found in \citep{Diak_review}. 
	Extensions of these methods to the sparse mean estimation are 
	developed in \citep{BalakrishnanDLS17, DiakonikolasKKP19}. 
	All these results are proved to hold on an event with 
	a prescribed probability, see \citep{bateni2020minimax} 
	for a relation between results in expectation and 
	those with high probability, as well as for the 
	definitions of various types of contamination. 
	
	The proposed estimator shares some features
	with the adaptive weights smoothing \citep{PolzehlSpok}. 
	Adaptive weights smoothing (AWS) iteratively updates the weights
	assigned to observations, similarly to Algorithm~\ref{algo:a1}.
	The main difference is that the weights in AWS are not measuring
	the outlyingness but the relevance for interpolating a function
	at a given point. There are also many other statistical problems
	in which robust estimation has been recently revisited from
	the point of view of minimax rates. This includes scale
	and covariance matrix estimation \citep{chen2018,
	comminges2018adaptive}, matrix completion \citep{Klopp2017}, 
	multivariate regression \citep{Gao2020,Thompson,geoffrey2019erm}, 
	classification \citep{cannings2020classification,Bradic}, subspace 
	clustering \citep{Soltanolkotabi}, community detection \citep{Cai}, etc. 
	Properties of robust $M$-estimators in high-dimensional settings 
	are studied in \citep{Loh,Elsener}.  
	There is also an increasing body of literature on the robustness 
	to heavy tailed	distributions \citep{Lugosi1,Lugosi2,Devroye, 
	Lecue2019,lecu2017robust,Minsker} and the computationally tractable 
	methods in this context \citep{Hopkins1,flammarion,DongH019,
	depersin2019robust}. 

	A potentially useful observation, from 
	a computational standpoint, is that it is sufficient 
	to solve the optimization problem in \Cref{iter:1} 
	up to an error proportional to $\sqrt{\rSigma/n} 
	+ \sqrt{\varepsilon}$. Indeed, one can easily repeat
	all the steps in \eqref{eq:3} to check that this
	optimization error does not alter the order of magnitude 
	of the statistical error.

	\section{Formal statement of main building blocks}\label{sec:formal}
	
	The first building block, inequality \eqref{main:ineq}, 
	used in Section \ref{sec:method} to analyze the risk of 
	$\hat{\bmu}^{\textup{IR}}_n$, upper bounds the error 
	of estimating the mean by the error of estimating
	the covariance matrix. In order to formally state 
	the result, we need some additional notations.
	
	Let $\bw\in\bs\Delta^{n-1}$ be a vector of weights 
	and let $I$ be a subset of $\{1,\ldots,n\}$. We use 
	the notation $\bw_{I}$ for the vector obtained from 
	$\bw$ by zeroing all the entries having indices outside 
	$I$. Considering $\bw$ as a probability on $\{1,\ldots, 
	n\}$, we define $\bw_{|I}$ as the corresponding 
	conditional probability on $I$ that is 
	\begin{align}
	\bw_{|I}\in \bs\Delta^{n-1},\qquad
	(\bw_{|I})_i = (w_i/ \|\bw_{I}\|_1) 
	\mathds 1(i\in I).
	\end{align}
	We will make repeated use of the notation
	\begin{align}
			\bar{\bs X}_{\bw} = \sum_{i=1}^n w_i \bs X_i,\qquad     
			\bar{\bxi}_{\bw_{|I}} = \sum_{i\in I} 
			(\bw_{|I})_i \bxi_i,\qquad
			\bar{\bzeta}_{\bw_{|I}} = \sum_{i\in I} 
			(\bw_{|I})_i \bzeta_i.
	\end{align}

	\begin{proposition}\label{proposition1}
	Let $\bzeta_1,\ldots,\bzeta_n$ be a set of vectors such 
	that $\bzeta_i = \bs X_i-\bmu^*$ for every $i\in \mathcal I$, 
	where $\mathcal I$ is a subset of $\{1,\ldots,n\}$. For every 
	weight vector $\bw\in \bs \Delta^{n-1}$ such that 
	$\sum_{i\not\in \mathcal I} w_i\le \varepsilon_{w}\le 1/2$ and 
	for every $p\times p$ matrix $\bSigma$, it holds  
	\begin{align}
	\|\bar{\bs X}_{\bw} - {\bmu}^*\|_2 
		&\le \frac{\sqrt{\varepsilon_{w}}}{1-\varepsilon_{w}}\,
			\lambda_{\max,+}^{1/2}\Big(\sum_{i=1}^n  w_i(\bs X_i - 
			\bar{\bs X}_{\bw})^{\otimes 2} 
			- \bSigma\Big) + R(\bzeta,\bw,\mathcal I),
	\end{align}
	with the remainder term
	\begin{align}
	R(\bzeta,\bw,\mathcal I) 
		= 2\sqrt{\|\bSigma\|_{\rm op}}\varepsilon_{w}+\sqrt{2\varepsilon_{w}}\,
		\lambda_{\max,+}^{1/2}
		\bigg(\sum_{i\in \mathcal I} (\bw_{|\mathcal I})_i (\bSigma - 
		\bzeta_i\bzeta_i^\top)\bigg) + (1+\sqrt{2\varepsilon_{w}}) 
		\|\bar{\bzeta}_{\bw_{|\mathcal I}}\|_2.
	\end{align}
	\end{proposition}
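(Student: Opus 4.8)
The plan is to decompose $\bar{\bs X}_{\bw}-\bmu^*$ according to the partition of the index set into inliers $\mathcal I$ and the complement, and to control the two pieces separately. Write $\mu_w := \|\bw_{\mathcal I}\|_1 = \sum_{i\in\mathcal I} w_i \ge 1-\varepsilon_w$, so that the remaining mass $\sum_{i\notin\mathcal I} w_i = 1-\mu_w \le \varepsilon_w \le 1/2$. The first step is the algebraic identity
\begin{align}
\bar{\bs X}_{\bw} - \bmu^* = \sum_{i=1}^n w_i(\bs X_i - \bmu^*) = \mu_w\,\bar{\bzeta}_{\bw_{|\mathcal I}} + \sum_{i\notin\mathcal I} w_i(\bs X_i - \bmu^*),
\end{align}
using that $(\bw_{|\mathcal I})_i = w_i/\mu_w$ on $\mathcal I$ and $\bzeta_i = \bs X_i-\bmu^*$ there. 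The term $\mu_w \bar{\bzeta}_{\bw_{|\mathcal I}}$ contributes at most $\|\bar{\bzeta}_{\bw_{|\mathcal I}}\|_2$, which accounts for the leading $\|\bar{\bzeta}_{\bw_{|\mathcal I}}\|_2$ in $R(\bzeta,\bw,\mathcal I)$; the work is in bounding the outlier contribution $\sum_{i\notin\mathcal I} w_i(\bs X_i-\bmu^*)$.

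The key idea for the outlier term is the standard variational trick: for any unit vector $\bv$, by Cauchy--Schwarz in the weighted inner product,
\begin{align}
\Big\langle \bv, \sum_{i\notin\mathcal I} w_i(\bs X_i-\bmu^*)\Big\rangle \le \Big(\sum_{i\notin\mathcal I} w_i\Big)^{1/2}\Big(\sum_{i\notin\mathcal I} w_i \langle \bv, \bs X_i-\bmu^*\rangle^2\Big)^{1/2} \le \sqrt{\varepsilon_w}\,\Big(\sum_{i\notin\mathcal I} w_i \langle \bv, \bs X_i-\bmu^*\rangle^2\Big)^{1/2}.
\end{align}
I would then relate $\sum_{i\notin\mathcal I} w_i(\bs X_i-\bmu^*)^{\otimes 2}$ to $\sum_{i=1}^n w_i(\bs X_i-\bar{\bs X}_{\bw})^{\otimes 2} - \bSigma$ plus controllable slack. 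Concretely: $\sum_{i\notin\mathcal I} w_i(\bs X_i-\bmu^*)^{\otimes 2} = \sum_{i=1}^n w_i(\bs X_i-\bmu^*)^{\otimes 2} - \sum_{i\in\mathcal I} w_i \bzeta_i\bzeta_i^\top$, and $\sum_{i=1}^n w_i(\bs X_i-\bmu^*)^{\otimes 2} \preceq \sum_{i=1}^n w_i(\bs X_i-\bar{\bs X}_{\bw})^{\otimes 2} + (\bar{\bs X}_{\bw}-\bmu^*)^{\otimes 2}$ (actually the identity \eqref{eq:minG}-style decomposition shows $\sum_i w_i(\bs X_i - \bmu^*)^{\otimes 2} = \sum_i w_i(\bs X_i-\bar{\bs X}_{\bw})^{\otimes 2} + (\bar{\bs X}_{\bw}-\bmu^*)^{\otimes 2}$ exactly). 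Adding and subtracting $\bSigma$ and $\sum_{i\in\mathcal I} w_i\bSigma = \mu_w\bSigma$, one bounds $\lambda_{\max,+}$ of the outlier second-moment matrix by $\lambda_{\max,+}(\sum_i w_i(\bs X_i-\bar{\bs X}_{\bw})^{\otimes 2}-\bSigma) + \|\bar{\bs X}_{\bw}-\bmu^*\|_2^2 + \mu_w\,\lambda_{\max,+}(\sum_{i\in\mathcal I}(\bw_{|\mathcal I})_i(\bSigma - \bzeta_i\bzeta_i^\top)) + (1-\mu_w)\|\bSigma\|_{\mathrm{op}}$. Taking square roots (via $\sqrt{a+b}\le\sqrt a+\sqrt b$), multiplying by $\sqrt{\varepsilon_w}$, and using $1-\mu_w\le\varepsilon_w$ produces the terms $\sqrt{\varepsilon_w}\,\lambda_{\max,+}^{1/2}(\cdots-\bSigma)$, $\sqrt{\varepsilon_w}\,\|\bar{\bs X}_{\bw}-\bmu^*\|_2$, $\sqrt{2\varepsilon_w}\,\lambda_{\max,+}^{1/2}(\sum_{i\in\mathcal I}(\bw_{|\mathcal I})_i(\bSigma-\bzeta_i\bzeta_i^\top))$, and a term of order $\varepsilon_w\sqrt{\|\bSigma\|_{\mathrm{op}}}$.

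**The last step** is to absorb the self-referential term $\sqrt{\varepsilon_w}\,\|\bar{\bs X}_{\bw}-\bmu^*\|_2$. Taking $\bv$ to be the unit vector in the direction of $\bar{\bs X}_{\bw}-\bmu^*$ turns the earlier display into an inequality of the form $\|\bar{\bs X}_{\bw}-\bmu^*\|_2 \le \|\bar{\bzeta}_{\bw_{|\mathcal I}}\|_2 + \sqrt{\varepsilon_w}\,\|\bar{\bs X}_{\bw}-\bmu^*\|_2 + (\text{rest})$, and since $\varepsilon_w\le 1/2$ we have $\sqrt{\varepsilon_w}<1$, so we can move the self-referential term to the left and divide by $1-\sqrt{\varepsilon_w}$. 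A short computation checks that $\frac{1}{1-\sqrt{\varepsilon_w}}$ times each surviving coefficient matches the claimed constants $\frac{\sqrt{\varepsilon_w}}{1-\varepsilon_w}$, $1+\sqrt{2\varepsilon_w}$, $2$ — here one uses $\frac{\sqrt{\varepsilon_w}}{1-\sqrt{\varepsilon_w}} = \frac{\sqrt{\varepsilon_w}(1+\sqrt{\varepsilon_w})}{1-\varepsilon_w} \ge \frac{\sqrt{\varepsilon_w}}{1-\varepsilon_w}$ so the stated bound is (slightly) weaker than what the argument gives, which is fine. The main obstacle I anticipate is bookkeeping: getting the constants on the three remainder terms to come out exactly as stated requires care with where the factor $\mu_w\in[1-\varepsilon_w,1]$ is kept versus bounded by $1$, and with the repeated use of subadditivity of $\sqrt{\cdot}$ and of $\lambda_{\max,+}$; the conceptual content (decompose, Cauchy--Schwarz, matrix inequality, solve the self-referential inequality) is routine.
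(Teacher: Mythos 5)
Your architecture (inlier/outlier split, Cauchy--Schwarz, comparison of the outlier second-moment matrix with $\sum_i w_i(\bs X_i-\bar{\bs X}_{\bw})^{\otimes 2}-\bSigma$, absorption of a self-referential term) is close in spirit to the paper's, but the point where you center the outlier deviations creates a genuine gap: your route cannot produce the constants stated in the proposition. Because you apply Cauchy--Schwarz to $\sum_{i\notin\mathcal I}w_i(\bs X_i-\bmu^*)$ with deviations measured from $\bmu^*$, the self-referential quantity only appears after the square-root step, as $\sqrt{\varepsilon_w}\,\|\bar{\bs X}_{\bw}-\bmu^*\|_2$, so your absorption divides by $1-\sqrt{\varepsilon_w}$. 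Since $\sqrt{\varepsilon_w}\ge \varepsilon_w$, we have $\frac{\sqrt{\varepsilon_w}}{1-\sqrt{\varepsilon_w}}\ge\frac{\sqrt{\varepsilon_w}}{1-\varepsilon_w}$, which means your argument yields a \emph{weaker} inequality than the one claimed; your final sentence inverts the direction of this comparison. The inflation is not cosmetic: at $\varepsilon_w=1/2$ your leading coefficient is $\sqrt{1/2}/(1-\sqrt{1/2})\approx 2.41$ versus the stated $\sqrt{1/2}/(1/2)\approx 1.41$, your coefficient of $\lambda_{\max,+}^{1/2}\big(\sum_{i\in\mathcal I}(\bw_{|\mathcal I})_i(\bSigma-\bzeta_i\bzeta_i^\top)\big)$ can reach $\approx 2.41$ versus the stated $\sqrt{2\varepsilon_w}=1$, and your coefficient of $\|\bar{\bzeta}_{\bw_{|\mathcal I}}\|_2$ can reach $(1-\sqrt{1/2})^{-1}\approx 3.41$ versus $1+\sqrt{2\varepsilon_w}=2$. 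The leading factor $\frac{\sqrt{\varepsilon_w}}{1-\varepsilon_w}$ is exactly what becomes $\alpha_\varepsilon=\sqrt{\varepsilon(1-\varepsilon)}/(1-2\varepsilon)$ in the contraction argument and fixes the breakdown value $(5-\sqrt5)/10$, so proving the bound with $1-\sqrt{\varepsilon_w}$ in the denominator does not establish the proposition nor the facts built on it.

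The missing idea is to keep the self-referential term \emph{linear}, i.e.\ not to let it pass through Cauchy--Schwarz. The paper first derives the exact identity
\begin{align}
\bar{\bs X}_{\bw} - \bmu^* - \bar{\bzeta}_{\bw_{|\mathcal I}}
= \frac{1}{1-\bar\varepsilon}\sum_{i\in\mathcal I^c} w_i\big(\bs X_i-\bar{\bs X}_{\bw}\big),
\qquad \bar\varepsilon=\sum_{i\notin\mathcal I}w_i,
\end{align}
where the term $\bar\varepsilon(\bar{\bs X}_{\bw}-\bmu^*)$ has been moved to the left-hand side exactly; this is where the factor $1/(1-\varepsilon_w)$ comes from. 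Only afterwards is Cauchy--Schwarz applied, to the outlier deviations from $\bar{\bs X}_{\bw}$ (not from $\bmu^*$), so no extra $(\bar{\bs X}_{\bw}-\bmu^*)^{\otimes 2}$ term is generated; the inlier block $\sum_{i\in\mathcal I}w_i(\bs X_i-\bar{\bs X}_{\bw})^{\otimes 2}$ is then lower-bounded by re-centering at $\bar{\bs X}_{\bw_{|\mathcal I}}$, which is what produces the $\bSigma-\bzeta_i\bzeta_i^\top$ term and $\|\bar{\bzeta}_{\bw_{|\mathcal I}}\|_2^2$ with the stated $\sqrt{2\varepsilon_w}$-type factors (using $\varepsilon_w\le 1/2$). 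Concretely, if in your decomposition you write $\sum_{i\notin\mathcal I}w_i(\bs X_i-\bmu^*)=\sum_{i\notin\mathcal I}w_i(\bs X_i-\bar{\bs X}_{\bw})+(1-\mu_w)(\bar{\bs X}_{\bw}-\bmu^*)$ \emph{before} invoking Cauchy--Schwarz, the self-referential term enters with coefficient $1-\mu_w\le\varepsilon_w$ and your argument essentially becomes the paper's, with the stated constants.
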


    The proof of this result is postponed to the last 
    section. In simple words, the claim of proposition 
    is that the estimation error of the weighted mean 
    $\bar\bX_{\bw}$ is, up to a remainder term, governed 
    by the quantity $G(\bw,\bar{\bX}_{\bw})^{1/2}$. It turns out 
    that the remainder term is bounded by a small quantity 
    uniformly in $\bw$ and $\mathcal I$, provided that 
    these two satisfy suitable conditions. For $\mathcal I$,
    it is enough to constrain the cardinality of its complement 
    $\mathcal I^c = \mathcal O$. For $\bw$, it appears to be 
    sufficient to assume that its sup-norm is small. In that 
    respect, the following lemma plays a key role in the 
    proof.
    
    \begin{lemma}\label{lemma0}
	For any integer $\ell>0$, let $\mathcal W_{n,\ell}$ be 
	the set of all $\bw\in\bDelta^{n-1}$ such that $\max_i 
	w_i\le 1/\ell$. The following facts hold: 
	\vspace{-10pt}
	\begin{enumerate}
		\item[\textup{   i)}] For every $J\subset \{1,\ldots,n\}$ such 
		that $|J|\ge \ell$, the uniform weight vector $\bu^{J}\in 
		\mathcal W_{n,\ell}$.
		\item[\textup{  ii)}] The set $\mathcal W_{n,\ell}$ is 
		the convex hull of the uniform weight vectors $\{\bu^{J} : 
		|J| = \ell\}$.
		\item[\textup{ iii)}] For every convex mapping 
		$G:\bs\Delta^{n-1}\to\RR$,  we have 
		\begin{align}
			\sup_{\bw\in \mathcal W_{n,\ell}} G(\bw) = 
			\max_{|J| =\ell} G(\bu^J),
		\end{align}
		where the last maximum is over all subsets $J$ of cardinality 
		$\ell$ of the set $\{1,\ldots,n\}$.
		\item[\textup{ iv)}] If $\bw\in\mathcal W_{n,\ell}$ then for any 
		$I$ such that $|I|\ge \ell'>n-\ell$, we have 
		$\bw_{|I}\in \mathcal W_{n,\ell+\ell'-n}$.
	\end{enumerate} 
	\vspace{-10pt}
	\end{lemma}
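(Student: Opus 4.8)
\textbf{Proof proposal for Lemma \ref{lemma0}.}

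The plan is to prove the four items in order, since each feeds the next. For item~(i), I would simply observe that the uniform weight vector $\bu^J$ has entries $1/|J|$ on $J$ and zero elsewhere; since $|J|\ge\ell$ we have $1/|J|\le 1/\ell$, so $\bu^J\in\mathcal W_{n,\ell}$, and it clearly lies in $\bDelta^{n-1}$. For item~(ii), the inclusion of the convex hull into $\mathcal W_{n,\ell}$ is immediate from (i) together with the fact that $\mathcal W_{n,\ell}$ is itself convex (an intersection of the simplex with the box $\{\|\bw\|_\infty\le 1/\ell\}$). The reverse inclusion is the substantive part: given $\bw\in\mathcal W_{n,\ell}$ I would argue that $\bw$ is a convex combination of vectors $\bu^J$ with $|J|=\ell$. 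The cleanest route is to note that $\mathcal W_{n,\ell}$ is a polytope whose vertices must be extreme points; an extreme point of $\{\bw\in\bDelta^{n-1}:\|\bw\|_\infty\le 1/\ell\}$ cannot have two coordinates strictly between $0$ and $1/\ell$ (otherwise one could perturb along the corresponding difference direction within the set), so at a vertex all nonzero coordinates except possibly one equal $1/\ell$; combined with the constraint that the coordinates sum to $1$, this forces $\ell$ to be an integer dividing the mass appropriately and the vertex to be exactly some $\bu^J$ with $|J|=\ell$. (If $\ell$ does not divide things evenly one still gets that every vertex is of this form because the sum constraint is $w_1+\dots+w_n=1$ with $\ell\cdot(1/\ell)=1$, so exactly $\ell$ coordinates equal $1/\ell$ and the rest vanish.) By Minkowski's theorem the polytope is the convex hull of its vertices, giving (ii).

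Item~(iii) is then a one-line consequence of (ii): a convex function on a polytope attains its maximum at a vertex, and by (ii) the vertices are precisely the $\bu^J$ with $|J|=\ell$, so $\sup_{\bw\in\mathcal W_{n,\ell}}G(\bw)=\max_{|J|=\ell}G(\bu^J)$. I would just invoke the standard fact that for convex $G$ and a point $\bw=\sum_k\lambda_k\bu^{J_k}$ one has $G(\bw)\le\sum_k\lambda_k G(\bu^{J_k})\le\max_k G(\bu^{J_k})$, while the reverse inequality is trivial since each $\bu^J\in\mathcal W_{n,\ell}$.

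For item~(iv), suppose $\bw\in\mathcal W_{n,\ell}$ and $|I|\ge\ell'>n-\ell$. The conditional vector $\bw_{|I}$ has entries $w_i/\|\bw_I\|_1$ for $i\in I$. To bound its sup-norm I need a lower bound on $\|\bw_I\|_1=\sum_{i\in I}w_i=1-\sum_{i\notin I}w_i$. Since $|I^c|=n-|I|\le n-\ell'$ and each $w_i\le 1/\ell$, we get $\sum_{i\notin I}w_i\le (n-\ell')/\ell$, hence $\|\bw_I\|_1\ge 1-(n-\ell')/\ell=(\ell+\ell'-n)/\ell$. Therefore for each $i\in I$, $(\bw_{|I})_i=w_i/\|\bw_I\|_1\le (1/\ell)\big/\big((\ell+\ell'-n)/\ell\big)=1/(\ell+\ell'-n)$, which is exactly the statement $\bw_{|I}\in\mathcal W_{n,\ell+\ell'-n}$ (the denominator is positive by the hypothesis $\ell'>n-\ell$). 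The main obstacle is the vertex characterization in (ii); everything else is routine bookkeeping. I would be careful there to handle the edge cases (e.g.\ when $n/\ell$ is not an integer the sum constraint still pins down the vertices because $1$ is an exact multiple of $1/\ell$, so no "leftover" coordinate strictly between $0$ and $1/\ell$ can survive at an extreme point), and to confirm that no spurious vertices with a fractional coordinate arise.
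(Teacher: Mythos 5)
Your proposal is correct and follows essentially the same route as the paper: identify the extreme points of the polytope $\mathcal W_{n,\ell}$ via a two-coordinate perturbation argument (ruling out a single fractional coordinate because the total mass $1$ is an exact multiple of $1/\ell$), invoke the convex-hull-of-extreme-points representation for (ii), deduce (iii) by convexity, and prove (iv) by the same lower bound $\|\bw_I\|_1\ge(\ell+\ell'-n)/\ell$. No gaps to report.
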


    Let us denote by $\mathcal W_n(\varepsilon)$ the set 
    $\mathcal W_{n,n(1-\varepsilon)}$. This is exactly the 
    feasible set in the optimization problem defining the 
    iterations of Algorithm~\ref{algo:a1}. It is clear that 
    for $\bw\in \mathcal W_n(\varepsilon)$ and for 
    $| I^c|\le n\varepsilon$, we have $\sum_{i\not\in 
     I} w_i\le \varepsilon/(1-\varepsilon)$. We now 
    infer from \Cref{proposition1} and \eqref{eq:minG} that 
    \begin{align}
		\|\bar{\bs X}_{\bw} - {\bmu}^*\|_2 
		&\le \alpha_\varepsilon\,\lambda_{\max,+}^{1/2}
		\Big(\sum_{i=1}^n  w_i(\bs X_i - \bar{\bs X}_{\bw})
		^{\otimes 2} - \bSigma\Big) + \Xi\\
		& = \alpha_\varepsilon\, \inf_{\bmu\in\mathbb R^p}G(\bw,
		\bmu)^{1/2} + \Xi,
		\label{eq:fund3}
	\end{align}
	with $\Xi$ being the largest value of $R(\bzeta,\bw, I)$ 
	over all possible weights $\bw\in \mathcal W_n(\varepsilon)$ 
	and subsets $I\subset\{1,\ldots,n\}$ satisfying $| 
	I^c|\le n\varepsilon$. The second building block, formally 
	stated in the next proposition, provides a suitable upper
	bound for the random variable $\Xi$. 
	
	\begin{proposition}\label{proposition2}
	Let $R(\bzeta,\bw, I)$ be defined in \Cref{proposition1} and 
	$\bzeta_1,\ldots,\bzeta_n$ be i.i.d. centered Gaussian random 
	vectors with covariance matrix $\bSigma$ satisfying 
	$\lambda_{\max}(\bSigma) = 1$. If $\varepsilon\le 0.28$, 
	then the random variable 
	\begin{align}
		\Xi = \sup_{\bw\in \mathcal W_n(\varepsilon)} 
		\max_{|I|\ge n(1-\varepsilon)} R(\bzeta,\bw, I)
	\end{align}
	satisfies, for a universal constant $C>0$, the inequalities 
	\begin{align}
		\|\Xi\|_{\mathbb L_2} &\le \sqrt{\rSigma/n}(1+ C
		\sqrt{\varepsilon}) + C\sqrt{\varepsilon}(\rSigma/n)^{1/4} 
		+ C\varepsilon\sqrt{\log(1/\varepsilon)} \label{ineq:xi1}\\
		\|\Xi\|_{\mathbb L_2} &\le \sqrt{p/n}(1+ 16\sqrt{\varepsilon})
		+ 5\sqrt{3\varepsilon}\,(p/n)^{1/4} + 
		32\varepsilon\sqrt{\log(2/\varepsilon)},\label{ineq:xi2}
	\end{align}
	where for the second inequality we assumed that $p\ge 2$ and
	$n\ge p\vee 4$.
	\end{proposition}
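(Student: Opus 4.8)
The plan is to bound, uniformly over the feasible pairs $(\bw,I)$, each of the three summands of $R(\bzeta,\bw,I)$ from \Cref{proposition1}, and then to take the $\mathbb L_2$ norm of the sum. The normalisation $\|\bSigma\|_{\textup{op}}=1$ is in force, and for $\bw\in\mathcal W_n(\varepsilon)$ and $|I|\ge n(1-\varepsilon)$ one has $\sum_{i\notin I}w_i\le n\varepsilon\,\|\bw\|_\infty\le\varepsilon/(1-\varepsilon)=:\bar\varepsilon$; since every term of $R$ is nondecreasing in $\varepsilon_w$, we may replace $\varepsilon_w$ by $\bar\varepsilon$ throughout. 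The first summand $2\sqrt{\|\bSigma\|_{\textup{op}}}\,\varepsilon_w\le 2\bar\varepsilon$ is then a deterministic $O(\varepsilon)$ quantity, absorbed into $C\varepsilon\sqrt{\log(1/\varepsilon)}$ because $\log(1/\varepsilon)\ge1$ for $\varepsilon\le0.28$.

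The device that converts the two remaining suprema over the continuum $\mathcal W_n(\varepsilon)$ into combinatorial maxima is \Cref{lemma0}: part~(iv) gives $\bw_{|I}\in\mathcal W_{n,m}$ with $m=\lceil n(1-2\varepsilon)\rceil$, and since $\bv\mapsto\lambda_{\max,+}\bigl(\bSigma-\sum_i v_i\bzeta_i\bzeta_i^\top\bigr)$ and $\bv\mapsto\|\sum_i v_i\bzeta_i\|_2$ are convex on $\bDelta^{n-1}$, part~(iii) turns the suprema over $\mathcal W_{n,m}$ into maxima over uniform weights on subsets of size $m$. Thus it suffices to bound, in $\mathbb L_2$, the quantities $A:=\max_{|J|=m}\bigl\|\tfrac1m\sum_{i\in J}\bzeta_i\bigr\|_2$ and $B:=\max_{|J|=m}\lambda_{\max,+}\bigl(\bSigma-\tfrac1m\sum_{i\in J}\bzeta_i\bzeta_i^\top\bigr)$, after which the second and third summands of $\Xi$ are at most $\sqrt{2\bar\varepsilon}\,B^{1/2}$ and $(1+\sqrt{2\bar\varepsilon})\,A$ respectively.

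The common treatment of $A$ and $B$ is the splitting $\sum_{i\in J}=\sum_{i=1}^n-\sum_{i\notin J}$, with $|J^c|=n-m\le2\varepsilon n$. For $A$ this yields $A\le(n/m)\,\|\bar\bzeta_n\|_2+(1/m)\max_{|S|\le2\varepsilon n}\|\sum_{i\in S}\bzeta_i\|_2$, where $\|\bar\bzeta_n\|_{\mathbb L_2}=\sqrt{\rSigma/n}$; the second piece is controlled, uniformly over the $\binom{n}{n-m}$ admissible sets $S$, by combining the Gaussian concentration of the $\sqrt{|S|}$-Lipschitz map $\|\sum_{i\in S}\bzeta_i\|_2$ (of mean $\sqrt{|S|\,\rSigma}$) with the union-bound cost $\log\binom{n}{n-m}\lesssim\varepsilon n\log(1/\varepsilon)$, giving $\mathbb L_2$-order $\sqrt{\varepsilon n\rSigma}+\varepsilon n\sqrt{\log(1/\varepsilon)}$; dividing by $m\asymp n$ and multiplying by $1+\sqrt{2\bar\varepsilon}$ then produces $\sqrt{\rSigma/n}(1+C\sqrt\varepsilon)+C\varepsilon\sqrt{\log(1/\varepsilon)}$. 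For $B$, the semidefinite bound $\bSigma-\tfrac nm\widehat\bSigma_n\preceq\bSigma-\widehat\bSigma_n$ (with $\widehat\bSigma_n=\tfrac1n\sum_i\bzeta_i\bzeta_i^\top$) gives $B\le\lambda_{\max,+}(\bSigma-\widehat\bSigma_n)+(1/m)\max_{|S|\le2\varepsilon n}\lambda_{\max}\bigl(\sum_{i\in S}\bzeta_i\bzeta_i^\top\bigr)$; the first term has $\mathbb L_2$-order $\sqrt{\rSigma/n}+\rSigma/n$ by the Koltchinskii–Lounici bound used in \eqref{ineq:3} together with the concentration of $\|\widehat\bSigma_n-\bSigma\|_{\textup{op}}$, while the second is treated as for $A$, using the dimension-free estimate $\EE\|Z_S\|_{\textup{op}}\le\sqrt{\tr\bSigma}+\sqrt{|S|\,\|\bSigma\|_{\textup{op}}}$ for the $p\times|S|$ matrix $Z_S$ with columns $\{\bzeta_i:i\in S\}$ and the same union bound, giving $\mathbb L_2$-order $\rSigma/n+\varepsilon\log(1/\varepsilon)$ after dividing by $m$. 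Hence $\|B\|_{\mathbb L_2}\lesssim\sqrt{\rSigma/n}+\varepsilon\log(1/\varepsilon)$ and $\sqrt{2\bar\varepsilon}\,\|B^{1/2}\|_{\mathbb L_2}\le\sqrt{2\bar\varepsilon}\,\|B\|_{\mathbb L_2}^{1/2}\lesssim\sqrt\varepsilon(\rSigma/n)^{1/4}+\varepsilon\sqrt{\log(1/\varepsilon)}$. Summing the three contributions and absorbing lower-order pieces via $1\le\rSigma$ and $\rSigma\le c_1n$ yields \eqref{ineq:xi1}; \eqref{ineq:xi2} follows by repeating the same steps with explicit constants in the Gaussian-concentration, matrix-deviation and binomial estimates and by bounding $\rSigma=\tr\bSigma\le p$ in the subleading terms, the hypotheses $p\ge2$ and $n\ge p\vee4$ ensuring $\rSigma/n\le1$.

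\textbf{Main obstacle.} The crux is the uniform control of the leftover covariance term $\max_{|S|\le2\varepsilon n}\lambda_{\max}\bigl(\sum_{i\in S}\bzeta_i\bzeta_i^\top\bigr)$: a crude union bound over the $\binom{n}{n-m}$ subsamples would either reintroduce the ambient dimension $p$ in the leading term or produce a spurious $\varepsilon^{3/4}(\log(1/\varepsilon))^{1/4}$ contribution, so one must couple a \emph{dimension-free} expectation bound (scaling like $\tr\bSigma+|S|\,\|\bSigma\|_{\textup{op}}$) with the union bound so that the combinatorial price $\varepsilon n\log(1/\varepsilon)$ enters only \emph{inside the square root}, whence the benign final factor $\varepsilon\sqrt{\log(1/\varepsilon)}$ after multiplication by $\sqrt\varepsilon$. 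A secondary point is the passage from high-probability to $\mathbb L_2$ bounds, which is handled by noting that all the statistics involved are Lipschitz functionals of Gaussian vectors or matrices, so their $\mathbb L_2$ norms coincide with their means up to an additive multiple of the Lipschitz constant.
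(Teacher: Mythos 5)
Your proposal is correct and essentially reproduces the paper's own argument: after bounding $\varepsilon_w$ by $\varepsilon/(1-\varepsilon)$, you use the convexity/extreme-point properties of \Cref{lemma0} to pass to maxima over uniform weights on subsets of size about $n(1-2\varepsilon)$, split each statistic into a full-sample term plus a maximum over complements of size at most $2\varepsilon n$, and make the union-bound cost $\log\binom{n}{2\varepsilon n}\lesssim \varepsilon n\log(1/\varepsilon)$ enter through Gaussian concentration, exactly as in \Cref{lemma2,lemma3} and the singular-value estimates feeding them. The only deviations are cosmetic: you invoke Lipschitz concentration where the paper uses $\chi^2$ tail bounds, and you bound the uncentered subset covariance $\lambda_{\max}\big(\sum_{i\in S}\bzeta_i\bzeta_i^\top\big)$ via the dimension-free $\sqrt{\tr\bSigma}+\sqrt{|S|}$ estimate rather than centring and reducing to identity covariance as the paper does.
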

	
	The second inequality is weaker than the first one, since obviously $\rSigma \le p$. However, the advantage of the second inequality
	is that it comes with explicit constants and shows that these constants are not excessively large.

    To close this section, we state a theorem that rephrases 
    \Cref{fact:f4} in a way that might be more convenient for 
    future references. Its proof is omitted, since it follows 
    the lines of the proof of \Cref{fact:f4} presented above.
    
    \begin{theorem}\label{th:1}
    Let $\hat\bmu_n^{\textup{IR}}$ be the iteratively reweighted 
    mean defined in~\Cref{IR:def} and Algorithm~\ref{algo:a1}. 
    There is a universal constant $C>0$ such that for any 
    $n,p\ge 1 $ and for every $\varepsilon < (5-\sqrt{5})/10$, 
    we have
    \begin{align}
        \sup_{\bmu^*\in\RR^p}
        \sup_{\bfPn\in\textup{GAC}(\bmu^*,\bSigma,\varepsilon)} 
        \bfE^{1/2}[\|\hat\bmu_n^{\textup{IR}}-\bmu^*\|_2^2] 
        \le \frac{C\|\bSigma\|^{1/2}_{\textup{op}}}{1 - 
        2\varepsilon - \sqrt{\varepsilon(1-\varepsilon)}}\,
        \big(\sqrt{\rSigma/n} + \varepsilon\sqrt{\log(1/
        \varepsilon)}\big),
    \end{align}
    where $\bfPn\in\textup{GAC}(\bmu^*,\bSigma,\varepsilon)$ means that 
    the data points $\bX_i$ are Gaussian with adversarial contamination, 
    see \Cref{def:1}. If, in addition, $p\ge 2$ and $n\ge p\vee 10$, then 
    \begin{align}
        \sup_{\bmu^*\in\RR^p}
        \sup_{\bfPn\in\textup{GAC}(\bmu^*,\bSigma,\varepsilon)} 
        \bfE^{1/2}[\|\hat\bmu_n^{\textup{IR}}-\bmu^*\|_2^2] 
        \le \frac{10\|\bSigma\|^{1/2}_{\textup{op}}}{1-2\varepsilon - \sqrt{\varepsilon(1-\varepsilon)}}\,
        \big(\sqrt{p/n} + \varepsilon\sqrt{\log(1/\varepsilon)}\big).
    \end{align}
    \end{theorem}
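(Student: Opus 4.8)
The plan is to reassemble, with attention to constants, the chain of inequalities that already yielded \Cref{fact:f4}. First I would reduce to the normalized case $\|\bSigma\|_{\textup{op}}=1$ by replacing each $\bX_i$ by $\bX_i/\|\bSigma\|_{\textup{op}}^{1/2}$: the effective rank $\rSigma$ and the feasible set $\mathcal W_n(\varepsilon)$ are scale invariant, $\hat\bmu_n^{\textup{IR}}$ is uniform-scaling equivariant (\Cref{fact:f2}), so the general bound follows from the normalized one by multiplying by $\|\bSigma\|_{\textup{op}}^{1/2}$; in the normalization $\tr(\bSigma)=\rSigma$. Then I would rederive the one-step bound \eqref{eq:3}: the ideal weight $\bw^*$ is feasible since $\|\bw^*\|_\infty=1/|\mathcal I|\le 1/(n(1-\varepsilon))$; combining the optimality of $\hat\bw^k$, the decomposition of $\sum_i w_i(\bX_i-\bmu)^{\otimes2}$ (which gives $G(\bw^*,\hat\bmu^{k-1})\le G(\bw^*,\bar\bX_{\bw^*})+\|\bar\bX_{\bw^*}-\hat\bmu^{k-1}\|_2^2$ and $G(\bw^*,\bar\bX_{\bw^*})\le G(\bw^*,\bmu^*)$ via \eqref{eq:minG}), the identity $\bar\bX_{\bw^*}-\bmu^*=\bar\bzeta_{\bw^*}$, and \eqref{main:ineq}, yields the affine recursion $\|\hat\bmu^k-\bmu^*\|_2\le\alpha_\varepsilon\|\hat\bmu^{k-1}-\bmu^*\|_2+\tilde\Xi$ with $\alpha_\varepsilon=\sqrt{\varepsilon(1-\varepsilon)}/(1-2\varepsilon)<1$ (here $\varepsilon<(5-\sqrt5)/10$ is used) and $\tilde\Xi=\alpha_\varepsilon\big(G(\bw^*,\bmu^*)^{1/2}+\|\bar\bzeta_{\bw^*}\|_2\big)+\Xi$.

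Unfolding over the $K$ iterations and taking $\mathbb L_2$ norms gives \eqref{main:ineq2}, i.e. $\|\hat\bmu_n^{\textup{IR}}-\bmu^*\|_{\mathbb L_2}\le\alpha_\varepsilon^K\|\GMmu-\bmu^*\|_{\mathbb L_2}+\|\tilde\Xi\|_{\mathbb L_2}/(1-\alpha_\varepsilon)$. For the first term I would invoke \Cref{lem:8}, $\|\GMmu-\bmu^*\|_{\mathbb L_2}\le 2\sqrt{\rSigma}/(1-2\varepsilon)$, and observe that the formula \eqref{K} for $K$ is exactly the smallest integer for which $\alpha_\varepsilon^K\cdot 2\sqrt{\rSigma}/(1-2\varepsilon)\le\varepsilon$ (equivalently $K\ge(\log(4\rSigma)-2\log(\varepsilon(1-2\varepsilon)))/(-2\log\alpha_\varepsilon)$); since $\log(1/\varepsilon)>1$ on the admissible range of $\varepsilon$, this term is $\le\varepsilon\sqrt{\log(1/\varepsilon)}$.

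For the second term, $\|\tilde\Xi\|_{\mathbb L_2}\le\alpha_\varepsilon\big(\bfE^{1/2}[G(\bw^*,\bmu^*)]+\|\bar\bzeta_{\bw^*}\|_{\mathbb L_2}\big)+\|\Xi\|_{\mathbb L_2}$; I would bound $\bfE[G(\bw^*,\bmu^*)]$ by \eqref{ineq:3}, note $\|\bar\bzeta_{\bw^*}\|_{\mathbb L_2}^2=\tr(\bSigma)/|\mathcal I|\le 2\rSigma/n$, and bound $\|\Xi\|_{\mathbb L_2}$ by \Cref{proposition2}; then, as in \eqref{ineq:5}, using $\alpha_\varepsilon\le C\sqrt\varepsilon$ (valid since $\varepsilon\le0.28$), $\sqrt\varepsilon\,(\rSigma/n)^{1/4}=\sqrt{\varepsilon\sqrt{\rSigma/n}}\le\tfrac12(\varepsilon+\sqrt{\rSigma/n})$ and $\varepsilon\le\varepsilon\sqrt{\log(1/\varepsilon)}$, I obtain $\|\tilde\Xi\|_{\mathbb L_2}\le C\sqrt{\rSigma/n}+C\varepsilon\sqrt{\log(1/\varepsilon)}$. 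Finally, since $1/(1-\alpha_\varepsilon)=(1-2\varepsilon)/(1-2\varepsilon-\sqrt{\varepsilon(1-\varepsilon)})\le 1/(1-2\varepsilon-\sqrt{\varepsilon(1-\varepsilon)})$ and the standalone $\varepsilon$ from the first term also absorbs into the same fraction, and since every bound above is uniform in $\bmu^*$ and in the contamination ($\Xi$ depends only on the inlier noise $\bzeta_i$ and was defined as the worst case over feasible weights and inlier sets), taking $\sup_{\bmu^*}\sup_{\bfPn\in\textup{GAC}(\bmu^*,\bSigma,\varepsilon)}$ and undoing the normalization gives the first inequality.

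For the second, quantitative, inequality I would rerun exactly the same chain using the explicit-constant bound \eqref{ineq:xi2} in place of \eqref{ineq:xi1}, the explicit constant $2$ from \Cref{lem:8}, an explicit constant in \eqref{ineq:3}, and $\rSigma\le p$ to replace effective rank by $p$; the hypotheses $p\ge2$ and $n\ge p\vee 10$ ensure $\rSigma/n\le p/n\le1$, which collapses the mixed powers of $\rSigma/n$ into the $\sqrt{p/n}$ term and is what produces the clean constant $10$. The only genuinely delicate points are the verification that \eqref{K} indeed forces $\alpha_\varepsilon^K\|\GMmu-\bmu^*\|_{\mathbb L_2}\le\varepsilon$ and, in the second part, that the explicit constants emerging from \Cref{proposition2}, \Cref{lem:8} and \eqref{ineq:3} aggregate to at most $10$; the rest is a reorganization of the already-displayed argument for \Cref{fact:f4}.
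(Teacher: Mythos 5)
Your proposal is correct and follows essentially the same route as the paper, which in fact omits a separate proof of this theorem precisely because it "follows the lines of the proof of \Cref{fact:f4}": normalization via equivariance, the recursion \eqref{eq:3} built from \Cref{proposition1}, \eqref{eq:minG} and the feasibility of $\bw^*$, unfolding with the choice of $K$ and \Cref{lem:8} to kill the geometric-median term, and bounding $\tilde\Xi$ via \eqref{ineq:3} and \Cref{proposition2} (using \eqref{ineq:xi2} and the explicit-constant lemmas for the second, $\sqrt{p/n}$, bound). Your accounting of the delicate points (that \eqref{K} indeed forces $\alpha_\varepsilon^K\|\GMmu-\bmu^*\|_{\mathbb L_2}\le\varepsilon$, and that the explicit constants must be aggregated to reach $10$) is exactly what the paper leaves implicit, so no essential step is missing.
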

    
    To the best of our knowledge, this is the first result
    in the literature that provides an upper bound on the
    expected error of an outlier-robust estimator, which is 
    of nearly optimal rate. 
    
    \section{Sub-Gaussian distributions, 
    high-probability bounds and adaptation}\label{sec:extensions}
    
    Risk bounds stated in \Cref{fact:f4} and \Cref{fact:f5} 
    and formalized in \Cref{th:1} hold for the expected error
    under the condition that the reference distribution is 
    Gaussian. Furthermore, the proposed procedure relies on the
    knowledge of both the contamination rate $\varepsilon$ and 
    the covariance matrix $\bfSigma$. The goal of this section 
    is to show how some of these restriction can be alleviated. 
    
    \subsection{High-probability risk bound for a sub-Gaussian 
    reference distribution} 
    As expected, the risk bounds established in previous sections
    can be extended to the case of sub-Gaussian distributions.
    Furthermore, risk bounds holding with high-probability can be
    proved using the same techniques as those employed for
    proving the in-expectation bounds. In  order to be more
    precise, we state in this subsection the high-probability 
    counterpart of the second claim of \Cref{th:1}. The price
    to pay for covering the more general sub-Gaussian case is
    that the constant in the right hand side of the inequality 
    is no longer explicit.
    
    Recall that a zero-mean random vector $\bxi$ is called 
    sub-Gaussian with parameter $\tau>0$ (also known as the 
    variance proxy), if
	\begin{align}
	    \mathbf E[e^{\bv^\top\bxi}] \le 
	    e^{\nicefrac{\tau}2 \|\bv\|_2^2},\qquad 
	    \forall \bv\in\mathbb R^p.
	\end{align}
	We write $\bxi \sim SG_p(\tau)$. If $\bxi$ is standard 
	Gaussian then it is sub-Gaussian with parameter $1$. 
	Similarly, if $\bxi$ is centered and belongs almost surely 
	to the unit ball, then $\bxi$ is sub-Gaussian with 
	parameter 1. Let us describe now the set of data-generating 
	distributions that we consider in this section. 
	
	\begin{definition}\label{def:SGAC}
	We say that the joint distribution $\bfPn$ of the random 
	vectors $\bX_1,\ldots,\bX_n\in\mathbb{R}^p$ belongs to
	the sub-Gaussian model with adversarial contamination
	with mean $\bmu^*$, covariance matrix $\bSigma$ and 
	contamination rate $\varepsilon$, if there are independent
	random vectors $\bxi_i\sim SG_p(\tau)$ such that 
	\begin{align}
	    \Big|\Big\{i:=1,\ldots,n:\bX_i \neq \bmu^*+ \bSigma^{1/2}
	    \bxi_i\Big\}\Big|\le \varepsilon n.
	\end{align}
	We then write $\bfPn\in \textup{SGAC}(\bmu^*,\bSigma, 
	\varepsilon)$.
	\end{definition}
	
	It is clear that a Gaussian model with adversarial contamination
	defined in \Cref{def:1} is a particular case of the sub-Gaussian
	model with adversarial contamination. In other terms, the set
	\textup{SGAC}$(\bmu^*,\bSigma,\varepsilon)$ is strictly larger
	than the set \textup{GAC}$(\bmu^*,\bSigma,\varepsilon)$. 
	Nevertheless, as shows the result below, the risk bounds 
	established for the iteratively reweighted mean algorithms 
	remain valid uniformly over this extended class \textup{SGAC}$(\bmu^*,\bSigma,\varepsilon)$. 
	
    \begin{theorem}\label{th:2}
    Let $\hat\bmu_n^{\textup{IR}}$ be the iteratively reweighted 
    mean defined in~\Cref{IR:def} and in Algorithm~\ref{algo:a1}. 
    Let $\delta\in(4e^{-n},1)$ be a tolerance level. 
    There exists a constant $A_5$ depending only on the variance 
    proxy $\tau$ such that if $n\ge p\ge 2$ and $\varepsilon < 
    (5-\sqrt{5})/10$, then for every $\bmu^*\in\RR^p$ and every $\bfPn\in\textup{SGAC}(\bmu^*,\bSigma,\varepsilon)$, we have
    \begin{align}
        \bfP\Bigg(\|\hat\bmu_n^{\textup{IR}}-\bmu^*\|_2^2 
        \le \frac{A_5\|\bSigma\|^{1/2}_{\textup{op}}}{1-2
        \varepsilon - \sqrt{\varepsilon(1-\varepsilon)}}\,
        \bigg(\sqrt{\frac{p+\log(4/\delta)}{n}} + \varepsilon\sqrt{\log(1/\varepsilon)}\bigg)\Bigg) 
        \le 1 - 4\delta.
    \end{align}
    \end{theorem}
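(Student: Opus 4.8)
The plan is to retrace the proof of \Cref{th:1} (equivalently \Cref{fact:f4}), replacing every in-expectation bound by its high-probability counterpart valid for sub-Gaussian inliers. The skeleton is unchanged: from \eqref{main:ineq2} we have
\begin{align}
\|\hat{\bmu}_n^{\textup{IR}}-\bmu^*\|_2 \le \alpha_\varepsilon^K\,\|\GMmu-\bmu^*\|_2 + \frac{\tilde\Xi}{1-\alpha_\varepsilon},\qquad \tilde\Xi = \alpha_\varepsilon\big(G(\bw^*,\bmu^*)^{1/2}+\|\bar\bzeta_{\bw^*}\|_2\big)+\Xi,\nonumber
\end{align}
where now $\bzeta_i = \bSigma^{1/2}\bxi_i$ with $\bxi_i\sim SG_p(\tau)$. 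So it suffices to control, on an event of probability at least $1-4\delta$, the three quantities $\|\GMmu-\bmu^*\|_2$, $G(\bw^*,\bmu^*)^{1/2}$, and $\Xi$, and then invoke the choice of $K$ in \eqref{K} to kill the first term down to order $\varepsilon$.

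First I would establish the sub-Gaussian analogue of \Cref{lem:8}, namely $\|\GMmu-\bmu^*\|_2 \le c\,(1-2\varepsilon)^{-1}\sqrt{\|\bSigma\|_{\textup{op}}(p+\log(1/\delta))}$ with probability $1-\delta$; this follows from the standard deviation bound for the geometric median together with a sub-Gaussian concentration bound on $\max_{|I|\ge n(1-\varepsilon)}\|\bar\bzeta_{|I}\|_2$. Since the number of iterations $K$ in \eqref{K} was calibrated so that $2\alpha_\varepsilon^K\sqrt{\rSigma}\le\varepsilon(1-2\varepsilon)$, an extra $\sqrt{\log(4/\delta)}$ inside the square root only changes this to $\alpha_\varepsilon^K\|\GMmu-\bmu^*\|_2 \lesssim \sqrt{(p+\log(4/\delta))/n}+\varepsilon$ after a harmless adjustment, which is already of the target order. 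Second, for $G(\bw^*,\bmu^*)^{1/2}=\lambda_{\max,+}^{1/2}\big(\sum_{i\in\mathcal I}w_i^*(\bzeta_i^{\otimes2}-\bSigma)\big)$ I would use a sub-Gaussian matrix-concentration / Bernstein-type bound (in the spirit of \eqref{ineq:3}) to get, with probability $1-\delta$, $\bfE$-free control of order $\|\bSigma\|_{\textup{op}}\big(\sqrt{(p+\log(1/\delta))/n}+(p+\log(1/\delta))/n\big)$, hence $G(\bw^*,\bmu^*)^{1/2}\lesssim \|\bSigma\|_{\textup{op}}^{1/2}\big((p/n)^{1/4}+\sqrt{p/n}+\sqrt{\log(1/\delta)/n}\big)$.

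The main work is the sub-Gaussian, high-probability version of \Cref{proposition2}: one must show that
\begin{align}
\Xi = \sup_{\bw\in\mathcal W_n(\varepsilon)}\ \max_{|I|\ge n(1-\varepsilon)} R(\bzeta,\bw,I)\nonumber
\end{align}
is, with probability at least $1-\delta$, at most $A\|\bSigma\|_{\textup{op}}^{1/2}\big(\sqrt{(p+\log(1/\delta))/n}+\sqrt{\varepsilon}\,(p/n)^{1/4}+\varepsilon\sqrt{\log(1/\varepsilon)}\big)$. I expect this to be the hard part. The proof strategy mirrors the Gaussian case: decompose $R(\bzeta,\bw,I)$ into its three summands from \Cref{proposition1}, use part (iii) of \Cref{lemma0} to replace the supremum over the convex set $\mathcal W_n(\varepsilon)$ by a maximum over the finitely many uniform weight vectors $\bu^J$ with $|J|=n(1-\varepsilon)$, and bound each term by a union bound over the $\binom{n}{n\varepsilon}$-many choices of $J$ and of the outlier set $I^c$. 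The combinatorial factor contributes $\log\binom{n}{n\varepsilon}\lesssim n\varepsilon\log(1/\varepsilon)$, which after division by $n$ and taking square roots is exactly the source of the $\varepsilon\sqrt{\log(1/\varepsilon)}$ term; here the Gaussian concentration inequalities used in the original proof must be swapped for sub-Gaussian ones — concentration of $\|\bar\bzeta_{\bw_{|I}}\|_2$ around its mean and of $\lambda_{\max,+}\big(\sum_{i\in I}(\bw_{|I})_i(\bSigma-\bzeta_i\bzeta_i^\top)\big)$, both available from standard sub-Gaussian/sub-exponential tail bounds with constants depending only on $\tau$. Carrying the extra $\log(1/\delta)$ additively through these tail bounds (rather than needing it for each of exponentially many events) is what keeps the final probability at $1-4\delta$ rather than something degenerate; the four ``bad'' events — one for the geometric median, one for $G(\bw^*,\bmu^*)$, one for $\|\bar\bzeta_{\bw^*}\|_2$, one for $\Xi$ — are then combined by a union bound, the hypothesis $\delta>4e^{-n}$ ensuring $\log(4/\delta)\le n$ so all the $\sqrt{(p+\log(4/\delta))/n}$ terms stay $O(1)$ and can be absorbed. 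Finally, dividing $\|\bSigma\|_{\textup{op}}$ out at the start (as in the proof of \Cref{fact:f4}, by rescaling $\bX_i$) reduces everything to $\|\bSigma\|_{\textup{op}}=1$, and collecting the three bounds into \eqref{main:ineq2} yields the stated inequality with $A_5$ depending only on $\tau$.
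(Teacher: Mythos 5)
Your proposal follows essentially the same route as the paper's own proof (given in the appendix): keep the deterministic recursion behind \eqref{main:ineq2}, replace each in-expectation building block by a high-probability sub-Gaussian analogue (the paper's \Cref{lem:9}, \Cref{lemma2bis}, \Cref{lemma3:bis} and \Cref{proposition3}, based on Vershynin's singular-value bounds and the sub-Gaussian Koltchinskii--Lounici inequality of \Cref{lem:smax:bis}), reduce the suprema over $\mathcal W_n(\varepsilon)$ to maxima over uniform weight vectors and pay $\log\binom{n}{m}\le m\log(ne/m)$ in the union bound---which is indeed the source of the $\varepsilon\sqrt{\log(1/\varepsilon)}$ term---and finally combine the handful of bad events, with $\delta>4e^{-n}$ ensuring $\log(4/\delta)\le n$. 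One point of precision: since the conditional weights $\bw_{|I}$ are not linear in $\bw$, the reduction to uniform vectors goes through claim iv) of \Cref{lemma0} first (giving $\bw_{|I}\in\mathcal W_{n,n-2n\varepsilon}$), and only then through the convexity argument of claim iii); your sketch glosses over this, but it is exactly the mechanism the paper uses and costs nothing.

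The one step that does not go through as written is your treatment of the initialization term. You claim $\|\GMmu-\bmu^*\|_2\lesssim (1-2\varepsilon)^{-1}\sqrt{\|\bSigma\|_{\textup{op}}(p+\log(1/\delta))}$ with probability $1-\delta$ and call the multiplication by $\alpha_\varepsilon^K$ a ``harmless adjustment''. But the choice \eqref{K} only guarantees $\alpha_\varepsilon^K\le \varepsilon(1-2\varepsilon)/(2\sqrt{\rSigma})$, so your bound leaves a residual of order $\varepsilon\sqrt{\log(1/\delta)/p}$; taking $p=2$, $\delta\asymp e^{-n}$ and $\varepsilon$ a fixed constant---all allowed by the statement---this residual is of order $\varepsilon\sqrt{n}$, which is not dominated by $\sqrt{(p+\log(4/\delta))/n}+\varepsilon\sqrt{\log(1/\varepsilon)}$. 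The fix is to make the $\delta$-dependence enter at scale $n^{-1/2}$, as the paper does: use the deterministic bound of \Cref{lem:8}, $\|\GMmu-\bmu^*\|_2\le \frac{2}{n(1-2\varepsilon)}\sum_{i=1}^n\|\bzeta_i\|_2$, together with $\frac1n\sum_{i=1}^n\|\bzeta_i\|_2\le \sqrt{p}+\sqrt{p/n}\,\big\|\bxi_{1:n}\bxi_{1:n}^\top-\bfI_p\big\|_{\textup{op}}^{1/2}$ and \Cref{lem:smax:bis}, so that with probability $1-\delta$ the average is at most $C_\tau\sqrt{p}\,\big(1+\sqrt{\log(1/\delta)/n}\big)$; the condition $\log(1/\delta)\le n$ then turns the parenthesis into a constant, and $\alpha_\varepsilon^K\sqrt{p}$ is of order $\varepsilon$ by the choice of $K$, exactly as in the paper's argument. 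With this substitution, your proof coincides with the paper's.
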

    
    The proof of this theorem is postponed to the supplementary 
    material. Let us just mention that \cite[Section 1.2]{Chengetal19}
    claim that the rate $\sqrt{p/n} + \varepsilon\sqrt{\log(1/
    \varepsilon)}$ is optimal for sub-Gaussian distributions, meaning 
    that, unlike the Gaussian case, the $\sqrt{\log(1/\varepsilon)}$ 
    factor cannot be removed. A formal proof of this fact can be found in the last remark of Section 2 in \citep{LugMen20}. 
    
    \def\ellmax{\ell_{\max}}
    \subsection{Adaptation to unknown contamination rate $\varepsilon$}\label{ssec:Lepski} 
    An appealing feature of the risk bounds that hold with 
    high probability is that they allow us to apply Lepski's 
    method \citep{Lepski, LepskiSpok} for obtaining an adaptive 
    estimator with respect to $\varepsilon$. The obtained 
    adaptive estimator enjoys all the five properties enumerated 
	in \Cref{sec:method} except the asymptotic efficiency, 
	since the adaptation results in an inflation of the risk 
	bound by a factor 3. The precise description of the algorithm, 
	already used in the framework of robust estimation by 
	\cite{coldal}, is presented below. We will denote by
	$\mathbb B(\bmu,r)$ the ball with center $\bmu$ and radius
	$r$ in the Euclidean space $\mathbb R^p$.
	
	\begin{definition}\label{def:AIR}
	We choose a geometric grid $\varepsilon_\ell = a^\ell
	\varepsilon_0$,	$\ell = 1,2,\ldots,\ellmax$, of possible 
	values of the contamination rate. Here, $a\in(0,1)$ is a real 
	number, $\varepsilon_0 = (5-\sqrt{5})/10$ and
	$\ellmax = [0.5\log_a (p/n)]$. For each $\ell=1,\ldots,
	\ellmax$, we denote by $\hat\bmu_n^{\textup{IR}}(
	\varepsilon_\ell)$  the iteratively reweighted mean computed
	for $\varepsilon = \varepsilon_\ell$, see 
	Algorithm~\ref{algo:a1}, and we set
	\begin{align}
	    R_\delta(z) = \frac{A_5\|\bSigma\|^{1/2}_{\textup{op}}}
	    {1-2z - \sqrt{z(1-z)}}\,
        \bigg(\sqrt{\frac{p+\log(4\ellmax/\delta)}{n}} + 
        z\sqrt{\log(1/z)}\bigg), \qquad z\in[0, \varepsilon_0),
	\end{align}
	where $\delta\in(0,1)$ is a tolerance level and $A_5$
	is the constant from \Cref{th:2}. The adaptively chosen
	iteratively reweighted mean estimator $\hat\bmu_n^{\textup{AIR}}$
	is defined by $\hat\bmu_n^{\textup{AIR}} = \hat\bmu_n^{\textup{IR}}
	(\varepsilon_{\hat\ell})$ where
	\begin{align}
	    \hat\ell = \max\Big\{\ell\le \ellmax : \bigcap_{j=1}^\ell 
	    \mathbb B\big(\hat\bmu_n^{\textup{IR}}
	(\varepsilon_j);R_\delta(\varepsilon_j)\big)\neq \varnothing\Big\}.
	\end{align}
	\end{definition}
	
	The estimator $\hat\bmu_n^{\textup{AIR}}$ can be computed without 
	the knowledge of the true contamination rate $\varepsilon$. 
	Furthermore, its computational complexity nearly of the same
	order as the complexity of computing a single instance of
	the iteratively reweighted mean as defined by Algorithm~\ref{algo:a1}.
	Indeed, to compute $\hat\bmu_n^{\textup{AIR}}$, one needs to
	apply Algorithm~\ref{algo:a1} at most $\ellmax = [0.5
	\log_a (p/n)]$ times, and to solve a second-order cone program
	for checking whether the intersection of a small number of
	balls is empty. The next theorem, proved in the supplementary 
	material, shows that the estimation error of this estimator 
	$\hat\bmu_n^{\textup{AIR}}$ is of the optimal rate, up to a
	logarithmic factor. To the best of our knowledge, this is the
	first result of this kind in the literature.

	\begin{theorem}\label{th:3}
	Let $\hat\bmu_n^{\textup{AIR}}$ be the estimator 
    defined in~\Cref{def:AIR}. Let $\delta\in(4e^{-n},1)$ be a 
    tolerance level. Let $n\ge p\ge 2$ and $\varepsilon \le 
    (5-\sqrt{5})a/10$, where $a\in(0,1)$ is the parameter used 
    in \Cref{def:AIR}. Then, for every $\bmu^*\in\RR^p$ and every $\bfPn\in\textup{SGAC}(\bmu^*,\bSigma,\varepsilon)$, we have
    \begin{align}
        \bfP\Bigg(\|\hat\bmu_n^{\textup{AIR}}-\bmu^*\|_2^2 
        \le \frac{3A_5\|\bSigma\|^{1/2}_{\textup{op}}}{a-2
        \varepsilon - \sqrt{\varepsilon(a-\varepsilon)}}\,
        \bigg(\sqrt{\frac{p+\log(4\ellmax/\delta)}{n}} + \varepsilon\sqrt{\log(1/\varepsilon)}\bigg)\Bigg) 
        \le 1 - 4\delta.
    \end{align}
	\end{theorem}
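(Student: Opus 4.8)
The plan is to run a standard Lepski-type argument on top of the high-probability guarantee of \Cref{th:2}. First I would fix the true contamination rate $\varepsilon$ and locate it on the geometric grid: since $\varepsilon\le (5-\sqrt{5})a/10 = a\varepsilon_0$, there is an index $\ell^*\le \ellmax$ such that $\varepsilon_{\ell^*}\le \varepsilon < \varepsilon_{\ell^*-1} = \varepsilon_{\ell^*}/a$, hence $\varepsilon \le \varepsilon_{\ell^*}/a$ and, crucially, $\varepsilon \le \varepsilon_{\ell^*}$ fails but $\varepsilon_j \ge \varepsilon$ for all $j\le \ell^*$ only after we note that the grid is \emph{decreasing} in $\ell$; so I would pick $\ell^*$ to be the largest index with $\varepsilon_{\ell^*}\ge \varepsilon$. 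For every $j\le \ell^*$ we then have $\varepsilon_j\ge\varepsilon$, so the data, which is $\textup{SGAC}(\bmu^*,\bSigma,\varepsilon)$, is a fortiori in $\textup{SGAC}(\bmu^*,\bSigma,\varepsilon_j)$, and \Cref{th:2} applies to $\hat\bmu_n^{\textup{IR}}(\varepsilon_j)$ with tolerance $\delta/\ellmax$ in place of $\delta$ (this is why $R_\delta$ carries $\log(4\ellmax/\delta)$). A union bound over the at most $\ellmax$ indices $j\le\ell^*$ then gives that, on an event $\mathcal E$ of probability at least $1-4\delta$, simultaneously $\|\hat\bmu_n^{\textup{IR}}(\varepsilon_j)-\bmu^*\|_2\le R_\delta(\varepsilon_j)$ for all $j\le\ell^*$.

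On the event $\mathcal E$, the true mean $\bmu^*$ lies in every ball $\mathbb B\big(\hat\bmu_n^{\textup{IR}}(\varepsilon_j);R_\delta(\varepsilon_j)\big)$, $j\le\ell^*$, so their intersection is nonempty; by the definition of $\hat\ell$ as the largest index with the nested-intersection property, this forces $\hat\ell\ge\ell^*$. Next I would exploit the non-emptiness at level $\hat\ell$: there is a point $\bmu$ in $\bigcap_{j=1}^{\hat\ell}\mathbb B(\hat\bmu_n^{\textup{IR}}(\varepsilon_j);R_\delta(\varepsilon_j))$; since $\ell^*\le\hat\ell$, both $\bmu\in\mathbb B(\hat\bmu_n^{\textup{IR}}(\varepsilon_{\ell^*});R_\delta(\varepsilon_{\ell^*}))$ and $\bmu\in\mathbb B(\hat\bmu_n^{\textup{IR}}(\varepsilon_{\hat\ell});R_\delta(\varepsilon_{\hat\ell}))$. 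The triangle inequality, applied through $\bmu$ and through the point $\hat\bmu_n^{\textup{IR}}(\varepsilon_{\ell^*})$ (which is within $R_\delta(\varepsilon_{\ell^*})$ of $\bmu^*$ on $\mathcal E$), yields
\begin{align}
\|\hat\bmu_n^{\textup{AIR}}-\bmu^*\|_2
= \|\hat\bmu_n^{\textup{IR}}(\varepsilon_{\hat\ell})-\bmu^*\|_2
\le R_\delta(\varepsilon_{\hat\ell}) + R_\delta(\varepsilon_{\ell^*}) + R_\delta(\varepsilon_{\ell^*}).
\end{align}
Since $\varepsilon_{\hat\ell}\le\varepsilon_{\ell^*}$ and $z\mapsto R_\delta(z)$ is increasing on $[0,\varepsilon_0)$ (the prefactor $1/(1-2z-\sqrt{z(1-z)})$ and the term $z\sqrt{\log(1/z)}$ are both increasing there), we bound all three terms by $R_\delta(\varepsilon_{\ell^*})$, giving $\|\hat\bmu_n^{\textup{AIR}}-\bmu^*\|_2\le 3R_\delta(\varepsilon_{\ell^*})$.

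Finally I would convert $3R_\delta(\varepsilon_{\ell^*})$ into the stated bound. Using $\varepsilon\le\varepsilon_{\ell^*}\le\varepsilon/a$: for the $\sqrt{(p+\log(4\ellmax/\delta))/n}$ summand this factor of $1/a$ is harmless up to constants, while for the $\varepsilon_{\ell^*}\sqrt{\log(1/\varepsilon_{\ell^*})}$ summand one checks $\varepsilon_{\ell^*}\sqrt{\log(1/\varepsilon_{\ell^*})}\le a^{-1}\varepsilon\sqrt{\log(1/\varepsilon)}$ up to a universal constant (since $t\sqrt{\log(1/t)}$ is, up to constants, increasing near $0$ and $\varepsilon_{\ell^*}\le\varepsilon/a$). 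For the prefactor, $1-2\varepsilon_{\ell^*}-\sqrt{\varepsilon_{\ell^*}(1-\varepsilon_{\ell^*})}\ge a-2\varepsilon-\sqrt{\varepsilon(a-\varepsilon)}$ up to rearranging, using $\varepsilon_{\ell^*}\le\varepsilon/a$; matching the exact denominator $a-2\varepsilon-\sqrt{\varepsilon(a-\varepsilon)}$ written in the statement is the one genuinely fiddly algebraic step, and I expect it is obtained by substituting $\varepsilon_{\ell^*} = \varepsilon/a$ as the worst case and clearing the factor $a$. Collecting these estimates produces the claimed inequality with probability at least $1-4\delta$. The main obstacle is bookkeeping rather than conceptual: getting the comparison between $R_\delta(\varepsilon_{\ell^*})$ and the target expression to land on exactly the denominator $a-2\varepsilon-\sqrt{\varepsilon(a-\varepsilon)}$, and making sure the factor-$3$ inflation and the $\log\ellmax$ in the confidence budget are tracked consistently.
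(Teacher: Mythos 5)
Your proposal is correct and follows essentially the same argument as the paper's proof: locate the grid point $\varepsilon_{\ell^*}\in[\varepsilon,\varepsilon/a]$, apply \Cref{th:2} at tolerance $\delta/\ellmax$ with a union bound over $j\le\ell^*$, deduce $\hat\ell\ge\ell^*$ on that event, and bound $\|\hat\bmu_n^{\textup{AIR}}-\bmu^*\|_2\le 3R_\delta(\varepsilon_{\ell^*})\le 3R_\delta(\varepsilon/a)$ via the triangle inequality through a point of the nonempty intersection and the monotonicity of $R_\delta$. The one step you left hedged in fact works out exactly rather than up to constants: $1-2(\varepsilon/a)-\sqrt{(\varepsilon/a)(1-\varepsilon/a)}=\bigl(a-2\varepsilon-\sqrt{\varepsilon(a-\varepsilon)}\bigr)/a$, and the resulting factor $a$ multiplies both summands of $R_\delta(\varepsilon/a)$, giving $a\sqrt{(p+\log(4\ellmax/\delta))/n}\le\sqrt{(p+\log(4\ellmax/\delta))/n}$ and $a\cdot(\varepsilon/a)\sqrt{\log(a/\varepsilon)}\le\varepsilon\sqrt{\log(1/\varepsilon)}$, which lands precisely on the stated bound with constant $3A_5$.
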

	
	The breakdown point of the adaptive estimator $\hat\bmu_n^
	{\textup{AIR}}$ inferred from the last theorem is slightly 
	smaller than the one of $\hat\bmu_n^{\textup{IR}}$. Indeed,
	there is a factor $a<1$ between these two quantities. 
	Note that one can choose $a$ to be very close to one. 
	The only drawback of choosing $a$ too close to one is 
	the higher computational complexity of the resulting 
	estimator. 
	
		\begin{figure*}
	\begin{center}
	\begin{minipage}{0.8\linewidth}
			\begin{alg}{Iteratively reweighted mean estimator (known 
			$\bs\varepsilon$, unknown $\bSigma$)}{a2}		
				\begin{algorithmic}
				\small
				\STATE {\bfseries Input:} data $\bX_1,\ldots,\bX_n\in\RR^p$,  
				contamination rate $\varepsilon$ 
				\STATE {\bfseries Output:} parameter estimate $\hat\bmu_n^{\textup{IR}}$
				\STATE {\bfseries Initialize:} compute $\hat\bmu^0$ as a minimizer of 
				$\sum_{i=1}^n \|\bX_i-\bmu\|_2$\\[7pt]
				\STATE  Set $K = 0\vee\Big\lceil \frac{\log(4p) - 2\log(\varepsilon({1-2\varepsilon}))}
				{2\log(1-2\varepsilon) - \log \varepsilon -\log(1-\varepsilon)}
				\Big\rceil$.\\[7pt]
				\STATE {{\bf For} $k=1:K$}
				\STATE \qquad {Compute current weights:}\\
				\qquad\quad $\ds	\bw \in \mathop{\textup{arg\,min}}_{(n-n\varepsilon)\|\bw\|_\infty\le 1}
					\lambda_{\max}\bigg(\sum_{i=1}^n  w_i (\bs X_i-\hat\bmu^{k-1})
					^{\otimes 2}\bigg).		
				$
				\STATE \qquad{Update the estimator: $\hat{\bmu}^k = \sum_{i=1}^n w_i\bX_i$}.
				\STATE {\bfseries EndFor}
				\STATE {{\bf Return} $\hat\bmu^{K}$}.
				\end{algorithmic}		
			\end{alg}
		\end{minipage}
	\end{center}
\end{figure*}%

    \subsection{Extension to unknown covariance $\bfSigma$}
    
    The iteratively reweighted mean estimator, as defined
    in Algorithm~\ref{algo:a1}, requires the knowledge of
    the covariance matrix $\bSigma$. Let us briefly discuss
    what happens when this matrix is unknown, by considering
    two qualitatively different situations. 
    
    The first situation is when the covariance matrix is 
    isotropic, $\bSigma = \sigma^2\bfI_p$, with unknown 
    $\sigma>0$. One can easily check that all the claims
    of \Cref{sec:method} and \Cref{sec:formal} hold true
    for a slight modification of $\hat\mu_n^{\textup{IR}}$
    obtained by minimizing $\lambda_{\max}(\sum_i w_i(\bX_i
    -\hat\mu^{k-1})^{\otimes 2}-\bSigma)$ instead of 
    $G(\bw,\hat\bmu^{k-1})$ in \eqref{iter:1}. For isotropic
    matrices $\bSigma$, we have
    \begin{align}
        \text{arg}\min_{\bw}\lambda_{\max}\bigg
        (\sum_i w_i(\bX_i-\hat\mu^{k-1})^{\otimes 2}-\sigma^2
        \bfI_p\bigg) = 
        \text{arg}\min_{\bw}\lambda_{\max}\bigg
        (\sum_i w_i(\bX_i-\hat\mu^{k-1})^{\otimes 2}\bigg),
    \end{align}
    which means that the resulting value is independent of 
    $\bSigma$. Therefore, in this first situation, the 
    modified estimator defined in Algorithm~\ref{algo:a2} 
    satisfies inequalities of \Cref{th:1} and \Cref{th:2}. 
    
    The second situation is when $\bSigma$ is unknown and
    arbitrary. In this case, to the best of our knowledge, 
    there is no known computationally tractable estimator 
    of $\bmu^*$ achieving a rate faster than $\sqrt{p/n} +  
    \sqrt{\varepsilon}$. It turns out that a slightly modified
    version of the iteratively reweighted mean estimator
    defined in Algorithm~\ref{algo:a2} achieves this rate
    as well. The formal statement of the result entailing 
    this claim is presented below. 
    \begin{theorem}\label{th:4}
    Let $\hat\bmu_n^{\textup{IR}}$ be the iteratively reweighted 
    mean defined in Algorithm~\ref{algo:a2}. 
    Let $\delta\in(4e^{-n},1)$ be a tolerance level. 
    There exists a constant $A_5$ depending only on the variance 
    proxy $\tau$ such that if $n\ge p\ge 2$ and $\varepsilon < 
    (5-\sqrt{5})/10$, then for every $\bmu^*\in\RR^p$ and every $\bfPn\in\textup{SGAC}(\bmu^*,\bSigma,\varepsilon)$, we have
    \begin{align}
        \bfP\Bigg(\|\hat\bmu_n^{\textup{IR}}-\bmu^*\|_2^2 
        \le \frac{A_5\|\bSigma\|^{1/2}_{\textup{op}}}{1-2
        \varepsilon - \sqrt{\varepsilon(1-\varepsilon)}}\,
        \bigg(\sqrt{\frac{p+\log(4/\delta)}{n}} + 3\sqrt{\varepsilon}\bigg)\Bigg) \le 1 - 4\delta.
    \end{align}
    \end{theorem}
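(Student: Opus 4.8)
The plan is to follow the proof of \Cref{th:2} almost verbatim, tracking only the changes caused by dropping $\bSigma$ from the cost function of Algorithm~\ref{algo:a1} to obtain Algorithm~\ref{algo:a2}. As in that proof, I would first reduce to $\|\bSigma\|_{\textup{op}}=1$ (so that $\rSigma=\tr(\bSigma)\le p$) by rescaling the data, which is legitimate because Algorithm~\ref{algo:a2} is scale-equivariant. Keeping the notation $G(\bw,\bmu)=\lambda_{\max,+}\big(\sum_i w_i(\bX_i-\bmu)^{\otimes 2}-\bSigma\big)$, I introduce the modified cost $\widetilde G(\bw,\bmu)=\lambda_{\max}\big(\sum_i w_i(\bX_i-\bmu)^{\otimes 2}\big)$ that is actually minimized in Algorithm~\ref{algo:a2}. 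Because $\bSigma\succeq 0$, one has $G(\bw,\bmu)\le\widetilde G(\bw,\bmu)\le G(\bw,\bmu)+\|\bSigma\|_{\textup{op}}$ for every $(\bw,\bmu)$, and the same semidefinite decomposition that gives \eqref{eq:minG} shows that $\bar\bX_\bw$ minimizes $\widetilde G(\bw,\cdot)$ over $\bmu\in\RR^p$.

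Next, I would rerun the chain \eqref{eq:3} with $\widetilde G$ in place of $G$ wherever the optimality of the computed weights or the identity \eqref{eq:minG} is used, while still using \Cref{proposition1} --- hence $G$ and the \emph{true} $\bSigma$ --- to pass from the covariance error to the mean error. For the step-$k$ minimizer $\hat\bw^k$, \eqref{eq:fund3} still gives $\|\hat\bmu^k-\bmu^*\|_2\le\alpha_\varepsilon\,G(\hat\bw^k,\bar\bX_{\hat\bw^k})^{1/2}+\Xi$, and now $G(\hat\bw^k,\bar\bX_{\hat\bw^k})\le\widetilde G(\hat\bw^k,\bar\bX_{\hat\bw^k})\le\widetilde G(\hat\bw^k,\hat\bmu^{k-1})\le\widetilde G(\bw^*,\hat\bmu^{k-1})$, the last step because $\bw^*$ is feasible ($|\mathcal I|\ge n(1-\varepsilon)$). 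Decomposing $\sum_i w^*_i(\bX_i-\hat\bmu^{k-1})^{\otimes 2}$ around $\bar\bX_{\bw^*}$ and using $\widetilde G(\bw^*,\bar\bX_{\bw^*})\le\widetilde G(\bw^*,\bmu^*)\le G(\bw^*,\bmu^*)+\|\bSigma\|_{\textup{op}}$, the chain \eqref{eq:3} closes into the contraction
\begin{align*}
\|\hat\bmu^k-\bmu^*\|_2\le\alpha_\varepsilon\|\hat\bmu^{k-1}-\bmu^*\|_2+\tilde\Xi',\qquad \tilde\Xi':=\tilde\Xi+\alpha_\varepsilon\|\bSigma\|_{\textup{op}}^{1/2},
\end{align*}
where $\tilde\Xi$ is the random variable of \eqref{main:ineq2}. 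Unfolding this recursion over the $K$ iterations of Algorithm~\ref{algo:a2} yields, as in \eqref{main:ineq2}, $\|\hat\bmu_n^{\textup{IR}}-\bmu^*\|_2\le\alpha_\varepsilon^K\|\GMmu-\bmu^*\|_2+\tilde\Xi'/(1-\alpha_\varepsilon)$.

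It then remains to bound the right-hand side on an event of probability at least $1-4\delta$, for which I would invoke the sub-Gaussian, high-probability estimates used in the proof of \Cref{th:2}: the sub-Gaussian analogue of \Cref{proposition2} for $\Xi$, a Koltchinskii--Lounici-type bound for $G(\bw^*,\bmu^*)$, concentration of the inlier sample mean for $\|\bar\bzeta_{\bw^*}\|_2$, and the high-probability geometric-median bound $\|\GMmu-\bmu^*\|_2\lesssim\sqrt p/(1-2\varepsilon)$. Exactly as in \eqref{ineq:5}, these give $\tilde\Xi\le C\big(\sqrt{(p+\log(4/\delta))/n}+\varepsilon\sqrt{\log(1/\varepsilon)}\big)$, while the choice of $K$ in Algorithm~\ref{algo:a2} --- which uses $\log(4p)$ in place of $\log(4\rSigma)$ --- forces $\alpha_\varepsilon^K\|\GMmu-\bmu^*\|_2\le\varepsilon$. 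Using $\tfrac{1}{1-\alpha_\varepsilon}=\tfrac{1-2\varepsilon}{1-2\varepsilon-\sqrt{\varepsilon(1-\varepsilon)}}$, $\tfrac{\alpha_\varepsilon\|\bSigma\|_{\textup{op}}^{1/2}}{1-\alpha_\varepsilon}\le\tfrac{\sqrt\varepsilon}{1-2\varepsilon-\sqrt{\varepsilon(1-\varepsilon)}}$, and $\varepsilon\sqrt{\log(1/\varepsilon)}\le\sqrt\varepsilon$ for $\varepsilon\in(0,1)$, the three $O(\sqrt\varepsilon)$ contributions --- from the initialization term, from $\varepsilon\sqrt{\log(1/\varepsilon)}$ in $\tilde\Xi$, and from the new term $\alpha_\varepsilon\|\bSigma\|_{\textup{op}}^{1/2}$ --- collapse into the bracketed $3\sqrt\varepsilon$, the remaining terms into $\sqrt{(p+\log(4/\delta))/n}$, and restoring the $\|\bSigma\|_{\textup{op}}^{1/2}$ normalization gives the claim. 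The only genuinely new point, and the step I expect to require the most care, is that \Cref{proposition1} keeps using the true $\bSigma$: it is precisely the inequality $G\le\widetilde G$ that guarantees that minimizing the $\bSigma$-free objective still drives $G(\hat\bw^k,\bar\bX_{\hat\bw^k})$ down, and the whole price of ignoring $\bSigma$ is the single additive term $\alpha_\varepsilon\|\bSigma\|_{\textup{op}}^{1/2}$ --- which is exactly what degrades the second-term rate from $\varepsilon\sqrt{\log(1/\varepsilon)}$ to $\sqrt\varepsilon$.
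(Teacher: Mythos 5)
Your proposal is correct and follows essentially the same route as the paper's own proof: rerun the Theorem 2 argument, use that the $\bSigma$-free objective of Algorithm 2 sandwiches $G$ (i.e.\ $G\le\widetilde G\le G+\|\bSigma\|_{\textup{op}}$) together with the optimality of $\hat\bw^k$ for $\widetilde G$, absorb the resulting extra additive term $\alpha_\varepsilon\|\bSigma\|_{\textup{op}}^{1/2}$ into the recursion, and conclude with the same high-probability sub-Gaussian bounds, collecting the $O(\sqrt{\varepsilon})$ contributions (initialization, $\varepsilon\sqrt{\log(1/\varepsilon)}$, and the new $\alpha_\varepsilon$ term) into the $3\sqrt{\varepsilon}$ of the statement. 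This is exactly the paper's argument, up to minor reordering of where the comparison between $G$ and $\widetilde G$ is invoked.
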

    
    The proof of this theorem is deferred to the supplementary
    material. One can use \Cref{th:4} to construct an adaptive
    (with respect to $\varepsilon$) estimator of $\bmu^*$ in
    the case of unknown $\bSigma$ using Lepski's method detailed
    in \Cref{ssec:Lepski}. For this construction, it suffices 
    to know an upper bound $\sigma_{\max}$ on the operator 
    norm $\|\bSigma\|^{1/2}_{\textup{op}}$. The resulting 
    estimator has an error of the order $\sigma_{\max}(
    \sqrt{p/n} + \sqrt{\varepsilon})$. 
    
    One can also consider intermediate cases, in which the
    covariance matrix is not arbitrary but has a more general
    form than the simple isotropic one. In such a situation, 
    it might be of interest to extend the method proposed in 
    Algorithm~\ref{algo:a1} by using an initial estimator of 
    $\bSigma$ and	by updating its 
    value at each step. Indeed, when a weight vector is computed, 
    it can be used for updating not only the mean but also the 
	covariance matrix. The study of this estimator is left for
	future research.
    
    \section{Empirical results}\label{sec:empiric}

    This section showcases empirical results obtained by 
    applying the iteratively reweighted mean estimator 
    described in Algorithm~\ref{algo:a1} to synthetic 
    data sets. We stress right away that there are multiple 
    ways of solving the optimization problem involved in 
    Algorithm~\ref{algo:a1}, and the implementation we used 
    in our experiments is not the most efficient one. As 
    already mentioned, the aforementioned optimization 
    problem can be seen as a semi-definite program 
    and out-of-shelf algorithm can be applied to solve it.  
    We implemented this approach in R using the MOSEK 
    solver. All the results reported in this section 
    are obtained using this implementation. We are 
    currently working on an improved implementation using 
    the dual sub-gradient algorithm of \citep{cox2014dual}.

    We applied Algorithm \ref{algo:a1} to $\bX_1, \dots, 
    \bX_n$ from $\bfPn \in \textup{GAC}(\bmu^*,\bSigma,
    \varepsilon)$ with various types of contamination 
    schemes. In the numerical experiments below, we 
    illustrate (a) the evolution of the estimation error 
    along the iterations, (b) the properties related to 
    the theoretical breakdown point and (c) the performance 
    of the estimator obtained from Algorithm \ref{algo:a1} 
    as compared to some simple estimators of the mean and 
    to the oracle. In this section, the error of estimation
    is understood as the Euclidean distance between the
    estimated mean and its true value.

    Notice that, due to the equivariance stated in 
    \Cref{fact:f2}, it is sufficient to take as the true 
    target mean vector $\bmu^*$ the zero vector 
    $\mathbf{0}_p$ and as $\bSigma$ any  diagonal matrix 
    with nonnegative entries.  We consider the following 
    two schemes of outlier generation:
    \begin{itemize}
    \item {\textbf{Contamination by ``smallest'' eigenvector:}} Sample $n$ i.i.d. observations $\bY_1, \dots, \bY_n$ drawn from  $\mathcal{N}(\mathbf{0}_p, \bfI_p)$ and compute the smallest eigenvalue $\lambda_p$ and the corresponding eigenvector $\bv_p$ of the sample covariance matrix, defined as
    \begin{align}
        \widehat{\bSigma}_n = \frac 1 n \sum_{i=1}^n (\bY_i - 
        \overline{\bY}_n)^{\otimes 2},
    \end{align}
    where $\overline{\bY}_n \stackrel{\text{def}}{=} (\bY_1 + \dots + \bY_n) / n$. Choose the  ${n \varepsilon}$ observations from $\bY_1, \dots, \bY_n$ that have the highest (in absolute value) correlation coefficient with $\bv_p$ and replace them by a vector proportional to $\bv_p$ with proportionality coefficient equal to $\sqrt{p}$.
    \item \textbf{Uniform outliers:} Sample $n$ observations according to this model 
    \begin{align}
        \bY_i = \btheta_i + \bxi_i, \text{ with } \bxi_i \stackrel{\text{i.i.d.}}{\sim} \mathcal{N}(\mathbf{0}_p, \bfI_p) \text{ for } ~ i = 1, \dots, n,
    \end{align}
    where $\btheta_i$s are all-zero vectors for $n(1-\varepsilon)$ observations $i \in \mathcal I = \{1,\ldots,n (1 - \varepsilon)\}$, while for the indices $i \not\in \mathcal{I}$ we have $\| \btheta_i \|_2 \neq 0$. We take the values of $\{\btheta_i\}_{i\in\mathcal{O}}$ to be i.i.d. from uniform distribution, \textit{i.e.}, for $i \not\in \mathcal{I}$ we take $\{\btheta_i^j\}_{j=1}^p \stackrel{\text{i.i.d.}}{\sim}\mathcal{U}[a, b]$. We took different values of $a$ and $b$ in different experiments reported below. 
\end{itemize}

\subsection{Improvement along iterations}

In this experiment we show the improvement of the estimation error along the iterations. The data for this experiment were generated according to the second contamination scheme mentioned above with $a=0.5$ and $b=2$. In \Cref{fig:boxplot}, we drew the logarithm of the estimation error (i.e., the risk between $\hat\bmu_n$ and $\bmu^*$) as a function of the iteration number. The results are obtained by averaging over 50 independent repetitions.  We observe that the error decreases very fast during the first iteration and remains almost constant during the rest of time. 
As a matter of fact, in order to speed-up the procedure, we can 
stop the iterations if the current weights $\bw$ satisfy
$\lambda_{\max}(\sum_i w_i (\bX_i -\bar\bX_{\bw})^{\otimes 2}
-\bSigma)\le \varepsilon$. It is easy to check that this 
modified estimator still possesses all the desirable properties 
described in previous sections. 

\begin{figure}
\begin{minipage}[b]{0.46\textwidth}
    \centering
    \includegraphics[origin=c, width=1.1\textwidth]{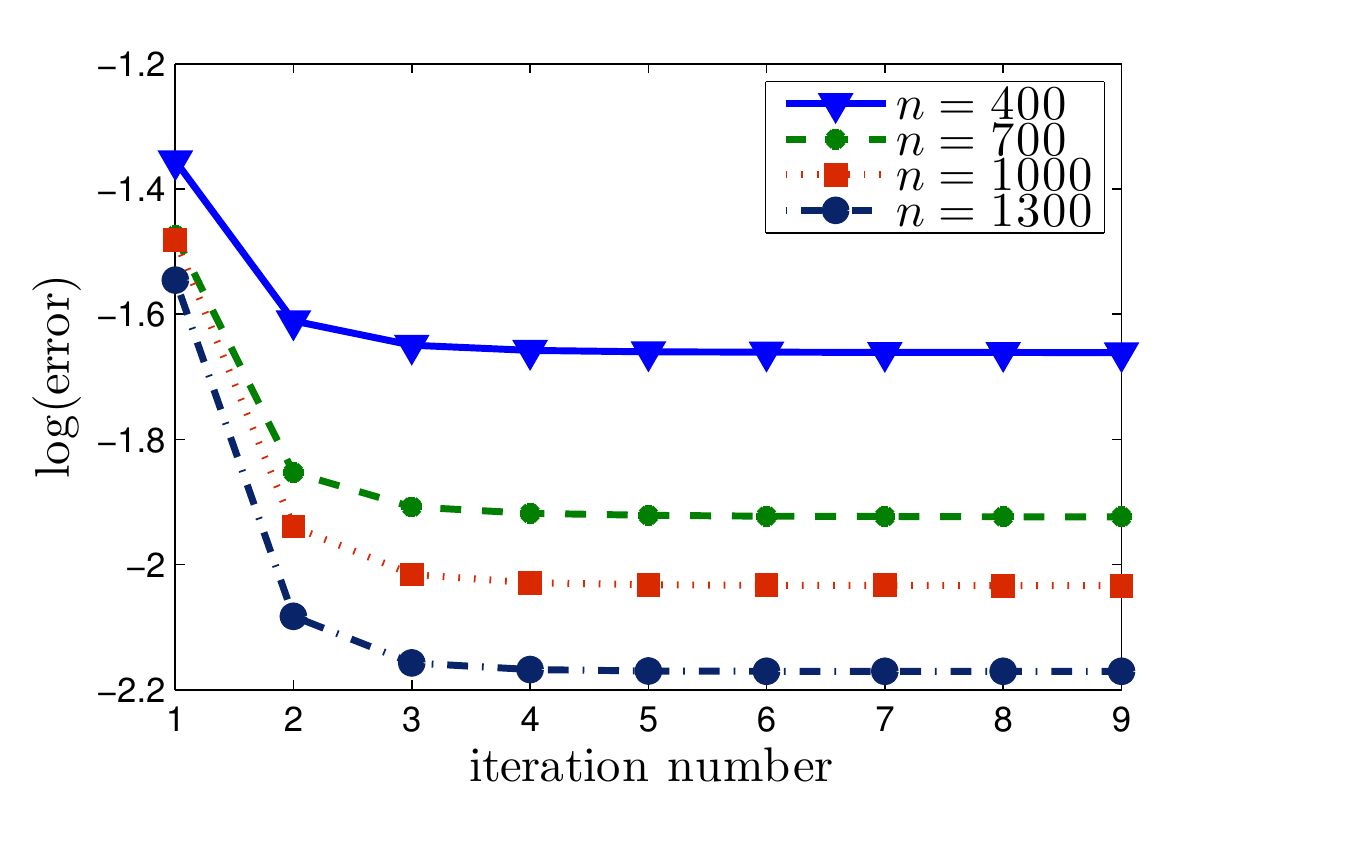}
    \vspace{-1cm}
    \caption{The decay of the error along the iterations for $p = 9$, $\varepsilon=0.2$ and different values of $n$. The contamination scheme is ``uniform outliers'' with $(a,b) = (0.5,2)$. }
    \label{fig:boxplot} 
\end{minipage}
\hfill
\begin{minipage}[b]{0.46\textwidth}
    \centering
    \includegraphics[origin=c, scale=1, width=1.1\textwidth]{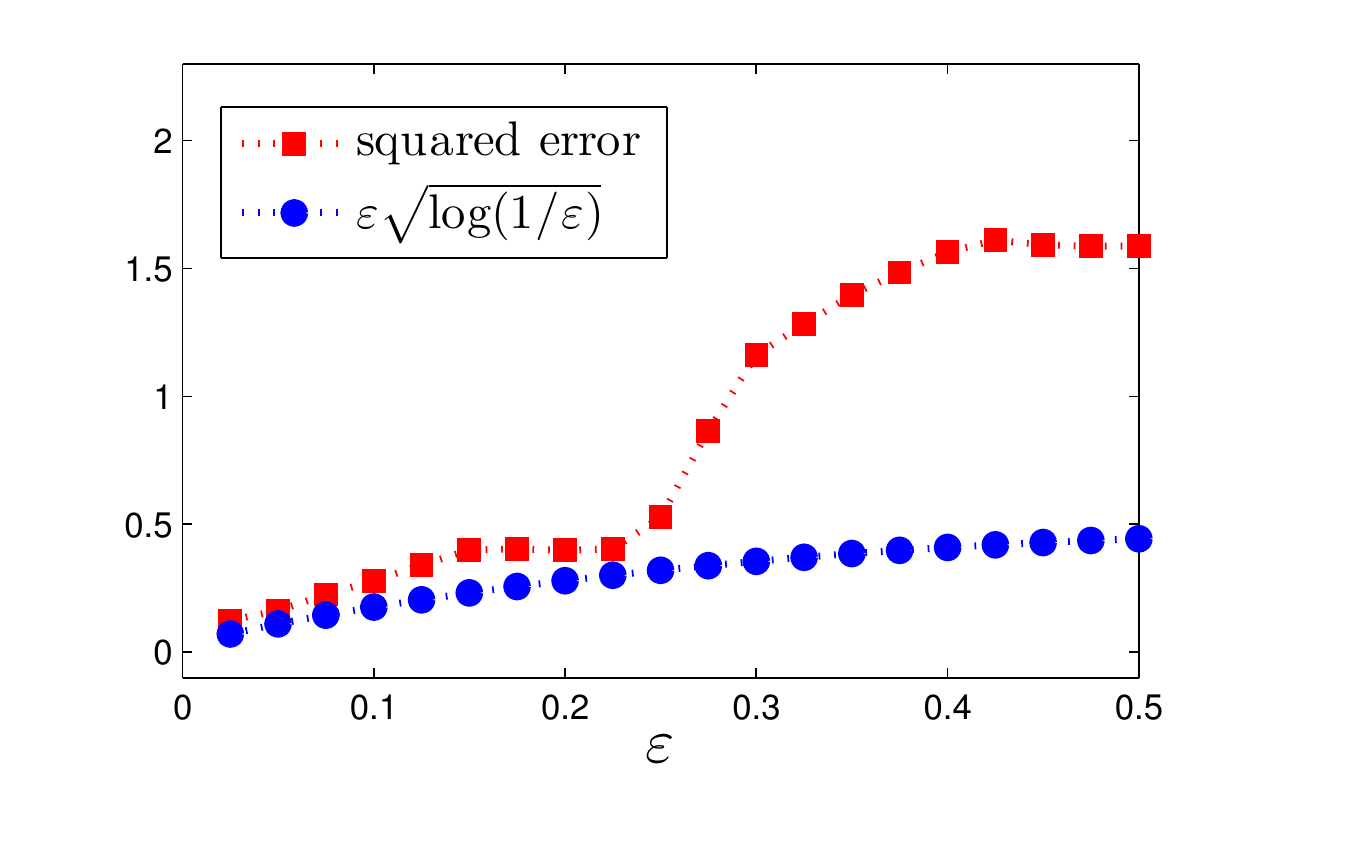}
    \vspace{-1cm}
    \caption{The error as a function of the contamination rate $\varepsilon$ for $n=500$ and $p=5$. The contamination scheme is ``smallest" eigenvector. The number of iterations for $\varepsilon > \varepsilon^*$ was set to $30$.
    }
    \label{fig:breakdown}
\end{minipage}
\end{figure}

\subsection{Breakdown point}
The goal of this experiment is to check empirically the validity of the breakdown point  $\varepsilon^* \stackrel{\text{def}}{=} (5 - \sqrt{5}) / 10\approx 0.28$. To this end, we chose to contaminate the standard normal vectors by {\it{``smallest'' eigenvector}} scheme. Note that if the outliers are well separated from the inliers, like in the uniform outliers scheme, then Algorithm~\ref{algo:a1} detects well these outliers by pushing their weights to $0$ even when $\varepsilon$ is large (larger than $0.28$).   

For $n=500$ and $p=5$, we conducted 50 independent repetitions of the experiment and plotted the error averaged over these 50 repetitions in \Cref{fig:breakdown}. 
For $\varepsilon > 0.28$, we manually set the number of iterations to\footnote{Since the number of iteration $K$ is well-defined for $\varepsilon \le \varepsilon^*_r \approx 0.28$.} 30. It is interesting to observe that there is a clear change in the mean error occurring at a value close to 
$0.28$. This empirical result allows us to conjecture that the breakdown point of the presented estimator is indeed close to $0.28$.

\subsection{Comparison with other estimators}
In this last experiment we wanted to provide a visual illustration of the performance of the proposed estimator  $\hat{\bmu}^{\text{IR}}_n$ as compared to some simple competitors: the sample mean, the coordinatewise median, the geometric median and the oracle obtained by averaging all the inliers. The obtained errors, averaged over 50 independent repetitions, are depicted in \Cref{fig:compare1} and \Cref{fig:compare2}.  The former corresponds to $n=500$, $p=20$ and varying $\varepsilon$, while the parameters of the latter are $p=10$, $\varepsilon = 0.1$ and $n \in [10, 1000]$. In the legend of these figures, \texttt{Estimated Mu} refers to $\hat{\bmu}^{\text{IR}}_n$, the output of Algorithm \ref{algo:a1}, \texttt{Sample Mean} and \texttt{Sample Median} correspond respectively to the sample mean and sample coordinatewise median,  \texttt{Geometric Median} refers to the estimator defined in \eqref{GM:def} and \texttt{Oracle} refers to the sample mean of inliers only. 

The plots show that the iteratively reweighted mean estimators has an error which is nearly as small as the error of the oracle. These errors are way smaller than the errors of the other estimators included in this experiment, which is in line with all the existing theoretical results.  

\begin{figure}
\begin{minipage}[b]{0.46\textwidth}
    \centering
    \includegraphics[scale=0.3, angle=270,width=\textwidth] {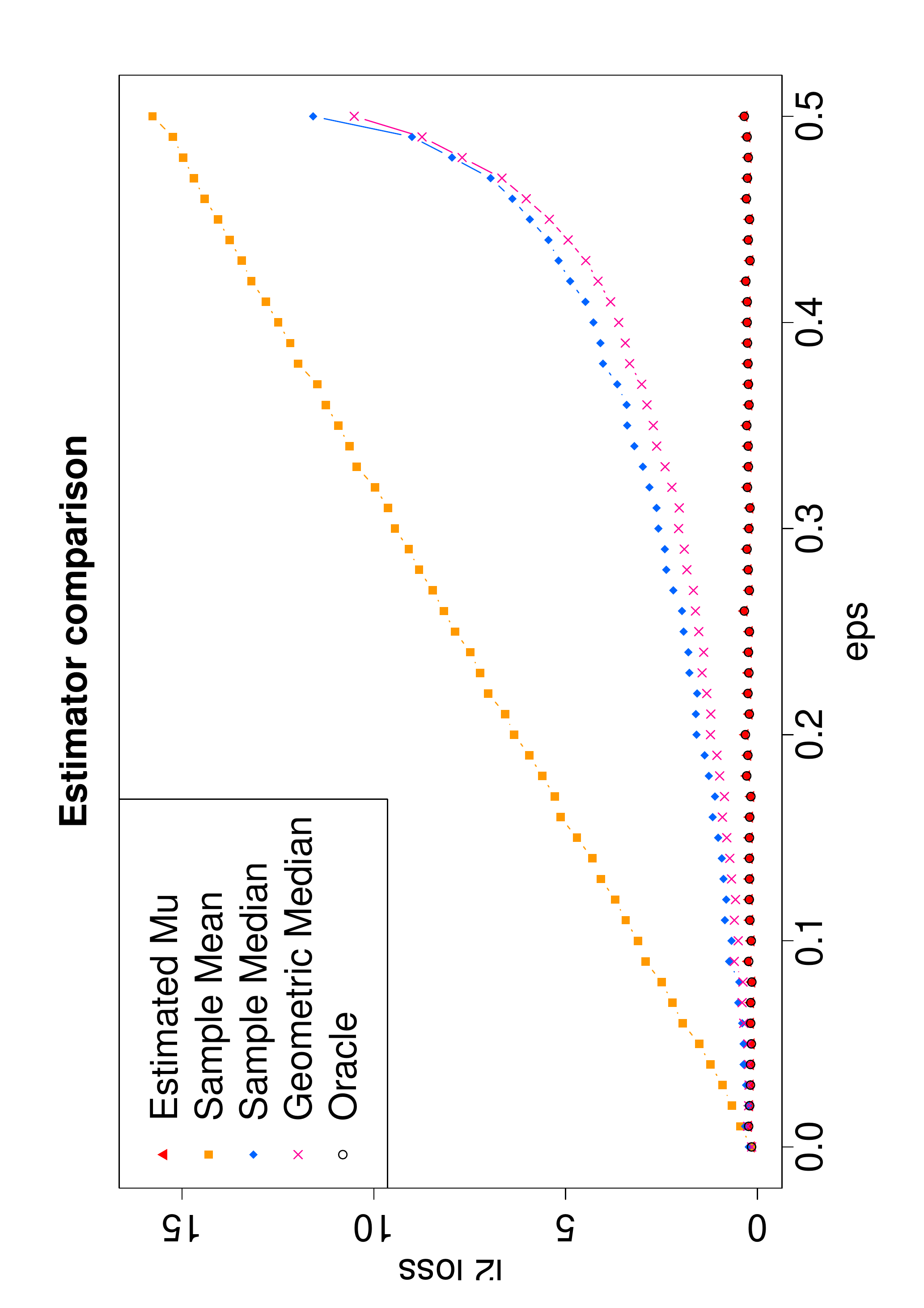}
    \caption{$\ell_2$-loss for different estimators when $n=500, p=20$ and the \textbf{uniform outliers} scheme with  $a=4$ and $b= 10$.
    \label{fig:compare1}} 
\end{minipage}
\hfill
\begin{minipage}[b]{0.46\textwidth}
    \centering
    \includegraphics[scale=0.3, angle=270, width=\textwidth]{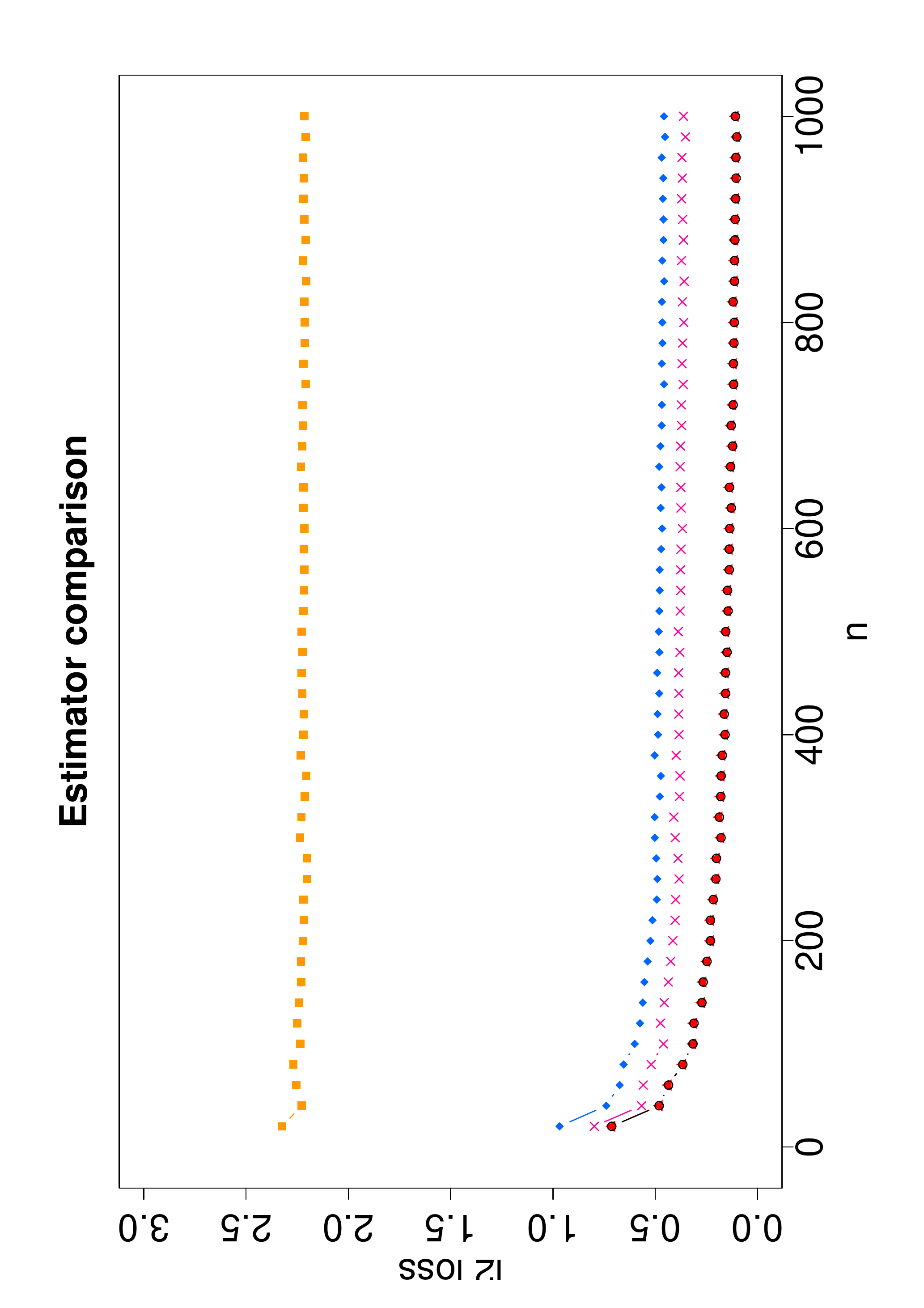}
    \caption{$\ell_2$-loss for different estimators when $p=10$, $\varepsilon=0.1$ and  the \textbf{uniform outliers} scheme with  $a=4$ and $b= 10$.
    \label{fig:compare2}}
\end{minipage}
\end{figure}



	\section{Postponed proofs}\label{sec:proofs}

	We collected in this section all the technical proofs postponed from 
	the \Cref{sec:method} and \Cref{sec:formal}. Throughout this section, we will always assume
	that $\lambda_{\max}(\bSigma) = \|\bSigma\|_{\textup{op}} = 1$. As we
	already mentioned above, the general case can be reduced to this one
	by dividing all the data vectors by $\|\bSigma\|_{\textup{op}}^{1/2}$. 
	The proofs in this section are presented according to the order of the 
	appearance of the corresponding claims in the previous sections. 
	Since the proof of \Cref{th:1} relies on several lemmas and
	propositions, we provide in \Cref{fig:diagram} a diagram
	showing the relations between these results. 
	
	\subsection{Additional details on \eqref{eq:3}}\label{ssec:6.1}
    
	One can check that
	\begin{align}
		\sum_{i=1}^n  w_i^* (\bs X_i-\hat{\bmu}^{k-1})^{\otimes 2} &= 
		\sum_{i=1}^n w_i^*(\bs X_i-\bar{\bs X}_{\bw^*})^{\otimes 2} 
		+ (\bar{\bs X}_{\bw^*}-\hat{\bmu}^{k-1})^{\otimes 2}\\
		&\preceq \sum_{i=1}^n w_i^*(\bs X_i-\bar{\bs X}_{\bw^*})^{\otimes 2}
		+ \big\|\bar{\bs X}_{\bw^*}-\hat{\bmu}^{k-1}\big\|_2^2\,\bfI_p\\
		&\preceq \sum_{i=1}^n w_i^*(\bs X_i-\bmu^*)^{\otimes 2}
		 + \big\|\bar{\bs X}_{\bw^*}-\hat{\bmu}^{k-1}\big\|_2^2\,\bfI_p.
	\end{align}	
	This readily yields
	\begin{align}
			G(\bw^*,\hat{\bmu}^{k-1}) &\le \lambda_{\max,+}\bigg(\sum_{i=1}^n 
			w_i^*(\bs X_i-\bmu^*)^{\otimes 2} - \bSigma\bigg) + 
			\big\|\bar{\bs X}_{\bw^*}-\hat{\bmu}^{k-1}\big\|_2^2\\
			&\le G(\bw^*, \bmu^*)  + 
			\big(\big\|\bar{\bzeta}_{\bw^*}\big\|_2 + 
			\big\|\bmu^*-\hat{\bmu}^{k-1}\big\|_2\big)^2.
	\end{align}
	To get the last line of \eqref{eq:3}, it suffices to apply the
	elementary consequence of the Minkowskii inequality $\sqrt{a+ (b+c)^2}
	\le \sqrt{a+b^2} + c$, for every $a,b,c>0$.


	\subsection{Rough bound on the error of the geometric median}

	This subsection is devoted to the proof of an error estimate of the
	geometric median. This estimate is rather crude, in terms of its dependence
	on the sample size, but it is sufficient for our purposes. As a matter of fact,
	it also shows that the breakdown point of the geometric median is equal to 
	$1/2$. 
	
	\begin{lemma}\label{lem:8}
	For every $\varepsilon\le 1/2$, the geometric median satisfies the inequality
	\begin{align}
		\|\GMmu - \bmu^*\|_2 &\le 	
		\frac{2}{n(1-2\varepsilon)}\sum_{i=1}^n\|\bzeta_i\|_2.\label{GM:in1}
	\end{align}
	Furthermore, its expected error satisfies
	\begin{align}
		\|\GMmu - \bmu^*\|_{\mathbb L_2} &\le 	
		 \frac{2\rSigma^{1/2}}{1-2\varepsilon} .\label{GM:in2}
	\end{align}
	\end{lemma}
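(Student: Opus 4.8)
The plan is to exploit the defining variational property of the geometric median together with a simple counting argument over the inliers. First I would recall that by definition $\GMmu$ minimizes $\bmu\mapsto\sum_{i=1}^n\|\bX_i-\bmu\|_2$, so in particular $\sum_{i=1}^n\|\bX_i-\GMmu\|_2\le\sum_{i=1}^n\|\bX_i-\bmu^*\|_2$. Splitting both sides over the inlier set $\mathcal I$ and the outlier set $\mathcal O$, using $\|\bX_i-\bmu^*\|_2=\|\bzeta_i\|_2$ for $i\in\mathcal I$, and applying the triangle inequality on the outlier contributions in the form $\|\bX_i-\bmu^*\|_2\le\|\bX_i-\GMmu\|_2+\|\GMmu-\bmu^*\|_2$, one gets
\begin{align}
\sum_{i\in\mathcal I}\|\bX_i-\GMmu\|_2
\le \sum_{i\in\mathcal I}\|\bzeta_i\|_2 + |\mathcal O|\,\|\GMmu-\bmu^*\|_2 .
\end{align}
On the other hand, a reverse triangle inequality on the inlier terms gives $\|\bX_i-\GMmu\|_2\ge\|\GMmu-\bmu^*\|_2-\|\bzeta_i\|_2$, hence $\sum_{i\in\mathcal I}\|\bX_i-\GMmu\|_2\ge |\mathcal I|\,\|\GMmu-\bmu^*\|_2-\sum_{i\in\mathcal I}\|\bzeta_i\|_2$. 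Combining the two displays yields $(|\mathcal I|-|\mathcal O|)\,\|\GMmu-\bmu^*\|_2\le 2\sum_{i\in\mathcal I}\|\bzeta_i\|_2$. Since $|\mathcal O|\le\varepsilon n$ and $|\mathcal I|\ge n(1-\varepsilon)$, we have $|\mathcal I|-|\mathcal O|\ge n(1-2\varepsilon)>0$ for $\varepsilon<1/2$, which gives \eqref{GM:in1} after bounding $\sum_{i\in\mathcal I}\|\bzeta_i\|_2\le\sum_{i=1}^n\|\bzeta_i\|_2$ (treating $\varepsilon=1/2$ as the limiting/degenerate case, or noting the bound is vacuous there).

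For the expectation bound \eqref{GM:in2}, I would take the $\mathbb L_2$-norm of both sides of \eqref{GM:in1} and use Minkowski's (triangle) inequality for $\|\cdot\|_{\mathbb L_2}$ to move the norm inside the sum: $\|\GMmu-\bmu^*\|_{\mathbb L_2}\le\frac{2}{n(1-2\varepsilon)}\sum_{i=1}^n\|\,\|\bzeta_i\|_2\,\|_{\mathbb L_2}$. Each $\bzeta_i=\bY_i-\bmu^*$ is $\mathcal N_p(0,\bSigma)$, so $\mathbf E[\|\bzeta_i\|_2^2]=\tr(\bSigma)=\rSigma$ under the normalization $\|\bSigma\|_{\textup{op}}=1$, giving $\|\,\|\bzeta_i\|_2\,\|_{\mathbb L_2}=\sqrt{\rSigma}$. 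Summing over the $n$ terms produces $\frac{2}{n(1-2\varepsilon)}\cdot n\sqrt{\rSigma}=\frac{2\sqrt{\rSigma}}{1-2\varepsilon}$, which is \eqref{GM:in2}.

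The only mildly delicate point is that the geometric median need not be unique and one should check the argument only uses the minimizing inequality, which it does; everything else is the triangle inequality and a count of inliers versus outliers. So I do not anticipate a genuine obstacle — the main thing to be careful about is the direction of the triangle inequalities (one needs the reverse triangle inequality on the inlier terms on the lower-bound side and the ordinary one on the outlier terms on the upper-bound side) and the sign of $|\mathcal I|-|\mathcal O|$, which is where the constraint $\varepsilon<1/2$ enters and also why the breakdown point of the geometric median equals $1/2$.
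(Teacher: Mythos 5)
Your proposal is correct and follows essentially the same route as the paper: the minimizing property of the geometric median, the triangle inequality on the outlier terms, and the reverse triangle inequality on the inlier terms, yielding $(|\mathcal I|-|\mathcal O|)\,\|\GMmu-\bmu^*\|_2\le 2\sum_{i\in\mathcal I}\|\bzeta_i\|_2$ and hence \eqref{GM:in1}, followed by Minkowski's inequality and $\mathbf E[\|\bzeta_1\|_2^2]=\tr(\bSigma)=\rSigma$ under the normalization $\|\bSigma\|_{\textup{op}}=1$ for \eqref{GM:in2}. The paper merely arranges the same three inequalities in a single chain (and works with $\bmu^*=0$), so there is no substantive difference.
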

	\begin{proof}
	Recall that the geometric median of $\bX_1,\ldots,\bX_n$ is defined by 
	\begin{align}
	\GMmu \in\textup{arg}\min_{\bmu\in\RR^p} \sum_{i=1}^n\|\bX_i-\bmu\|_2.
	\end{align}
	It is clear that
	\begin{align}
		\sum_{i=1}^n\|\bX_i-\GMmu\|_2 &\le \sum_{i=1}^n\|\bX_i-\bmu^*\|_2
	\end{align}
	Without loss of generality, we assume that $\bmu^*= \mathbf 0$. We also assume 
	that $n\varepsilon$ is an integer. On the one hand, 
	we have the simple bound
	\begin{align}
		n(1-\varepsilon)\|\GMmu\|_2 &\le 
		\sum_{i\in\mathcal I}\big(\|\bX_i-\GMmu\|_2 + \|\bzeta_i\|_2\big)\\ 
		&\le 
		\sum_{i=1}^n\|\bX_i-\GMmu\|_2 + \sum_{i\in\mathcal I}\|\bzeta_i\|_2
		-\sum_{i\in\mathcal O}\|\bX_i-\GMmu\|_2\\ 
		&\le 
		\sum_{i=1}^n\|\bX_i-\bmu^*\|_2 + \sum_{i\in\mathcal I}\|\bzeta_i\|_2
		-\sum_{i\in\mathcal O}\|\bX_i-\GMmu\|_2\\ 
		&\le 
		2\sum_{i\in\mathcal I}\|\bzeta_i\|_2
		+\sum_{i\in\mathcal O}\big(\|\bX_i\|_2-\|\bX_i-\GMmu\|_2\big)\\ 
		&\le 
		2\sum_{i\in\mathcal I}\|\bzeta_i\|_2
		+n\varepsilon \|\GMmu\|_2,
	\end{align}
	where the first and the last inequalities follow from triangle inequality.
	From the last display, we infer that
	\begin{align}
		\|\GMmu\|_{\mathbb L_2} &\le \frac{2}{n(1-2\varepsilon)} 
		\sum_{i=1}^n 	\|\bzeta_i\|_{\mathbb L_2}
		\le \frac{2}{1-2\varepsilon}  \|\bzeta_1\|_{\mathbb L_2} 
		\le \frac{2\rSigma^{1/2}}{1-2\varepsilon} 
	\end{align}
	and we get the	claim of the lemma.
	\end{proof}

\begin{figure}    
    \begin{tikzpicture}
  [node distance=.8cm,
  start chain=going above,]

    \node (lem6) [punktchain] 
    {\Cref{lem:smaxbis}:\\ Centered moments and 
    deviations of Gaussian matrices};
    
    \begin{scope}[start branch=hoejre,]
    \node[punktchain,on chain=going left] (lem5) {Lemmas \ref{lem:vershynin},\ref{lem:smax}:\\ 
    Moments and deviations of singular values of
    Gaussian matrices};
    \end{scope}
    
    \begin{scope}[start branch=hoejre,]
    \node[punktchain,on chain=going right] (lem1) {\Cref{lemma0}:\\ Properties of the weights from the feasible set};
    \end{scope}
    
    \node (lem7) [punktchain] 
    {\Cref{lemma3}:\\ Operator-norm error of weighted 
    covariance of Gaussians};
    
    \begin{scope}[start branch=hoejre,]
    \node[punktchain, on chain=going right] (lem3) {\Cref{lemma2}:\\ $\mathbb L_2$-error of weighted averages of Gaussian vectors};
    \end{scope}
    
    \node (prop1) [punktchain] {\Cref{proposition1}:\\ Deterministic bound involving the weighted covariance};
    \begin{scope}[start branch=hoejre,]
      \node[punktchain, on chain=going left] (lem2) {\Cref{lem:8}:\\ Assessing the Error of Initial Estimator\\ 
      (geom.\ median)};
    \end{scope}
    \begin{scope}[start branch=hoejre,]
      \node[punktchain, on chain=going right] (prop2) {\Cref{proposition2}:\\ Uniform bounds on the moments of Stochastic Terms};
    \end{scope}
  \node[punktchain, join] (thm1) {\Cref{th:1}: Bound on Expected Error};
   \draw[blue,|-,|-,->, thick,] (lem2.north) |-+(0,1em)-| (thm1.south);
  \draw[blue,|-,|-,->, thick,] (prop2.north) |-+(0,1.1em)-| (thm1.south);
  \draw[blue,|-,|-, thick,] (lem7.north) |-+(0,1em)-| (lem3.north);
  \draw[blue,-,->, thick,] 
    let \p1=(lem7.north), \p2=(lem3.north), \p3 = (prop1.north), \p4 = (prop2.north) in
    ($(\x1+6.5em,\y1+1em)$) to ($(\x1+6.5em,\y3+1em)$);
  \draw[blue,-,->, thick,] 
    let \p1=(lem7.north), \p2=(prop2.west) in
    ($(\x1+6.5em,\y2)$) to ($(\x2,\y2)$);
  \draw[blue,-,->, thick,] (lem5.east) to (lem6.west);
  \draw[blue,|-,|-, thick,] (lem6.north) |-+(0,1em)-| (lem1.north);
  \draw[blue,-, thick,] 
    let \p1=(lem6.north), \p2=(lem3.west) in
    ($(\x1+6.5em,\y1+1em)$) to ($(\x1+6.5em,\y2)$);
  \draw[blue,-,->, thick,] (lem7.east) to (lem3.west);
  \draw[blue,-,->, thick,] (lem3.west) to (lem7.east);
\end{tikzpicture}
\caption{A diagram showing the relation between different 
lemmas and propositions used in the proof of \Cref{th:1}.}
\label{fig:diagram}
\end{figure}
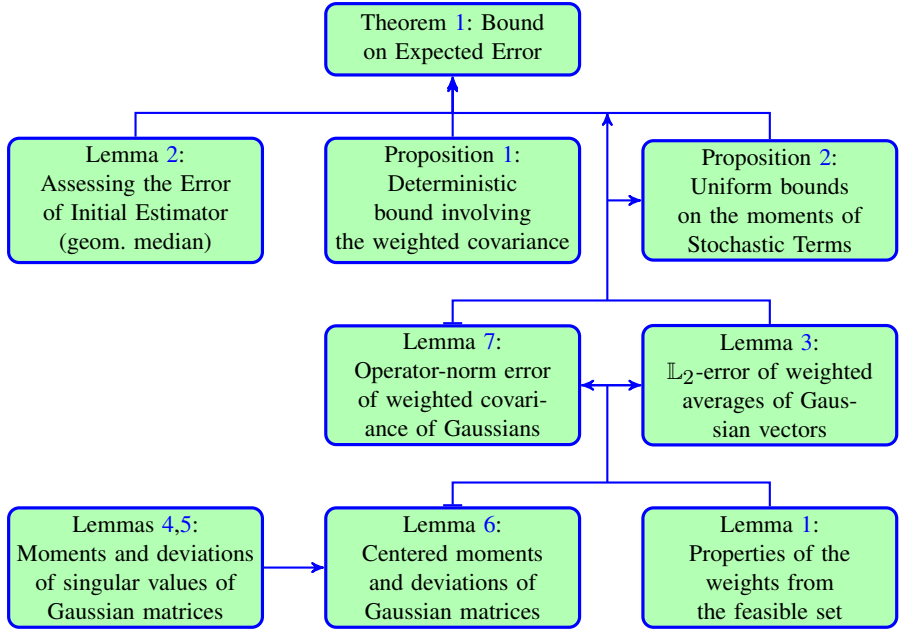    
	\subsection{Proof of \Cref{proposition1}}
	To ease notation throughout this proof, we write $\bar\varepsilon$ instead of 
	$\varepsilon_{w}$. Simple algebra yields
	\begin{align}
		\bar{\bs X}_{\bw} - \bmu^* - \bar{\bzeta}_{\bw_{\mathcal I}}  
		    &= \bar{\bs X}_{\bw_{\mathcal I}} + \bar{\bs X}_{\bw_{\mathcal I^c}} 
		        - \bmu^* - \bar{\bzeta}_{\bw_{\mathcal I}}  \\ 
			& = (1-\bar\varepsilon)\bmu^* + \bar{\bzeta}_{\bw_{\mathcal I}} + 
			    \bar{\bs X}_{\bw_{\mathcal I^c}} - \bmu^* - 
			    \bar{\bzeta}_{\bw_{\mathcal I}}\\
			& = \bar{\bs X}_{\bw_{\mathcal I^c}} - \bar\varepsilon\bmu^*\\
			& = \bar{\bs X}_{\bw_{\mathcal I^c}} - \bar\varepsilon\bar{\bs X}_{\bw} 
			    + \bar\varepsilon(\bar{\bs X}_{\bw} - \bmu^* ).
	\end{align}
	This implies that $(1-\bar\varepsilon)(\bar{\bs X}_{\bw} - \bmu^*) - 
	\bar{\bzeta}_{\bw_{\mathcal I}}  = \bar{\bs X}_{\bw_{\mathcal I^c}} 
	- \bar\varepsilon\bar{\bs X}_{\bw}$,  which is equivalent to 
	\begin{align}
			\bar{\bs X}_{\bw} - \bmu^* - \bar{\bzeta}_{\bw_{|\mathcal I}}  
			= \frac{\bar{\bs X}_{\bw_{\mathcal I^c}} - \bar\varepsilon\bar{\bs X}_{\bw}}
			{1-\bar\varepsilon}.\label{eq:p11a}
	\end{align}
	Therefore, we have 
	\begin{align}
		\big\|\bar{\bs X}_{\bw} - \bmu^* - 
		\bar{\bzeta}_{\bw_{|\mathcal I}}\big\|_2  
		& = \frac{\big\|\bar{\bs X}_{\bw_{\mathcal I^c}} - 
		\bar\varepsilon\bar{\bs X}_{\bw}\big\|_2}{1-\bar\varepsilon}\\
		& = \frac1{1-\bar\varepsilon} \bigg\|\sum_{i\in\mathcal I^c} 
		w_i(\bs X_i - \bar{\bs X}_{\bw})\bigg\|_2\\
		& = \frac1{1-\bar\varepsilon} \sup_{\bv\in\mathbb S^{p-1}} \sum_{i\in\mathcal I^c}
		w_i\,\bv^\top(\bs X_i - \bar{\bs X}_{\bw}).
			\label{eq:p11b}
	\end{align}
	Using the Cauchy-Schwarz inequality as well as the notations 
	$\mathbf M_i = (\bs X_i-\bar{\bs X}_{\bw})^{\otimes 2}$ and 
	$\mathbf M = \sum_{i=1}^n w_i\mathbf M_i$, we get
	\begin{align}
			\bigg\{\sum_{i\in\mathcal I^c} w_i\,\bv^\top(\bs X_i-\bar{\bs X}_{\bw})\bigg\}^2
			&\le \sum_{i\in\mathcal I^c} w_i\sum_{i\in\mathcal I^c} w_i|
				\bv^\top(\bs X_i-\bar{\bs X}_{\bw})|^2
			=\bar\varepsilon\, \bv^\top\sum_{i\in\mathcal I^c} w_i\mathbf{M}_i\bv\\
			&=\bar\varepsilon\,\bv^\top\sum_{i=1}^n w_i\mathbf{M}_i\bv-
			\bv^\top\sum_{i\in\mathcal I} w_i\mathbf{M}_i\bv\\
			&={\bar\varepsilon}\,\bv^\top\big(\mathbf{M} - \bSigma\big)\bv +
			{\bar\varepsilon}\bv^\top\Big(\bSigma-\sum_{i\in\mathcal I} w_i
			\mathbf{M}_i\Big)\bv\\
			&\le \bar\varepsilon\,\lambda_{\max}\big( \mathbf{M}-\bSigma\big)
			+ \bar\varepsilon\,\Big\{\bv^\top\bSigma\bv - 
			\bv^\top\sum_{i\in\mathcal I} w_i\mathbf{M}_i\bv\Big\}.
			\label{eq:p12}
	\end{align}
	Finally, for any unit vector $\bv$,
	\begin{align}
			\bv^\top\sum_{i\in\mathcal I} w_i\mathbf{M}_i \bv &= (1-\bar\varepsilon)
			\,\bv^\top\sum_{i\in\mathcal I} (\bw_{|\mathcal I})_i(\bs X_i-\bar{\bs X}_{\bw})^{\otimes 2}\bv\\
			&\ge(1-\bar\varepsilon)\,\bv^\top\sum_{i\in\mathcal I} (\bw_{|\mathcal I})_i
			(\bs X_i-\bar{\bs X}_{\bw_{|\mathcal I}})^{\otimes 2}\bv\\
			& = (1-\bar\varepsilon)\,\bv^\top\sum_{i\in\mathcal I} (\bw_{|\mathcal I})_i
			(\bzeta_i-\bar{\bzeta}_{\bw_{|\mathcal I}})^{\otimes 2}\bv\\
			&= (1-\bar\varepsilon)\,\bv^\top\sum_{i\in\mathcal I} (\bw_{|\mathcal I})_i
				\bzeta_i\bzeta_i^\top\bv-(1-\bar\varepsilon)\,
				(\bv^\top\bar{\bzeta}_{\bw_{|\mathcal I}})^2\\
			&\ge (1 -\bar\varepsilon)\bigg(\bv^\top\bSigma\bv - \lambda_{\max}
				\bigg(\sum_{i\in\mathcal I} (\bw_{|\mathcal I})_i (\bSigma-
				\bzeta_i\bzeta_i^\top)\bigg) - \|\bar{\bzeta}_{\bw_{|\mathcal I}}\|_2^2
				\bigg).
			 \label{eq:p13}
	\end{align}
	Combining \eqref{eq:p12} and \eqref{eq:p13}, we get
	\begin{align}
		\bigg\{\sum_{i\in\mathcal I^c} w_i\,\bv^\top(\bs X_i-\bmu)\bigg\}^2
				&\le \bar\varepsilon\,\lambda_{\max}\big( \mathbf{M}-\bSigma\big)
					+ \bar\varepsilon^2\,\lambda_{\max}\big(\bSigma\big) \\
				&\qquad +\bar\varepsilon(1-\bar\varepsilon)\bigg\{  \lambda_{\max}
					\bigg(\sum_{i\in\mathcal I} (\bw_{|\mathcal I})_i(\bSigma - 
					\bzeta_i\bzeta_i^\top)\bigg)  + 
					\|\bar{\bzeta}_{\bw_{|\mathcal I}}\|_2^2
					\bigg\}.\label{eq:p14}
	\end{align}
	In conjunction with \eqref{eq:p11b}, this yields 
	\begin{align}
	    \big\|\bar{\bs X}_{\bw} - \bmu^* - 
			\bar{\bzeta}_{\bw_{|\mathcal I}}\big\|_2
			& \leq \frac{\sqrt{\bar\varepsilon}}{1-\bar\varepsilon} \bigg(
			\lambda_{\max}\big( \mathbf{M}-\bSigma\big)
					+ \bar\varepsilon\,\lambda_{\max}\big(\bSigma\big) \\
			&\qquad +(1-\bar\varepsilon)\bigg\{  \lambda_{\max}
					\bigg(\sum_{i\in\mathcal I} (\bw_{|\mathcal I})_i(\bSigma - 
					\bzeta_i\bzeta_i^\top)\bigg)  + 
					\|\bar{\bzeta}_{\bw_{|\mathcal I}}\|_2^2
					\bigg\}\bigg)^{1/2}.
	\end{align}
	From this relation, using the triangle inequality and the inequality 
	$\sqrt{a_1+\ldots + a_n}\le \sqrt{a_1}+\ldots+\sqrt{a_n}$, we get 
	\begin{align}
	    \big\|\bar{\bs X}_{\bw} - \bmu^*\|_2&\le 
			\|\bar{\bzeta}_{\bw_{|\mathcal I}}\big\|_2
			+ \frac{\sqrt{\bar\varepsilon}}{1-\bar\varepsilon} 
			\lambda_{\max,+}^{1/2}\big( \mathbf{M}-\bSigma\big)
			+ \frac{{\bar\varepsilon}\|\bSigma\|_{\textup{op}}^{1/2}}
			{1-\bar\varepsilon}\\
			&\qquad + \sqrt{\frac{\bar\varepsilon}{1-\bar\varepsilon}}
			\lambda_{\max,+}^{1/2}
			\bigg(\sum_{i\in\mathcal I} (\bw_{|\mathcal I})_i(\bSigma - 
			\bzeta_i\bzeta_i^\top)\bigg)  + \sqrt{\frac{\bar\varepsilon}{1-
			\bar\varepsilon}}\|\bar{\bzeta}_{\bw_{|\mathcal I}}\|_2
	\end{align}
	and, after rearranging the terms, the claim of the proposition follows.

	\subsection{Proof of \Cref{lemma0}} 
	Claim i) of this lemma is straightforward. For ii), we use the fact
	that the compact convex polytope $\mathcal W_{n,\ell}$ is the
	convex hull of its extreme points. The fact that each uniform weight 
	vector $\bu^{J}$ is an extreme point of $\mathcal
	W_{n,\ell}$ is easy to check. Indeed, if for two points 
	$\bw$ and $\bw'$ from $\mathcal W_{n,\ell}$ we have
	$\bu^{J} = 0.5(\bw+\bw')$, then we necessarily have
	$\bw_{J^c} = \bw'_{J^c} = 0$. Therefore, 
	for any $j\in J$,
	\begin{align}
			\frac1{\ell} \ge w_j  = 1 - \sum_{i\in J\setminus\{j\}} w_i \ge 
			1-\big(\ell-1)\times \frac1{\ell}=\frac1{\ell}.
	\end{align}
	This implies that $w_j=\frac1{\ell}\mathds 1(j\in J)$. Hence, 
	$\bw = \bu^{J}$ and the same is true for
	$\bw'$. Hence, $\bu^{J}$ is an extreme 
	point. Let us prove now that all the extreme points of $\mathcal
	W_{n,\ell}$ are of the form $\bu^{J}$ with
	$|J| = \ell$. Let $\bw\in \mathcal W_{n,\ell}$ be such that one 
	of its coordinates is strictly positive and strictly smaller 
	than $1/\ell$. Without loss of generality, we assume that the 
	two smallest nonzero entries of $\bw$ are $w_1$ and $w_2$. 
	We have $0<w_1<1/\ell$ and $0<w_2<1/\ell$. Set
	$\rho = w_1\wedge w_2 \wedge \{1/\ell - w_1\}\wedge 
	\{1/\ell - w_2\}$. For $\bw^+ = \bw + (\rho,-\rho,0,\ldots,0)$ 
	and $\bw^- = \bw - (\rho,-\rho, 0,\ldots,0)$, we have 
	$\bw^+,\bw^-\in\mathcal W_{n,\ell}$
	and $\bw = 0.5(\bw^++\bw^-)$. Therefore, $\bw$ is not 
	an extreme point of $\mathcal W_{n,\ell}$. This completes the
	proof of ii). 

	Claim ii) implies that $\mathcal W_{n,\ell} = \{\sum_{|J|=\ell} \alpha_J \bu^J:
	\bs\alpha \in \bs\Delta^{K-1}\}$ with $K={n\choose \ell}$. Hence, 
	\begin{align}
			\sup_{\bw\in \mathcal W_{n,\ell}} G(\bw) &= 
					\sup_{\bs\alpha\in\bs\Delta^{K-1}} G\bigg(\sum_{|J|=\ell} 
					\alpha_J \bu^J\bigg) \le \sup_{\bs\alpha\in\bs\Delta^{K-1}} 
					\sum_{|J|=\ell} \alpha_J G\big(\bu^J\big)
					\le \max_{|J| =\ell} G(\bu^J)
	\end{align}
	and claim iii) follows. 

	To prove iv), we check that 
	\begin{align}
		\|\bw_{ I}\|_1 = \sum_{i\in  I} w_i  
		    = 1 - \sum_{i\not\in  I} w_i
		    \ge 1-\frac{| I^c|}{\ell} \ge 1- \frac{n-\ell'}{\ell} 
			= \frac{\ell+\ell'-n}{\ell}.
	\end{align}
	This readily yields $(\bw_{|I})_i \le 1/(\ell+\ell'-n)$, which 
	leads to the claim of item iv).

	\subsection{Moments of suprema over $\mathcal W_{n,\ell}$ of 
	weighted averages of Gaussian vectors}

	We recall that $\bxi_i$'s, for
	$i=1,\ldots,n$ are i.i.d. Gaussian vectors with zero mean and identity covariance
	matrix, and $\bzeta_i = \bSigma^{1/2}\bxi_i$. In addition, the covariance 
	matrix $\bSigma$ satisfies $\|\bSigma\|_{\textup{op}}=1$. 

	\begin{lemma}\label{lemma2}
	Let $p\ge 1$, $n\ge 1$, $m\in[2,0.562n]$ and $o\in [0, m]$ be four positive integers. 
	It holds that
	\begin{align}
			\bfE^{1/2}\Bigg[\sup_{\substack{\bw\in \mathcal W_{n,n-m+o} \\ |I| \ge n-o}}
			{\|\bar{\bzeta}_{\bw_{|I}}\|}_2^2\Bigg]  \le {\sqrt{\rSigma/n}}\,
			\bigg(1+7\sqrt{\frac{m}{2n}}\bigg) + \frac{4.6m}{n}\sqrt{\log(\nicefrac{ne}{m})}.
	\end{align}
	\end{lemma}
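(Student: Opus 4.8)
The plan is to collapse the double supremum into a maximum over uniform weight vectors (using the combinatorial facts of \Cref{lemma0}), then to peel off the unweighted sample mean of the $\bzeta_i$'s, and finally to control the residual with a union bound over $m$-subsets combined with Gaussian concentration. First I would apply item iv) of \Cref{lemma0} with $\ell=n-m+o$ and $\ell'=n-o$: since $m<n$ we have $\ell'>n-\ell$ and $\ell+\ell'-n=n-m$, so every admissible pair $(\bw,I)$ in the supremum satisfies $\bw_{|I}\in\mathcal W_{n,n-m}$; in particular the left-hand side of the claimed inequality is at most $\bfE^{1/2}\big[\sup_{\bv\in\mathcal W_{n,n-m}}\|\bar{\bzeta}_{\bv}\|_2^2\big]$, where I write $\bar{\bzeta}_{\bv}=\sum_i v_i\bzeta_i$. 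Since $\bv\mapsto\|\bar{\bzeta}_{\bv}\|_2^2$ is a convex (quadratic) function on $\bDelta^{n-1}$, item iii) of \Cref{lemma0} identifies this supremum with $\max_{|J|=n-m}\|\bar{\bzeta}_{\bu^J}\|_2^2$; thus I am reduced to bounding $\bfE^{1/2}\big[\max_{|J|=n-m}(n-m)^{-2}\,\|\sum_{i\in J}\bzeta_i\|_2^2\big]$.

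Next I would use a centered decomposition of the trimmed average. Writing $\mathbf X=[\bxi_1|\cdots|\bxi_n]$ so that $\sum_i v_i\bzeta_i=\bSigma^{1/2}\mathbf X\bv$, and $\bar{\bzeta}=\tfrac1n\sum_i\bzeta_i$, a short computation shows that for $|J|=n-m$ and $O=J^c$,
\begin{align}
\frac1{n-m}\sum_{i\in J}\bzeta_i=\bar{\bzeta}-\frac1{n-m}\,\bSigma^{1/2}\mathbf X\bv_O,\qquad\bv_O:=\mathds 1_O-\frac mn\mathds 1,
\end{align}
where $\bv_O\perp\mathds 1$ and $\|\bv_O\|_2^2=m(1-m/n)$. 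By the triangle inequality and Minkowski's inequality in $\mathbb L_2$,
\begin{align}
\bfE^{1/2}\Big[\max_{|J|=n-m}\big\|\tfrac1{n-m}\textstyle\sum_{i\in J}\bzeta_i\big\|_2^2\Big]\le\|\bar{\bzeta}\|_{\mathbb L_2}+\frac1{n-m}\,\bfE^{1/2}\Big[\max_{|O|=m}\big\|\bSigma^{1/2}\mathbf X\bv_O\big\|_2^2\Big],
\end{align}
and $\|\bar{\bzeta}\|_{\mathbb L_2}^2=n^{-2}\sum_i\bfE\|\bzeta_i\|_2^2=\tr(\bSigma)/n=\rSigma/n$ (using $\|\bSigma\|_{\textup{op}}=1$), which already supplies the leading term $\sqrt{\rSigma/n}$ with the sharp constant $1$. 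It is important to keep $\mathbf X$ multiplied by the \emph{fixed} vector $\bv_O$ rather than to bound $\|\sum_{i\in O}\bzeta_i\|_2$ by $s_{\max}$ of the corresponding column submatrix of $\bSigma^{1/2}\mathbf X$: the latter would bring in the ambient dimension $p$, whereas $\|\bSigma^{1/2}\mathbf X\bv_O\|_2$ concentrates around $\|\bv_O\|_2\sqrt{\tr(\bSigma)}$, which is what yields the effective rank $\rSigma$ in place of $p$.

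For the remaining maximum I would combine Gaussian concentration with a union bound. For each fixed $O$, the map $\mathbf X\mapsto\|\bSigma^{1/2}\mathbf X\bv_O\|_2$ is $\|\bv_O\|_2$-Lipschitz with respect to the Frobenius norm (because $\|\bSigma^{1/2}\|_{\textup{op}}=\|\bSigma\|_{\textup{op}}^{1/2}=1$), it has second moment $\|\bv_O\|_2^2\,\tr(\bSigma)=\|\bv_O\|_2^2\,\rSigma$, hence mean at most $\|\bv_O\|_2\sqrt{\rSigma}$ by Jensen, and, $\mathbf X$ being a standard Gaussian matrix, the Gaussian concentration inequality makes it sub-Gaussian about its mean with variance proxy $\|\bv_O\|_2^2=m(1-m/n)$. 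Taking a union bound over the $\binom nm\le(ne/m)^m$ subsets $O$ and integrating the resulting tail, I get
\begin{align}
\bfE^{1/2}\Big[\max_{|O|=m}\big\|\bSigma^{1/2}\mathbf X\bv_O\big\|_2^2\Big]\le\sqrt{m\big(1-\tfrac mn\big)}\,\Big(\sqrt{\rSigma}+\sqrt{2m\log(ne/m)}+\sqrt2\,\Big).
\end{align}
Dividing by $n-m=n(1-m/n)$ and collecting the three contributions, the overall bound becomes $\sqrt{\rSigma/n}\,(1+\sqrt{(m/n)/(1-m/n)})+(1-m/n)^{-1/2}\,\tfrac mn\sqrt{2\log(ne/m)}+(1-m/n)^{-1/2}\,\tfrac{\sqrt{2m}}{n}$.

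I expect the only genuinely delicate step to be the final elementary-inequality bookkeeping that matches this with the stated right-hand side: one must invoke the hypotheses $m\ge2$ and $m\le0.562\,n$ — which force $1-m/n\ge0.438$ and $\log(ne/m)\ge\log(e/0.562)>\tfrac32$ — to verify simultaneously that $\sqrt{(m/n)/(1-m/n)}\le7\sqrt{m/(2n)}$, that the $O(\sqrt m/n)$ remainder is absorbed by $\tfrac{4.6m}{n}\sqrt{\log(ne/m)}$, and that the coefficient in front of $\tfrac mn\sqrt{\log(ne/m)}$ stays below $4.6$; with the crude estimates above these coefficients come out near $2.1$ and $3.3$ respectively, so there is ample slack. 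One can also run the argument with the plain decomposition $\tfrac1{n-m}\sum_{i\in J}\bzeta_i=\tfrac n{n-m}\bar{\bzeta}-\tfrac1{n-m}\sum_{i\in O}\bzeta_i$; it still works, but the leading constant degrades from $1$ to $\tfrac n{n-m}$ and the margin in the constants is tighter, so the centered form is preferable.
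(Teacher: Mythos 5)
Your proof is correct and follows the same skeleton as the paper's: reduce via claims iv) and iii) of \Cref{lemma0} to a maximum over uniform weights $\bu^J$ with $|J|=n-m$, peel off the full-sample average, control the maximum over $m$-subsets by a union bound with a Gaussian-type tail, and finish with the numerical bookkeeping permitted by $m\le 0.562\,n$. You deviate in two local choices, both sound. First, you use the centered split $\frac1{n-m}\sum_{i\in J}\bzeta_i=\bar\bzeta-\frac1{n-m}\bSigma^{1/2}\mathbf X\bv_O$ with $\|\bv_O\|_2^2=m(1-m/n)$, which makes the leading term exactly $\sqrt{\rSigma/n}$; the paper uses precisely the ``plain decomposition'' you mention at the end, $\frac1{n-m}\sum_{i\in J}\bzeta_i=\frac1{n-m}\sum_{i=1}^n\bzeta_i-\frac1{n-m}\sum_{i\in\bar J}\bzeta_i$, and then absorbs the resulting factor $n/(n-m)$ into the $7\sqrt{m/(2n)}$ correction. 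Second, for the subset maximum the paper observes that $\|\sum_{i\in\bar J}\bzeta_i\|_2^2$ is distributed as $m\sum_j\lambda_j(\bSigma)\xi_j^2$ and invokes a generalized $\chi^2$ tail bound (of the form $e^{-t/3}$), whereas you use the $\|\bv_O\|_2$-Lipschitz property of $\mathbf X\mapsto\|\bSigma^{1/2}\mathbf X\bv_O\|_2$ together with Jensen and Gaussian concentration; both devices keep the effective rank $\rSigma$ (rather than $p$) in the bound, and your route yields marginally smaller constants (roughly $2.1$ and $3.3$ in place of $7$ and $4.6$), comfortably within the stated inequality, so the final elementary verification goes through as you describe.
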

	\begin{proof}[Proof of \Cref{lemma2}] 
	We have
	\begin{align}
			\sup_{\bw\in \mathcal W_{n,n-m+o}}
			\|\bar{\bzeta}_{\bw_{|I}}\|_2 
					& \stackrel{(a)}{\le} 
							\sup_{\bw\in \mathcal W_{n,n-m}}\|\bar{\bzeta}_{\bw}\|_2
					\stackrel{(b)}{\le}
							\max_{|J| = n-m}
							\big\|\bar{\bzeta}_{\bu^{J}}\big\|_2,
			\label{eq:L21}
	\end{align}
	where $(a)$ follows from claim iv)  of \Cref{lemma0} and $(b)$ is a direct 
	consequence of claim iii) of \Cref{lemma0}. Thus, we get
	\begin{align}
			\sup_{\bw\in \mathcal W_{n,n-m+o}}
			\|\bar{\bzeta}_{\bw_{|I}}\|_2
			&\le  \max_{|J| = n-m} 
							{\big\|\bar{\bzeta}_{\bu^{J}}\big\|}_2
							= \frac1{n-m}\max_{|J| = n-m}  
							\bigg\|\sum_{i\in J} \bzeta_i\bigg\|_2\\
			&\le \frac1{n-m}
							\bigg\|\sum_{i=1}^n \bzeta_i\bigg\|_2 + 
							\frac1{n-m}\max_{|\bar J| = m}  
							\bigg\|\sum_{i\in \bar J} \bzeta_i\bigg\|_2.\label{eq:L22}
	\end{align}
	On the one hand, one readily checks that
	\begin{align}\label{eq:L22a}
			\bfE\bigg[\bigg\|\sum_{i=1}^n \bzeta_i\bigg\|_2^2\bigg] = n\rSigma.
	\end{align}
	On the other hand, it is clear that for every $\bar J$ of cardinality $m$, 
	the random variable $\big\|\sum_{i\in \bar J} \bzeta_i\big\|^2_2$ has the 
	same distribution as $m\sum_{j=1}^p \lambda_j(\bSigma)\xi_j^2$, where
	$\xi_1,\ldots,\xi_p$ are i.i.d. standard Gaussian. Therefore, by the union bound, 
	for every $t\ge 0$, we have
	\begin{align}
			\bfP\bigg(\max_{|\bar J| = m}  \bigg\|\sum_{i\in \bar  J} 
			\bzeta_i\bigg\|^2_2>m \big(2\rSigma + t\big)\bigg) 
			&\le {{n}\choose{m}} \bfP\bigg(\bigg\|\sum_{i=1}^{m} 
			\bzeta_i\bigg\|_2^2 > m \big(2\rSigma + t\big)\bigg)\\
			&= {{n}\choose{m}} \bfP\bigg( \sum_{j=1}^p \lambda_j(\bSigma)\xi_j^2
			> 2\rSigma + t\bigg) 
			\le {{n}\choose{m}} e^{-t/3},
	\end{align}
	where the last line follows from a well-known bound on the tails of the 
	generalized $\chi^2$-distribution, see for instance \citep[Lemma 8]{comminges2012}.

	Therefore, setting $Z =\frac1m\max_{|J| = m}  \big\|\sum_{i\in J} 
	\bzeta_i\big\|^2_2 - 2\rSigma$ and using the well-known 
	identity $\bfE[Z] \le \bfE[Z_+] = \int_0^\infty
	\bfP(Z\ge t)\,dt$, we get 
	\begin{align}
			\bfE\bigg[\max_{|\bar J| = m} \frac1{m} \bigg\|
			\sum_{i\in\bar J} \bzeta_i\bigg\|^2_2\bigg] 
			&\le 2\rSigma + \int_0^\infty 
					1\wedge {{n}\choose{m}} e^{-t/3}\,dt\\
			&= 2\rSigma + 3 \log {{n}\choose{m}}
					+ {{n}\choose{m}}\int_{3\log {{n}\choose{m}}}^\infty 
					e^{-t/3}\,dt\\
			& \le 2\rSigma + 3m\log(\nicefrac{ne}{m}) + 3\\
			& \le 2\rSigma + 3.9m\log(\nicefrac{ne}{m}), \label{eq:L23}
	\end{align}
	where the last two steps follow from the inequality $\log {n\choose{m}}
	\le m\log (ne/m)$ and the fact that $m\ge 2$, $m\log(\nicefrac{ne}{m}) \ge 
	n \inf_{x\in [2/n,1]} x(1-\log x) \ge 2(1-\log (2/n))\ge 10/3$. Combining 
	\eqref{eq:L22}, \eqref{eq:L22a} and \eqref{eq:L23}, we arrive at 
	\begin{align}
			\bigg(\bfE\bigg[\sup_{\substack{\bw\in \mathcal W_{n,n-m+o} 
					\\ |I| \ge n-o}}
			{\|\bar{\bzeta}_{\bw_{|I}}\|}_2^2\bigg]\bigg)^{1/2} 
			&\le \frac{\sqrt{n\rSigma}}{n-m} + \frac{\sqrt{m}}{
			n-m}\times\big(2\rSigma + 3.9m 
			\log(\nicefrac{ne}{m})\big)^{1/2}\\
			&\le \sqrt{\rSigma/n}\Big(1+ \frac{m}{n-m} + \frac{\sqrt{2mn}}{n-m}\Big) 
			+ \frac{4.6m}{n}\sqrt{\log(\nicefrac{ne}{m})}.
	\end{align}
	Finally, note that for $\alpha = m/n\le 0.562$, we have
	\begin{align}
	\frac{m}{n-m} + \frac{\sqrt{2mn}}{n-m} &= \sqrt{\frac{m}{2n}}\bigg(
			\frac{\sqrt{2mn}}{n-m} + \frac{2n}{n-m}\bigg)\\
			& = \sqrt{\frac{m}{2n}}\bigg(
			\frac{\sqrt{2\alpha}}{1-\alpha} + \frac{2}{1-\alpha}\bigg)
			\le 7\sqrt{\frac{m}{2n}}.
	\end{align}
	This completes the proof of the lemma.
	\end{proof}


	\subsection{Moments and deviations of singular values of Gaussian matrices}

	Let $\bzeta_1,\ldots,\bzeta_n$ be i.i.d. random vectors drawn from 
	$\mathcal N_p(0,\bSigma)$ distribution, where $\bSigma$ is a 
	$p\times p$ covariance matrix. We denote by $\bzeta_{1:n}$ the 
	$p\times n$ random matrix obtained by concatenating the vectors 
	$\bzeta_i$. Recall that $s_{\min}(\bzeta_{1:n}) = 
	\lambda_{\min}^{1/2}(\bzeta_{1:n}\bzeta_{1:n}^\top)$ 
	and $s_{\max}(\bzeta_{1:n}) = \lambda_{\max}^{1/2}
	(\bzeta_{1:n}\bzeta_{1:n}^\top)$ are the smallest and the 
	largest singular values of the matrix $\bzeta_{1:n}$. 

	\begin{lemma}[\cite{vershynin_2012}, Theorem 5.32 and 
	Corollary 5.35]\label{lem:vershynin}
	Let $\lambda_{\max}(\bSigma)=1$. For every $t>0$ and for every 
	pair of positive integers $n$ and $p$, 
	we have
	\begin{align}
		\bfE\big[s_{\max}(\bzeta_{1:n})\big] & \le \sqrt{n} + \sqrt{\rSigma},\quad
		&\bfP\big(s_{\max}(\bzeta_{1:n})  \ge \sqrt{n} + \sqrt{\rSigma} + 
		t\big)  \le e^{-t^2/2}.
	\end{align}
	If, in addition, $\bSigma$ is the identity matrix, then
	\begin{align}
		\bfE\big[s_{\min}(\bzeta_{1:n})\big] & \ge {\big(\sqrt{n}
		-\sqrt{p}\big)}_+,\quad
		&\bfP\big(s_{\min}(\bzeta_{1:n})  \le \sqrt{n} - \sqrt{p} 
		- t\big)  \le e^{-t^2/2}.
	\end{align}
	\end{lemma}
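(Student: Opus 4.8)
The statements for $\bSigma=\bfI_p$ are exactly \citep[Theorem~5.32 and Corollary~5.35]{vershynin_2012}, applied to the $n\times p$ matrix $\bzeta_{1:n}^\top$ whose rows are i.i.d.\ $\mathcal N_p(0,\bfI_p)$: when $n\ge p$, Theorem~5.32 gives $\sqrt n-\sqrt p\le\bfE[s_{\min}(\bzeta_{1:n})]\le\bfE[s_{\max}(\bzeta_{1:n})]\le\sqrt n+\sqrt p$, while for $n\le p$ the $s_{\min}$ claims are vacuous since $(\sqrt n-\sqrt p)_+=0\le s_{\min}$; Corollary~5.35 — itself an instance of Gaussian concentration for the $1$-Lipschitz maps $A\mapsto s_{\min}(A)$ and $A\mapsto s_{\max}(A)$ — supplies the two tail bounds. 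So the only assertion not already contained in those references, and the one I would actually write out, is the $s_{\max}$ bound for an arbitrary $\bSigma$ with $\lambda_{\max}(\bSigma)=1$.

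For this, write $\bzeta_{1:n}=\bSigma^{1/2}\bxi_{1:n}$ with $\bxi_{1:n}$ a $p\times n$ matrix of i.i.d.\ $\mathcal N(0,1)$ entries, and split the claim into an expectation bound and a deviation bound. For the expectation, observe that
\[
s_{\max}(\bzeta_{1:n})=\sup_{\bv\in\mathbb S^{p-1}}\ \sup_{\bu\in\mathbb S^{n-1}}\ \big\langle\bSigma^{1/2}\bv,\ \bxi_{1:n}\bu\big\rangle=\sup_{\bx\in S}\ \sup_{\bu\in T}\ \langle\bx,\bxi_{1:n}\bu\rangle,
\]
a supremum of a canonical Gaussian process over the ellipsoid $S=\bSigma^{1/2}\mathbb S^{p-1}\subset\RR^p$ and the sphere $T=\mathbb S^{n-1}\subset\RR^n$. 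Chevet's inequality (a special case of Gordon's Gaussian comparison inequality) bounds this by $\big(\sup_{\bx\in S}\|\bx\|_2\big)\,\bfE\|\bs h\|_2+\big(\sup_{\bu\in T}\|\bu\|_2\big)\,\bfE\|\bSigma^{1/2}\bs g\|_2$, where $\bs g\sim\mathcal N(0,\bfI_p)$ and $\bs h\sim\mathcal N(0,\bfI_n)$. Since $\sup_{\bx\in S}\|\bx\|_2=\|\bSigma^{1/2}\|_{\textup{op}}=1$, $\sup_{\bu\in T}\|\bu\|_2=1$, $\bfE\|\bs h\|_2\le\sqrt n$, and $\bfE\|\bSigma^{1/2}\bs g\|_2\le\big(\bfE\|\bSigma^{1/2}\bs g\|_2^2\big)^{1/2}=\sqrt{\tr\bSigma}=\sqrt{\rSigma}$, this yields $\bfE[s_{\max}(\bzeta_{1:n})]\le\sqrt n+\sqrt{\rSigma}$ (which, when $\bSigma=\bfI_p$, recovers $\sqrt n+\sqrt p$).

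For the deviation bound I would view $s_{\max}(\bzeta_{1:n})=\|\bSigma^{1/2}\bxi_{1:n}\|_{\textup{op}}$ as a function of the $pn$ i.i.d.\ standard Gaussian entries of $\bxi_{1:n}$; it is $1$-Lipschitz for the Frobenius norm because $\big|\|\bSigma^{1/2}A\|_{\textup{op}}-\|\bSigma^{1/2}B\|_{\textup{op}}\big|\le\|\bSigma^{1/2}\|_{\textup{op}}\|A-B\|_F=\|A-B\|_F$. The Gaussian concentration inequality then gives $\bfP\big(s_{\max}(\bzeta_{1:n})\ge\bfE[s_{\max}(\bzeta_{1:n})]+t\big)\le e^{-t^2/2}$, and combining this with the expectation bound above produces the claimed inequality. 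The one genuinely non-routine point is the expectation bound for non-isotropic $\bSigma$: the naive submultiplicativity $s_{\max}(\bSigma^{1/2}\bxi_{1:n})\le\|\bSigma^{1/2}\|_{\textup{op}}\,s_{\max}(\bxi_{1:n})$ would only produce $\sqrt n+\sqrt p$, and it is precisely the factorized form of Chevet's comparison that replaces $\sqrt p$ by the (generally smaller) effective-rank term $\sqrt{\rSigma}$; everything else is a direct appeal to \citep{vershynin_2012} together with standard Gaussian concentration.
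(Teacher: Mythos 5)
Your proposal is correct and follows essentially the route the paper intends: the paper offers no separate proof of this lemma, citing \citep[Theorem 5.32, Corollary 5.35]{vershynin_2012} and remarking that the argument there ``carries with almost no change'' to a general $\bSigma$ with $\lambda_{\max}(\bSigma)=1$. Your Chevet/Gordon comparison bound $\bfE[s_{\max}(\bzeta_{1:n})]\le \sqrt{n}+\bfE\|\bSigma^{1/2}\bs g\|_2\le \sqrt{n}+\sqrt{\tr\bSigma}=\sqrt{n}+\sqrt{\rSigma}$, combined with Gaussian concentration for the $1$-Lipschitz map $A\mapsto\|\bSigma^{1/2}A\|_{\textup{op}}$, is precisely that extension (it is the same Gaussian comparison plus concentration mechanism underlying Vershynin's proof), so you have simply written out the details the paper leaves implicit.
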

	The corresponding results in \citep{vershynin_2012} treat only 
	the case of identity covariance matrix $\bSigma=\mathbf I_p$, 
	however the proof presented therein carries with almost no 
	change over the case of arbitrary covariance matrix. These 
	bounds allow us to establish the following inequalities. 

	\begin{lemma} \label{lem:smax}
	For a subset $\bar J$ of $\{1,\ldots,n\}$, we denote by $\bzeta_{\bar J}$ the 
	$p\times |\bar J|$ matrix obtained by concatenating the vectors 
	$\{\bzeta_i: i\in\bar J\}$. Let the covariance matrix $\bSigma$ be such that 
	$\lambda_{\max}(\bSigma) =1$. For every pair of integers  $n,p\ge 1$ and for 
	every integer $m\in[1,n]$, we have
	\begin{align}
		\bfE\big[s_{\max}^2(\bzeta_{1:n})\big] &\le  \big(\sqrt{\rSigma} + 
		\sqrt{n}\big)^2 +4,\label{esp:1}\\
		\bfE\big[(s_{\max}(\bzeta_{1:n})^2-n)_+\big]			
			&\le 6\sqrt{n\rSigma} + 4\rSigma,\qquad \forall n\ge 8,\label{esp:1b}\\
		\bfE\big[(n-s_{\min}(\bxi_{1:n})^2)_+\big]			
			&\le 6\sqrt{np},\qquad \forall n\ge 8,\label{esp:1c}\\
		\bfE\Big[\max_{|\bar J| = m} s_{\max}^2(\bzeta_{\bar J})\Big] &\le
				\big(\sqrt{\rSigma} + \sqrt{m} +  1.81\sqrt{m \log(\nicefrac{ne}{m})}\big)^2
				+4\label{esp:2}.
	\end{align}	
	\end{lemma}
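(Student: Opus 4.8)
The plan is to deduce all four inequalities from Lemma~\ref{lem:vershynin} together with the Gaussian concentration inequality that underlies it: for any $1$-Lipschitz function $f$ of a standard Gaussian vector one has $\bfP(|f-\bfE f|>t)\le 2e^{-t^2/2}$, so $\mathrm{Var}(f)\le\int_0^\infty\min\{1,2e^{-v/2}\}\,dv=2\log 2+2\le 4$ (the Poincar\'e inequality would even give $1$). The point is that, since $\|\bSigma\|_{\textup{op}}=1$, the map $\bxi\mapsto s_{\max}(\bSigma^{1/2}\bxi)$ — and, more generally, any pointwise maximum of such maps over subsets of the columns — is $1$-Lipschitz in the standard Gaussian entries, because the operator norm is dominated by the Frobenius norm and $\|\bSigma^{1/2}\|_{\textup{op}}=1$. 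Granting this, \eqref{esp:1} is immediate: $\bfE[s_{\max}^2(\bzeta_{1:n})]=\mathrm{Var}(s_{\max}(\bzeta_{1:n}))+\big(\bfE[s_{\max}(\bzeta_{1:n})]\big)^2\le 4+(\sqrt n+\sqrt{\rSigma})^2$, using the mean bound from Lemma~\ref{lem:vershynin}.

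For \eqref{esp:1b} and \eqref{esp:1c} no variance estimate is needed, because these quantities are the positive parts of $s_{\max}^2-n$ and of $n-s_{\min}^2$, which sit below (resp.\ above) the relevant means. I would write $\bfE[(s_{\max}^2(\bzeta_{1:n})-n)_+]=\int_0^\infty\bfP\big(s_{\max}(\bzeta_{1:n})>\sqrt{n+u}\big)\,du$, bound the integrand by $1$ on $u\le 2\sqrt{n\rSigma}+\rSigma$ (the range where $\sqrt{n+u}\le\sqrt n+\sqrt{\rSigma}$) and by $e^{-(\sqrt{n+u}-\sqrt n-\sqrt{\rSigma})^2/2}$ otherwise; the substitution $v=\sqrt{n+u}$ turns the tail into $\int_0^\infty 2(\sqrt n+\sqrt{\rSigma}+s)e^{-s^2/2}\,ds=(\sqrt n+\sqrt{\rSigma})\sqrt{2\pi}+2$, which for $n\ge 8$ and $\rSigma\ge 1$ (the effective rank of a nonzero matrix is at least $1$) is absorbed into the slack $4\sqrt{n\rSigma}+3\rSigma$. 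Inequality \eqref{esp:1c} is handled identically using the identity-covariance half of Lemma~\ref{lem:vershynin} (hence $\bxi$, not $\bzeta$), splitting the integral at $u=(2\sqrt{np}-p)_+$ and separating the easy case $n\le p$ (where already $n\le\sqrt{np}$) from $n>p$.

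The substantive inequality is \eqref{esp:2}. A union bound over the $\binom nm\le(ne/m)^m$ index sets $\bar J$ lifts the single-matrix tail of Lemma~\ref{lem:vershynin} to $\bfP\big(\max_{|\bar J|=m}s_{\max}(\bzeta_{\bar J})>\sqrt m+\sqrt{\rSigma}+t\big)\le(ne/m)^m e^{-t^2/2}$. Setting $u:=\sqrt{2m\log(ne/m)}\ge\sqrt 2$, so that $(ne/m)^m e^{-u^2/2}=1$, and integrating this tail — cutting the integral for $\bfE[\max_{|\bar J|=m}s_{\max}(\bzeta_{\bar J})]$ at $\sqrt m+\sqrt{\rSigma}+u$ and using $\int_u^\infty e^{-s^2/2}\,ds\le u^{-1}e^{-u^2/2}$ beyond the cut — gives the mean bound $\bfE[\max_{|\bar J|=m}s_{\max}(\bzeta_{\bar J})]\le\sqrt m+\sqrt{\rSigma}+u+u^{-1}$. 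One checks that $u+u^{-1}\le 1.81\sqrt{m\log(ne/m)}$ whenever $m\log(ne/m)\ge 1.79$, i.e.\ for every pair $1\le m\le n$ except $(m,n)\in\{(1,1),(1,2)\}$; for those two pairs the desired bound follows at once from the monotonicity $\max_{|\bar J|=m}s_{\max}(\bzeta_{\bar J})\le s_{\max}(\bzeta_{1:n})$ and \eqref{esp:1} (with $n\le 2$). For all remaining pairs, combining the mean bound with $\mathrm{Var}(\max_{|\bar J|=m}s_{\max}(\bzeta_{\bar J}))\le 4$ gives $\bfE[\max_{|\bar J|=m}s_{\max}^2(\bzeta_{\bar J})]\le(\sqrt{\rSigma}+\sqrt m+1.81\sqrt{m\log(ne/m)})^2+4$, which is \eqref{esp:2}.

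The main obstacle is getting the additive constant in \eqref{esp:2} down to $4$. The naive approach of integrating the tail of $Y:=\max_{|\bar J|=m}s_{\max}(\bzeta_{\bar J})$ to bound $\bfE[Y^2]$ directly fails, because the change of variables $dv=2\sqrt v\,d\sqrt v$ in $\bfE[Y^2]=\int_0^\infty\bfP(Y>\sqrt v)\,dv$ generates a factor proportional to $\sqrt m+\sqrt{\rSigma}$, which can be arbitrarily large relative to the admissible constant. This is why one is forced into the $\bfE[Y^2]=(\bfE Y)^2+\mathrm{Var}(Y)$ decomposition; the remaining delicate point is that the exponent $\sqrt 2$ that comes out of $\sqrt{2\log\binom nm}$ has to be inflated to $1.81$ precisely in order to absorb the lower-order term $u^{-1}$ in the mean bound, and this inflation is just barely insufficient for the two smallest configurations, which must therefore be treated by hand.
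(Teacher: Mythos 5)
Your proposal is correct and follows essentially the same route as the paper's proof: Vershynin's singular-value bounds (\Cref{lem:vershynin}) plus the Gaussian-concentration variance bound $\le 4$, the decomposition $\bfE[Y^2]=(\bfE Y)^2+\textbf{Var}(Y)$ for \eqref{esp:1} and \eqref{esp:2}, and a union bound over the $\binom{n}{m}$ subsets with the cut-off $\asymp\sqrt{2m\log(ne/m)}$ producing the constant $1.81$. The only deviations are cosmetic: for \eqref{esp:1b}--\eqref{esp:1c} you integrate the tails directly where the paper uses the algebraic bound $\bfE[((a+Z)^2-n)_+]\le|a^2-n|+2a\bfE[Z_+]+\bfE[Z^2]$ (both yield the same intermediate quantity $2\sqrt{n\rSigma}+\rSigma+(\sqrt{n}+\sqrt{\rSigma})\sqrt{2\pi}+O(1)$), and you treat the boundary cases $(m,n)\in\{(1,1),(1,2)\}$ explicitly via monotonicity and \eqref{esp:1}, which the paper sidesteps by restricting to $m\in[2,n-1]$ in the middle of its argument.
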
 
	\begin{proof}
	The bias-variance decomposition, in conjunction with \Cref{lem:vershynin}, yields
	\begin{align}
		\bfE\big[s_{\max}^2(\bzeta_{1:n})\big] &=  
		\textbf{Var}\big(s_{\max}(\bzeta_{1:n})\big)
			+\bfE\big[s_{\max}(\bzeta_{1:n})\big]^2\\
		&\le  \textbf{Var}\big(s_{\max}(\bzeta_{1:n})\big)+
			\big(\sqrt{n} + \sqrt{\rSigma}\big)^2.
	\end{align}
	Applying the well-known fact $\bfE[Z^2] = \int_0^\infty \bfP(Z^2\ge t)\,dt$ 
	to the random variable $Z= s_{\max}(\bzeta_{1:n}) - \bfE[s_{\max}(\bzeta_{1:n})]$ 
	and using the Gaussian concentration inequality, we get
	\begin{align}
		\textbf{Var}\big(s_{\max}(\bzeta_{1:n})\big) &= \int_0^\infty \mathbf{P}
		\Big(\big|s_{\max}(\bzeta_{1:n}) - \bfE[s_{\max}(\bzeta_{1:n})]
		\big|\ge \sqrt{t}\Big)\,dt \le 2\int_0^\infty e^{-t/2}\,dt = 4.
	\end{align}
	This completes the proof of \eqref{esp:1}. 

	For every random variable $Z$ and every constant $a>0$, we have
	\begin{align}
		\bfE\big[((a + Z)^2 - n)_+\big]
			&= \bfE\big[(a^2+2aZ +Z^2-n)_+\big] \\
			&\le \bfE\big[(a^2+2aZ-n)_+\big]  + \bfE[Z^2]\\
			&\le |a^2-n| + 2a\bfE[Z_+]  + \bfE[Z^2].
	\end{align}
	Taking $a = \bfE[s_{\max}(\bzeta_{1:n})]$ and $Z = s_{\max}(\bzeta_{1:n})
	-\bfE[s_{\max}(\bzeta_{1:n})]$, we get
	\begin{align}
	\bfE\big[(s_{\max}(\bzeta_{1:n})^2-n)_+\big]
			&\le |(\sqrt{n} + \sqrt{\rSigma})^2-n| + (\sqrt{n} + \sqrt{\rSigma})
			\sqrt{2\pi}  + 4\\
			&= 2\sqrt{n\rSigma} +\rSigma + (\sqrt{n} + \sqrt{\rSigma})
			\sqrt{2\pi}  + 4\\
			&\le 6\sqrt{n\rSigma} + 4\rSigma,\qquad \forall n\ge 8.
	\end{align}
	Similarly, we have
	\begin{align}
		\bfE\big[( n - (a + Z)^2 )_+\big]
			&= \bfE\big[(n- a^2 - 2aZ  - Z^2)_+\big] 
			\le (n- a^2)_+ +2a\bfE\big[ Z_-\big].
	\end{align}
	Taking $a = \bfE[s_{\min}(\bxi_{1:n})]$ and $Z = s_{\min}(\bxi_{1:n})
	-\bfE[s_{\min}(\bxi_{1:n})]$, we get
	\begin{align}
		\bfE\big[(n-s_{\min}(\bxi_{1:n})^2)_+\big] &\le 
		(n- \bfE[s_{\min}(\bxi_{1:n})]^2)_+ +\bfE[s_{\min}(\bxi_{1:n})]
		\sqrt{2\pi}\\
		&\le (n- (\sqrt{n} - \sqrt{p})^2)_++
		(\sqrt{n} + \sqrt{p})\sqrt{2\pi}\\
		&\le 2\sqrt{np} +(\sqrt{n} + \sqrt{p})\sqrt{2\pi}\le 
		6\sqrt{np}.
	\end{align}
	Thus, we have checked \eqref{esp:1b} and \eqref{esp:1c}. 
	
	In view of \Cref{lem:vershynin}, for every $t>0$,
	\begin{align}
		\bfP\Big(s_{\max}\big(\bzeta_{\bar J}\big) 	
		< \sqrt{|\vbox to 7pt{\hbox{$\bar J$}}|} + \sqrt{\rSigma}+ t\Big)
		\ge 1-e^{-t^2/2}.\label{eq:L33b}
	\end{align}
	Let $I_0\subset [n]$ be any set of cardinality $m$. Combining 
	the relation $\bfE[Z] = \int_0^\infty \bfP(Z\ge s)\,ds$, the union bound, 
	and \eqref{eq:L33b}, we arrive at
	\begin{align}
	\bfE\Big[\max_{|\bar J| = m} s_{\max}(\bzeta_{\bar J})\Big]
			&\le \sqrt{m}+\sqrt{\rSigma}+ \bfE\Big[\max_{|\bar J| = m} 
			\big(s_{\max}(\bzeta_{\bar J}) - \sqrt{m} - \sqrt{\rSigma}\big)_+\Big]\\
			& = \sqrt{m} + \sqrt{\rSigma}+ \int_0^\infty \bfP\Big(\max_{|\bar J| = m} 
			s_{\max}(\bzeta_{\bar J})\ge \sqrt{m} +\sqrt{\rSigma} + t \Big)\,dt\\
			&\le \sqrt{m}+\sqrt{\rSigma}+ \int_0^\infty 1\bigwedge {n\choose{m}}
			\bfP\Big(s_{\max}(\bzeta_{I_0}) \ge \sqrt{m} +\sqrt{\rSigma} + t \Big)\,dt\\		
			&\le \sqrt{m} + \sqrt{\rSigma} +  
			\int_{0}^\infty \bigg\{1\bigwedge {n \choose {m}}e^{-t^2/2} 
			\bigg\}\,dt.\label{eq:L35}
	\end{align}	
	From now on, we assume that $m\in [2,n-1]$, which implies that $ {n \choose {m}} \ge n$.
	Let $t_0$ be the value of $t$ for which the two terms in the last minimum
	coincide, that is
	\begin{align}
			{n \choose{m}}e^{-t_0^2/2} = 1
			\quad&\Longleftrightarrow\quad
			t_0^2 =  2\log{n \choose{m}}
			\quad\Longrightarrow\quad
			\begin{cases}
			t_0^2 	\le 2m \log(\nicefrac{ne}{m}),\\
			t_0^2 	\ge 2 \log(\nicefrac{n(n-1)}{2}),
			\end{cases}
			\label{eq:L36}
	\end{align}
	We have, for $m\ge 1$,
	\begin{align}
			\int_{0}^\infty \bigg\{1\bigwedge {n \choose{m}}e^{-t^2/2} \bigg\}\,dt
			& = t_0 + {n \choose{m}}\int_{t_0}^\infty e^{-t^2/2}\,dt
			 \le {t_0} +  {n \choose{m}} \frac{e^{-t^2_0/2}}{t_0}\\
			& \le {t_0} +  \frac{1}{t_0}
			\le 1.81\sqrt{m \log(\nicefrac{ne}{m})}.\label{eq:L37a'}
	\end{align}
	Inequalities \eqref{eq:L35} and \eqref{eq:L37a'} yield
	\begin{align}
		\bfE\Big[\max_{|\bar J| = m}s_{\max}\big(\bzeta_{\bar J}\big)\Big]
			&\le \sqrt{m} + \sqrt{\rSigma} +  1.81\sqrt{m \log(\nicefrac{ne}{m})}.		
	\end{align}
	On the other hand, the mapping $\bzeta_{1:n}\mapsto F(\bzeta_{1:n}) : = 
	\max_{|\bar J| = m}s_{\max}\big(\bzeta_{\bar J}\big)$ being 1-Lipschitz, the 
	Gaussian concentration inequality leads to
	\begin{align}
		\textbf{Var} \Big(\max_{|\bar J| = m}s_{\max}\big(\bzeta_{\bar J}\big)\Big)
			& = \int_0^\infty \textbf{P} \Big(\big|F(\bzeta_{1:n})-\bfE
			[F(\bzeta_{1:n})]\big|\ge\sqrt{t}\Big)\,dt\le 2\int_0^\infty e^{-t/2}\,dt 
			= 4.
	\end{align}
	This completes the proof of the lemma.
	\end{proof}

	\begin{lemma} \label{lem:smaxbis}
	There is a constant $A_1>0$ such that for every pair of integers  $n\ge 8$ 
	and $p\ge 1$ and for every covariance matrix $\bSigma$ such that 
	$\lambda_{\max}(\bSigma) = 1$, we have
	\begin{align}
		\bfE\Big[\|\bzeta_{1:n}\bzeta_{1:n}^\top-n\bSigma\|_{\textup{op}}\Big] 
		&\le  A_1\big(\sqrt{n} 
			+ \sqrt{\rSigma}\big)\sqrt{\rSigma},\label{espbis:1}\\
		\bfE\Big[\lambda_{\max,+}(\bzeta_{1:n}\bzeta_{1:n}^\top-n\bSigma)\Big] 
		&\le  6\sqrt{np} + 4p,\label{espbis:2}\\
		\bfE\Big[\lambda_{\max,+}(n\bSigma - \bzeta_{1:n}\bzeta_{1:n}^\top)\Big] 
		&\le  6\sqrt{np},\label{espbis:3}
	\end{align}	
	where the last inequality is valid under the additional assumption $p\le n$. 
	Furthermore, there is a constant $A_2>0$ such that 
	\begin{align}
		\bfP\bigg(\|\bzeta_{1:n}\bzeta_{1:n}^\top-n\bSigma\|_{\textup{op}} -
		\bfE\Big[\|\bzeta_{1:n}\bzeta_{1:n}^\top-n\bSigma\|_{\textup{op}}\Big] 
		\ge A_2\Big(\sqrt{tn} + \sqrt{t\rSigma}+t\Big)\bigg)\le e^{-t},\quad \forall t\ge 1.
	\end{align}
	\end{lemma}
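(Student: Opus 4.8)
The plan is to establish the two one-sided bounds \eqref{espbis:2} and \eqref{espbis:3} by an elementary positive-semidefinite domination, and to obtain the operator-norm bound \eqref{espbis:1} together with the final deviation inequality by rescaling the sharp moment and concentration bounds for Gaussian sample covariance operators of \citep{KoltchLounici}. Throughout, write $\bzeta_i=\bSigma^{1/2}\bxi_i$ with $\bxi_1,\dots,\bxi_n$ i.i.d.\ standard Gaussian in $\RR^p$, and recall that $\|\bSigma\|_{\textup{op}}=\lambda_{\max}(\bSigma)=1$, so that $\bzeta_{1:n}\bzeta_{1:n}^\top-n\bSigma=\bSigma^{1/2}\big(\bxi_{1:n}\bxi_{1:n}^\top-n\bfI_p\big)\bSigma^{1/2}$. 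Since $\bxi_{1:n}\bxi_{1:n}^\top-n\bfI_p\preceq\big(s_{\max}(\bxi_{1:n})^2-n\big)_+\bfI_p$, conjugating by $\bSigma^{1/2}$ and using $\bSigma\preceq\bfI_p$ yields, pathwise,
\[
\bzeta_{1:n}\bzeta_{1:n}^\top-n\bSigma\ \preceq\ \big(s_{\max}(\bxi_{1:n})^2-n\big)_+\,\bSigma\ \preceq\ \big(s_{\max}(\bxi_{1:n})^2-n\big)_+\,\bfI_p ,
\]
hence $\lambda_{\max,+}\big(\bzeta_{1:n}\bzeta_{1:n}^\top-n\bSigma\big)\le\big(s_{\max}(\bxi_{1:n})^2-n\big)_+$; taking expectations and invoking \eqref{esp:1b} for the standard-Gaussian matrix $\bxi_{1:n}$, whose effective rank equals $p$, gives \eqref{espbis:2}. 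Symmetrically, $n\bfI_p-\bxi_{1:n}\bxi_{1:n}^\top\preceq\big(n-s_{\min}(\bxi_{1:n})^2\big)_+\bfI_p$, so $\lambda_{\max,+}\big(n\bSigma-\bzeta_{1:n}\bzeta_{1:n}^\top\big)\le\big(n-s_{\min}(\bxi_{1:n})^2\big)_+$, and \eqref{esp:1c} gives \eqref{espbis:3} under the assumption $p\le n$.

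For \eqref{espbis:1} and the concentration inequality the domination above is too lossy (see below), and I would instead import \citep{KoltchLounici}. With $\widehat\bSigma_n=\tfrac1n\bzeta_{1:n}\bzeta_{1:n}^\top$, the moment bound there reads $\bfE\big\|\widehat\bSigma_n-\bSigma\big\|_{\textup{op}}\le A_1\|\bSigma\|_{\textup{op}}\big(\sqrt{\rSigma/n}+\rSigma/n\big)$; multiplying by $n$ and using $\|\bSigma\|_{\textup{op}}=1$ together with $\sqrt{n\rSigma}+\rSigma=(\sqrt n+\sqrt{\rSigma})\sqrt{\rSigma}$ gives \eqref{espbis:1}. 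The accompanying sub-exponential concentration of $\|\widehat\bSigma_n-\bSigma\|_{\textup{op}}$ around its mean states that for every $t\ge1$, with probability at least $1-e^{-t}$,
\[
\big\|\widehat\bSigma_n-\bSigma\big\|_{\textup{op}}-\bfE\big\|\widehat\bSigma_n-\bSigma\big\|_{\textup{op}}\ \le\ A_2\,\|\bSigma\|_{\textup{op}}\Big(\sqrt{t/n}+\sqrt{t\rSigma}/n+t/n\Big),
\]
and multiplying through by $n$ produces exactly the tail $A_2\big(\sqrt{tn}+\sqrt{t\rSigma}+t\big)$ claimed in the lemma.

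The only genuine obstacle is recovering the \emph{effective rank} $\rSigma$, rather than the ambient dimension $p$, in \eqref{espbis:1} and in the deviation bound. The crude route via $\bzeta_i=\bSigma^{1/2}\bxi_i$ and $\big\|\bSigma^{1/2}M\bSigma^{1/2}\big\|_{\textup{op}}\le\|M\|_{\textup{op}}$ — or, equivalently, a Gaussian-concentration argument for the map $\bxi_{1:n}\mapsto\big\|\bSigma^{1/2}(\bxi_{1:n}\bxi_{1:n}^\top-n\bfI_p)\bSigma^{1/2}\big\|_{\textup{op}}$, which is $2s_{\max}(\bxi_{1:n})$-Lipschitz on the high-probability event $\{s_{\max}(\bxi_{1:n})\lesssim\sqrt n+\sqrt p\}$ — discards the alignment between the dominant eigendirections of $\bzeta_{1:n}\bzeta_{1:n}^\top$ and those of $\bSigma$, and therefore only delivers the weaker quantities $\sqrt{np}+p$ and $\sqrt{tp}$. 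Restoring $\rSigma$ requires slicing the spectrum of $\bSigma$ into geometric blocks and chaining across them, which is precisely the content of \citep{KoltchLounici}; for that reason I would simply cite their result rather than reprove it, the constants $A_1,A_2$ being universal.
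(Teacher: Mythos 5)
Your proposal is correct and follows essentially the same route as the paper: the one-sided bounds \eqref{espbis:2} and \eqref{espbis:3} are obtained by the same domination $\lambda_{\max,+}(\bzeta_{1:n}\bzeta_{1:n}^\top-n\bSigma)\le\big(s_{\max}(\bxi_{1:n})^2-n\big)_+$ (resp.\ its $s_{\min}$ counterpart) combined with \eqref{esp:1b} and \eqref{esp:1c}, while \eqref{espbis:1} and the deviation inequality are, as in the paper, direct citations of Theorems 4 and 5 of \citep{KoltchLounici} after rescaling by $n$. Your explicit spelling-out of the $\bSigma^{1/2}$-conjugation step and the remark on why the crude route loses the effective rank are just more detailed versions of what the paper states tersely.
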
 
	\begin{proof}
	Inequality \eqref{espbis:1} and the last claim of the lemma are respectively 
	Theorems 4 and 5 in \citep{KoltchLounici}. Let us prove the two other claims. 
	Since $\bzeta_i = \bSigma^{1/2}\bxi_i$ where $\bxi_i$'s are i.i.d. $\mathcal N(0,\bfI_p)$, 
	we have
	\begin{align}
		\lambda_{\max,+}(\bzeta_{1:n}\bzeta_{1:n}^\top-n\bSigma)
		&\le \lambda_{\max,+}(\bxi_{1:n}\bxi_{1:n}^\top-n\bfI_p)
		 = \big(s_{\max}(\bxi_{1:n})^2 - n\big)_+.
	\end{align}
	Inequality \eqref{espbis:2} now follows from \eqref{esp:1b}	applied in the
	case of an identity covariance matrix so that $\rSigma = p$. Similarly, 
	\eqref{espbis:3}  follows from \eqref{esp:1c} using the same argument.
	\end{proof}

	\subsection{Moments of suprema over $\mathcal W_{n,\ell}$ of weighted 
	centered Wishart matrices}
	
	\begin{lemma}\label{lemma3}
	Let $p\ge 2$, $n\ge 4\vee p$, $m\in[2,0.6n]$ and $o\le m$ be four integers. 
	It holds that
	\begin{align}
			\bfE&\Bigg[\sup_{\substack{\bw\in \mathcal W_{n,n-m+o} \\ |I| \ge n-o}} 
			\lambda_{\max,+}\Big(\bSigma - \sum_{i=1}^n (\bw_{|I})_i\bzeta_i\bzeta_i^\top\Big
			)\Bigg] \le 25\sqrt{p/n} + 33(m/n) \log({n}/{m}).
	\end{align}
	Furthermore, for any $p\ge 1$, $n\ge 1$, $m\in[2,0.6n]$ and $o\le m$,
	\begin{align}
			\bfE&\bigg[\sup_{\substack{\bw\in \mathcal W_{n,n-m+o} \\ |I| \ge n-o}}  
			\Big\|\bSigma - \sum_{i=1}^n (\bw_{|I})_i\bzeta_i\bzeta_i^\top
			\Big\|_{\textup{op}}\bigg] \le (5.1A_1 + 2.5 A_2)\sqrt{\frac{\rSigma}{n}} 
			+ 7.5A_2 \frac{m \log(\nicefrac{ne}{m})}{n},			
	\end{align}
	where $A_1$ and $A_2$ are the same constants as in \Cref{lem:smaxbis}.
	\end{lemma}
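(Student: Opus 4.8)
The plan is to reduce both suprema to a maximum over uniform weight vectors supported on $(n-m)$-subsets by means of \Cref{lemma0}, and then to estimate that maximum using the Gaussian-matrix moment bounds \eqref{esp:2}, \eqref{espbis:1}, \eqref{espbis:3} and the deviation inequality of \Cref{lem:smaxbis}. For the reduction, I would fix $I$ with $|I|\ge n-o$ and $\bw\in\mathcal W_{n,n-m+o}$; since $o\le m<n$, item iv) of \Cref{lemma0} (taken with $\ell=n-m+o$ and $\ell'=n-o$) shows $\bw_{|I}\in\mathcal W_{n,n-m}$, so each supremum in the statement is bounded by the supremum of the associated functional over the single set $\bv\in\mathcal W_{n,n-m}$. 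The maps $\bv\mapsto\lambda_{\max,+}\!\big(\bSigma-\sum_i v_i\bzeta_i\bzeta_i^\top\big)$ and $\bv\mapsto\big\|\bSigma-\sum_i v_i\bzeta_i\bzeta_i^\top\big\|_{\textup{op}}$ are convex on $\bDelta^{n-1}$, so item iii) of \Cref{lemma0} bounds each such supremum by the maximum over $|J|=n-m$ of its value at the uniform vector $\bu^{J}$. For $\bu^{J}$ one has $\sum_i(\bu^{J})_i\bzeta_i\bzeta_i^\top=\tfrac1{n-m}\bzeta_J\bzeta_J^\top$, and writing $\bar J=J^{c}$ (so $|\bar J|=m$) and $\bzeta_J\bzeta_J^\top=\bzeta_{1:n}\bzeta_{1:n}^\top-\bzeta_{\bar J}\bzeta_{\bar J}^\top$ leads to the key identity
\[
(n-m)\bSigma-\bzeta_J\bzeta_J^\top=\big(n\bSigma-\bzeta_{1:n}\bzeta_{1:n}^\top\big)+\big(\bzeta_{\bar J}\bzeta_{\bar J}^\top-m\bSigma\big).
\]

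For the first bound, subadditivity of $\lambda_{\max,+}$ together with $\bzeta_{\bar J}\bzeta_{\bar J}^\top-m\bSigma\preceq\bzeta_{\bar J}\bzeta_{\bar J}^\top$ yields $\lambda_{\max,+}\big((n-m)\bSigma-\bzeta_J\bzeta_J^\top\big)\le\lambda_{\max,+}\big(n\bSigma-\bzeta_{1:n}\bzeta_{1:n}^\top\big)+s_{\max}^2(\bzeta_{\bar J})$, where the first summand does not depend on $J$. Dividing by $n-m$, taking expectations, and invoking \eqref{espbis:3} for the first term and \eqref{esp:2} (after $\rSigma\le p$) for $\bfE[\max_{|\bar J|=m}s_{\max}^2(\bzeta_{\bar J})]$, it remains to simplify using $n-m\ge 0.4n$, $p\le n$ (hence $p/n\le\sqrt{p/n}$), $m\le m\log(ne/m)$, and $\log(ne/m)\le(1+1/\log\tfrac{5}{3})\log(n/m)$, valid for $m\le 0.6n$; this produces the constants $25$ and $33$. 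The range $n\le 7$ is trivial because $\bSigma-\sum_i w_i\bzeta_i\bzeta_i^\top\preceq\bSigma$ forces the left-hand side to be at most $1\le 25\sqrt{p/n}$ once $p\ge 2$.

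For the operator-norm bound, the triangle inequality reduces matters to bounding $\bfE\big[\|n\bSigma-\bzeta_{1:n}\bzeta_{1:n}^\top\|_{\textup{op}}\big]$, which is \eqref{espbis:1}, and $\bfE\big[\max_{|\bar J|=m}\|\bzeta_{\bar J}\bzeta_{\bar J}^\top-m\bSigma\|_{\textup{op}}\big]$. For the latter I would union-bound the deviation inequality of \Cref{lem:smaxbis} over the $\binom{n}{m}\le(ne/m)^{m}$ sets $\bar J$: choosing the threshold $t=\log\binom{n}{m}+s$ and using $\log\binom{n}{m}\le m\log(ne/m)$ turns the per-subset bound $\|\bzeta_{\bar J}\bzeta_{\bar J}^\top-m\bSigma\|_{\textup{op}}\le A_1(\sqrt m+\sqrt\rSigma)\sqrt\rSigma+A_2(\sqrt{tm}+\sqrt{t\rSigma}+t)$ into a bound holding for all $\bar J$ with probability $1-e^{-s}$, and integration in $s$ gives $\bfE[\max_{|\bar J|=m}\|\bzeta_{\bar J}\bzeta_{\bar J}^\top-m\bSigma\|_{\textup{op}}]\lesssim A_1(\sqrt{m\rSigma}+\rSigma)+A_2\big(m\sqrt{\log(ne/m)}+\sqrt{m\rSigma\log(ne/m)}+m\log(ne/m)\big)$. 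Dividing all terms by $n-m\ge 0.4n$ and repeatedly applying AM--GM together with $\rSigma\le n$ and $\log(ne/m)\ge 1$ to rewrite each term as a multiple of $\sqrt{\rSigma/n}$ or of $\tfrac{m\log(ne/m)}{n}$ yields the stated coefficients $5.1A_1+2.5A_2$ and $7.5A_2$.

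Apart from the bookkeeping of constants, the main obstacle is the union-bound step for the operator-norm version: one has to check that $t=\log\binom{n}{m}+s\ge 1$ throughout (which holds since $m\ge 2$ and $n\ge 4$ force $\binom{n}{m}\ge 6>e$) so that the deviation inequality of \Cref{lem:smaxbis} applies, and then perform the integration in $s$ tightly enough that the resulting constants fit under the claimed ones. Everything else is a routine combination of \Cref{lemma0} with the Gaussian-matrix moment estimates already established.
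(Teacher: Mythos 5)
Your proposal follows essentially the same route as the paper: the reduction via items iv) and iii) of \Cref{lemma0} to uniform weights on $(n-m)$-subsets, the split $(n-m)\bSigma-\bzeta_J\bzeta_J^\top=(n\bSigma-\bzeta_{1:n}\bzeta_{1:n}^\top)+(\bzeta_{\bar J}\bzeta_{\bar J}^\top-m\bSigma)$, then \eqref{espbis:3} and \eqref{esp:2} for the first claim, and \eqref{espbis:1} together with a union bound over the $\binom{n}{m}$ subsets applied to the deviation inequality of \Cref{lem:smaxbis} for the second; your handling of $n\le 7$ is a welcome extra, since \eqref{espbis:3} is stated only for $n\ge 8$. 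One caveat on constants: your inequality $\log(ne/m)\le(1+1/\log\frac53)\log(n/m)$ multiplies the coefficient $13/0.4=32.5$ by roughly $2.96$, so it does not yield $33$ in front of $(m/n)\log(n/m)$; the paper's own proof in fact stops at $25\sqrt{p/n}+33(m/n)\log(ne/m)$ (the $\log(n/m)$ in the lemma statement appears to be a typo), so your argument matches the paper's proof provided you state the first bound with $\log(ne/m)$ rather than trying to convert it.
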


	\begin{proof}[Proof of \Cref{lemma3}]
	For every $I\subset [n]$ such that $|I| \ge n-o$, we have
	\begin{align}
			\sup_{\bw\in \mathcal W_{n,n-m+o}} 
			\lambda_{\max,+}\Big(\bSigma - \sum_{i=1}^n (\bw_{|I})_i
			\bzeta_i\bzeta_i^\top\Big)
			&\stackrel{(a)}{\le}
			\sup_{\bw\in \mathcal W_{n,n-m}} \lambda_{\max,+}\Big(\bSigma
			-\sum_{i=1}^nw_i \bzeta_i\bzeta_i^\top \Big)\\
			&\stackrel{(b)}{\le}\sup_{|J| = n-m}\lambda_{\max,+}\Big( \sum_{i=1}^n
			u^J_i(\bSigma -\bzeta_i\bzeta_i^\top)\Big).\label{eq:L31}
	\end{align}
	In the above inequalities, $(a)$ follows from claim iv)
	of \Cref{lemma0}, while  $(b)$ is a direct consequence
	of \Cref{lemma0}, claim iii).  Thus, we can infer that for every 
	$\bw\in \mathcal W_{n,n-m+o}$,
	\begin{align}
		(n-m) \lambda_{\max,+}&\Big(\bSigma - \sum_{i=1}^n (\bw_{|I})_i
			\bzeta_i\bzeta_i^\top\Big)\le \max_{|J| = n-m}\lambda_{\max,+}
			\Big(\sum_{i\in J}(\bSigma-\bzeta_i\bzeta_i^\top)\Big)\\
		&\le  \lambda_{\max,+}\bigg(\sum_{i=1}^n(\bSigma-\bzeta_i\bzeta_i^\top
			)\bigg) + \max_{|\bar J| = m} \lambda_{\max,+}\bigg(\sum_{i\in \bar J}(
			\bzeta_i\bzeta_i^\top - \bSigma)\bigg)\\
		&\le \lambda_{\max,+}\bigg(\sum_{i=1}^n(\bSigma-\bzeta_i\bzeta_i^\top
					)\bigg) + \max_{|\bar J| = m} \bigg\{\lambda_{\max}
			\bigg(\sum_{i\in \bar J}\bxi_i\bxi_i^\top \bigg)-m\bigg\}_+,
			\label{eq:L32}
	\end{align}
	where in the last line we have used the fact that for any symmetric 
	matrix $\mathbf B$, we have $\lambda_{\max,+}(\bSigma^{1/2}\mathbf B
	\bSigma^{1/2}) \le \|\bSigma^{1/2}\|^2_{\textup{op}}\lambda_{\max,+}
	(\mathbf B)= \lambda_{\max,+}(\mathbf B)$. To analyze the last term 
	of the previous display, we note that 
	\begin{align}
		\lambda_{\max}\bigg(\sum_{i\in \bar J}\bxi_i\bxi_i^\top	\bigg)
			&= \lambda_{\max}\big(\bxi_{\bar J}\bxi_{\bar J}^\top\big) = 
			s_{\max}^2\big(\bxi_{\bar J}\big).\label{eq:L33}
	\end{align}
	On the one hand, inequality \eqref{esp:2} of \Cref{lem:smax} yields
	\begin{align}
		\bfE\bigg[\max_{|\bar J| = m} \bigg\{\lambda_{\max}
			\bigg(\sum_{i\in \bar J}\bxi_i\bxi_i^\top \bigg)-m\bigg\}_+\bigg]
			&\le \bfE\Big[\max_{|\bar J| = m}s_{\max}^2\big(\bxi_{\bar J}\big)\Big]\\
			&\le \big(\sqrt{p} + \sqrt{m} +  1.81\sqrt{m \log(\nicefrac{ne}{m})}
				\big)^2+4\\
			&\le p+ 10m\log(\nicefrac{ne}{m}) + 5.62\sqrt{pm\log(\nicefrac{ne}{m})}\\
			&\le 4p + 13m\log(\nicefrac{ne}{m}).
		\label{eq:L37c}
	\end{align}
	On the other hand, for $n\ge p$, according to \eqref{espbis:3}, 
	\begin{align}
		\lambda_{\max,+}\bigg(\sum_{i=1}^n(\bSigma-\bzeta_i\bzeta_i^\top)\bigg)
			\le 6\sqrt{np}.\label{eq:L38}
	\end{align}
	Combining \eqref{eq:L32}, \eqref{eq:L37c} and  \eqref{eq:L38}, we arrive at 
	\begin{align}
		\bfE\bigg[\sup_{\substack{\bw\in \mathcal W_{n,n-m+o} \\ |I| \ge n-o}} 
		\lambda_{\max,+}\Big(\bSigma - \sum_{i=1}^n
			(\bw_{|I})_i\bxi_i\bxi_i^\top\Big)\bigg]
		&\le \frac{6\sqrt{np} + 4p+  13m \log(\nicefrac{ne}{m})}{n-m}\\
		&\le \frac{10\sqrt{np} +  13m \log(\nicefrac{ne}{m})}{0.4n}\\
		&\le 25\sqrt{\nicefrac{p}{n}}+33(\nicefrac{m}{n})\log(\nicefrac{ne}{m}),
	\end{align}	
	and the first claim of the lemma follows.


	To prove the last claim, we repeat the arguments in \eqref{eq:L32} to check that
	for every weight vector $\bw\in \mathcal W_{n,n-m+o}$,
	\begin{align}
			(n-m) \Big\|
			\bSigma - \sum_{i=1}^n (\bw_{|I})_i\bzeta_i\bzeta_i^\top\Big\|_{\textup{op}}
			&\le  \bigg\|\sum_{i=1}^n(\bSigma-\bzeta_i\bzeta_i^\top)\bigg\|_{\textup{op}} + 
				\max_{|\bar J| = m} \bigg\|\sum_{i\in \bar J}(\bzeta_i\bzeta_i^\top - 
				\bSigma)\bigg\|_{\textup{op}}\\
			&=\big\|\bzeta_{1:n}\bzeta_{1:n}^\top-n\bSigma\big\|_{\textup{op}}
				+\max_{|\bar J| = m} \big\|\bzeta_{\bar J}\bzeta_{\bar J}^\top - 
				m\bSigma\big\|_{\textup{op}}.\label{eq:L81}
	\end{align}
	In view of \Cref{lem:smaxbis}, for every $t\ge 1$, we have 
	\begin{align}
		\bfP\bigg(\|\bzeta_{\bar J}\bzeta_{\bar J}^\top-m\bSigma\|_{\textup{op}} 	
		\ge 2A_1\sqrt{m\rSigma} + A_2\Big(\sqrt{t m} + \sqrt{t \rSigma}+ 
		t\Big)\bigg)\le e^{-t}.
	\end{align}
	Since $t\ge 1$ and $m\ge 2$, the last inequality implies
	\begin{align}
		\bfP\bigg(\|\bzeta_{\bar J}\bzeta_{\bar J}^\top-m\bSigma\|_{\textup{op}} 	
		\ge 2A_1\sqrt{m\rSigma} + A_2\big(m+\rSigma+1.5t\big)\bigg)\le e^{-t}.
	\end{align}
	To ease notation, let us set ${\sf a} =2A_1\sqrt{m\rSigma} + A_2(m +\rSigma)$ 
	and $\mathsf b = 1.5A_2$. Using the union bound, we 
	arrive at
	\begin{align}
		\bfE\bigg[\max_{|\bar J| = m} \big\|\bzeta_{\bar J}\bzeta_{\bar J}^\top - 
		m\bSigma\big\|_{\textup{op}} \bigg] 
		&= \int_0^\infty  \bfP\bigg(\max_{|\bar J| = m} \big\|\bzeta_{\bar J}
		\bzeta_{\bar J}^\top - m\bSigma\big\|_{\textup{op}} \ge s\bigg)\,ds\\
		&= \mathsf a + \mathsf b\int_0^\infty  \bfP\bigg(\max_{|\bar J| = m} 
		\big\|\bzeta_{\bar J}\bzeta_{\bar J}^\top - m\bSigma\big\|_{\textup{op}} 
		\ge \mathsf a + \mathsf b t\bigg)\,dt\\
		&\le \mathsf a + \mathsf b\int_0^\infty  1\wedge {n\choose m}\bfP
		\bigg(\big\|\bzeta_{1:m}\bzeta_{1:m}^\top - m\bSigma\big\|_{\textup{op}} 
		\ge \mathsf a + \mathsf b t\bigg)\,dt\\
		&\le \mathsf a +  \mathsf b\int_0^\infty  1\wedge {n\choose m}e^{-t}\,dt.
	\end{align}
	Splitting the last integral integral into two parts, corresponding to the intervals
	$[0,\log{n\choose m}]$ and $[\log {n\choose m},+\infty)$, we obtain
	\begin{align}
		\bfE\bigg[\max_{|\bar J| = m} \big\|\bzeta_{\bar J}\bzeta_{\bar J}^\top - 
		m\bSigma\big\|_{\textup{op}} \bigg] 
		&\le \mathsf a +  \mathsf b \log {n\choose m} + \mathsf b\\
		&\le 2A_1\sqrt{m\rSigma} + A_2\rSigma + 3A_2 m \log(\nicefrac{ne}{m}),
	\end{align}
	where in the last line we used that 
	$$
	1.5 + 1.5\log^{-1}{n\choose m} + m\log^{-1}{n\choose m}
	\le 3,\qquad \forall m\in[2,n-1],\ 
	\forall n\ge 4.
	$$
	Combining these bounds with \eqref{eq:L81}, we arrive at
	\begin{align}
			\bfE\bigg[\sup_{\mathcal W_{n,n-m+o}}\limits \Big\|\bSigma -& 
			\sum_{i=1}^n (\bw_{|I})_i\bzeta_i\bzeta_i^\top\Big\|_{\textup{op}}\bigg]\\
			&\le  \frac{A_1\sqrt{\rSigma}(\sqrt{n} + 2\sqrt{m}) + (A_1+A_2)\rSigma 
			+ 3A_2 m \log(\nicefrac{ne}{m})}{n-m}\\
			&\le (5.1A_1 + 2.5 A_2)\sqrt{\frac{\rSigma}{n}} +
			+ 7.5A_2 (m/n) \log(\nicefrac{ne}{m}).\label{eq:L85}
	\end{align}
	This completes the proof.
	\end{proof}


	\subsection{Proof of \Cref{proposition2}}
	Throughout this proof,  $\sup_{\bw,I}$ stands for the supremum 
	over all $\bw\in\mathcal W_n(\varepsilon)$ and over all
	$I\subset\{1,\ldots,n\}$ of cardinality larger than or equal to
	$n(1-\varepsilon)$. We recall that, $\Xi = \sup_{\bw,I} R(\bzeta,
	\bw,I)$ where for any subset $I$ of$\{1,\ldots,n\}$,
	\begin{align} 
		R(\bzeta,\bw, I) 
			&= 2\varepsilon_{w}+\sqrt{2\varepsilon_{w}}\,\lambda_{\max,+}^{1/2}
			\bigg(\sum_{i\in I} (\bw_{| I})_i (\bSigma - 
			\bzeta_i\bzeta_i^\top)\bigg) + (1+\sqrt{2\varepsilon_{w}}) 
			\|\bar{\bzeta}_{\bw_{|I}}\|_2.
	\end{align}
	Furthermore, as already mentioned earlier, for every $\bw\in\mathcal W_n
	(\varepsilon)$, $\varepsilon_w\le \varepsilon/(1-\varepsilon)\le 1.5\varepsilon$. 
	This implies that 
	\begin{align} 
		\bfE^{1/2}[\Xi^2]
			&\le 3\varepsilon + (1+\sqrt{3\varepsilon}) 
			\bfE^{1/2}\bigg[\sup_{\bw,I}\|\bar{\bzeta}_{\bw_{|I}}\|_2^2\bigg]\\ 
			&\qquad + \sqrt{3\varepsilon}\,\bfE^{1/2}\bigg[\sup_{\bw,I}
			\lambda_{\max,+}\bigg(\sum_{i\in I} (\bw_{| I})_i (\bSigma - 
			\bzeta_i\bzeta_i^\top)\bigg) \bigg].\label{eq:14}
	\end{align}
	As proved in \Cref{lemma2} (by taking $m=2o$ 
	and $o=n\varepsilon$),
	\begin{align} \label{eq:15}
		\bfE^{1/2}\bigg[\sup_{\bw,I}\|\bar{\bzeta}_{\bw_{|I}}\|_2^2\bigg] \le 
		{\sqrt{\rSigma/n}}\,\big(1+7\sqrt{\varepsilon}\big) + 9.2\varepsilon
		\sqrt{\log(2/\varepsilon)}.
	\end{align}
	In addition, in view of the first claim of \Cref{lemma3} (with 
	$m=2o$ and $o=n\varepsilon$), stated and proved 
	in the last section, we have
	\begin{align} \label{eq:16}
		\bfE\bigg[\sup_{\bw,I}
			\lambda_{\max,+}\bigg(\sum_{i\in I} (\bw_{| I})_i (\bSigma - 
			\bzeta_i\bzeta_i^\top)\bigg) \bigg]
			&\le 25\sqrt{p/n} + 66\varepsilon \log(1/2\varepsilon).
	\end{align}
	Combining \eqref{eq:14}, \eqref{eq:15} and \eqref{eq:16}, we get 
	\begin{align} 
		\bfE^{1/2}[\Xi^2]
			&\le 3\varepsilon + (1+\sqrt{3\varepsilon}) 
			\big(\sqrt{p/n}\,\big(1+7\sqrt{\varepsilon}\big) + 9.2\varepsilon
			\sqrt{\log(2/\varepsilon)}\big)\\
			&\qquad + \sqrt{3\varepsilon}\,\big(25\sqrt{p/n} + 66\varepsilon 
			\log(1/2\varepsilon)\big)^{1/2}\\
			&\le \sqrt{p/n}(1+16\sqrt{\varepsilon}) 
			+ 18\varepsilon\sqrt{\log(2/\varepsilon)} + 
			\sqrt{3\varepsilon}\,\big(25\sqrt{p/n} + 66\varepsilon 
			\log(1/2\varepsilon)\big)^{1/2}.
	\end{align}
	This leads to \eqref{ineq:xi2}. To obtain \eqref{ineq:xi1}, we 
	repeat the same arguments but use the second claim of 
	\Cref{lemma3} instead of the first one.
    
    \textbf{Acknowledgments.}\ 
	The work of AD was partially supported by the grant
	Investissements d'Avenir (ANR-11-IDEX-0003/Labex Ecodec/ANR-11-LABX-0047). The work of AM was supported 
	by the FAST Advance grant.

	{\renewcommand{\addtocontents}[2]{}
	\bibliography{Literature}}

    \newpage{}
    
    \addtocontents{toc}{\setcounter{tocdepth}{-1}}
    \appendix
    \section{High-probability bounds in the sub-Gaussian case}
    
    This section is devoted to the proof of \Cref{th:2}, which 
    provides a high-probability upper bound on the error of the
    iteratively reweighted mean estimator in the sub-Gaussian 
    case. We start with some technical lemmas, and postpone the
    full proof of \Cref{th:2} to the end of the present section.
    
    Let us remind some notation. We assume here that for some 
    covariance matrix $\bSigma$, we have $\bzeta_i = \bSigma^{1/2}
    \bxi_i$, for $i=1,\ldots,n$, where $\bxi_i$'s are independent 
    zero-mean with identity covariance matrix. In addition, we 
    assume that $\bxi_i$s are sub-Gaussian vectors with parameter 
    $\tau>0$, that is
	\begin{align}
	    \mathbf E[e^{\bv^\top\bxi_i}] \le 
	    e^{\nicefrac{\tau}2 \|\bv\|_2^2},\qquad 
	    \forall \bv\in\mathbb R^p.
	\end{align}
	We write $\bxi_i\stackrel{\textup{ind}}{\sim} SG_p(\tau)$. Recall 
	that if $\bxi_i$ is standard Gaussian then it is sub-Gaussian with
	parameter $1$. It is a well-known fact (see, for instance, \citep[Theorem 1.19]{RigolletHutter}) 
	that if $\bxi\sim SG_p(\tau)$
	then for all $\delta\in(0,1)$, it holds
	\begin{align}\label{normSG}
	    \mathbf P\Big(\|\bxi\|_2\ge 4\sqrt{p\tau} + 
	    \sqrt{8\tau \log(1/\delta)}\Big)\le \delta. 
	\end{align}
	In what follows, we assume that the covariance matrix $\bSigma$
	satisfies $\|\bSigma\|_{\textup{op}}=1$. 
	

    \begin{lemma}\label{lem:9}
    Let $\bar J \subset \{1,\ldots,n\}$ be a subset of cardinality $m$. 
    For every $\delta\in(0,1)$, it holds that
    \begin{align}
        \mathbf P\bigg(\bigg\|\sum_{j\in\bar J} \bzeta_j\bigg\|_2
        \le \sqrt{m\tau}\big(4\sqrt{p} + \sqrt{8\log(1/\delta)}
        \big)\bigg)\ge 1-\delta.
    \end{align}
    \end{lemma}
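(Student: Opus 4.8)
The plan is to reduce the statement about the $\bzeta_j$'s to one about the $\bxi_j$'s, and then invoke the sub-Gaussian norm bound~\eqref{normSG}. First I would observe that $\sum_{j\in\bar J}\bzeta_j = \bSigma^{1/2}\sum_{j\in\bar J}\bxi_j$, and since $\|\bSigma\|_{\textup{op}} = 1$ we have $\|\bSigma^{1/2}\|_{\textup{op}} = 1$, hence
\begin{align}
	\bigg\|\sum_{j\in\bar J}\bzeta_j\bigg\|_2 \le \bigg\|\sum_{j\in\bar J}\bxi_j\bigg\|_2.
\end{align}
So it suffices to control the right-hand side.

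Next I would show that $\bS_{\bar J} := \sum_{j\in\bar J}\bxi_j$ is itself sub-Gaussian with parameter $m\tau$, i.e.\ $\bS_{\bar J}\sim SG_p(m\tau)$. This follows from independence: for every $\bv\in\RR^p$,
\begin{align}
	\mathbf E\big[e^{\bv^\top\bS_{\bar J}}\big] = \prod_{j\in\bar J}\mathbf E\big[e^{\bv^\top\bxi_j}\big] \le \prod_{j\in\bar J} e^{\nicefrac{\tau}{2}\|\bv\|_2^2} = e^{\nicefrac{m\tau}{2}\|\bv\|_2^2}.
\end{align}
Applying~\eqref{normSG} with $\tau$ replaced by $m\tau$ then yields, for every $\delta\in(0,1)$,
\begin{align}
	\mathbf P\Big(\|\bS_{\bar J}\|_2 \ge 4\sqrt{pm\tau} + \sqrt{8m\tau\log(1/\delta)}\Big) = \mathbf P\Big(\|\bS_{\bar J}\|_2 \ge \sqrt{m\tau}\big(4\sqrt p + \sqrt{8\log(1/\delta)}\big)\Big) \le \delta.
\end{align}
Combining this with the operator-norm bound above gives the claimed inequality after taking complements.

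I do not expect any genuine obstacle here: the only non-completely-mechanical point is the additivity of the variance proxy under convolution of independent sub-Gaussian vectors, which is immediate from the product form of the moment generating function. Everything else is a direct application of~\eqref{normSG} with the scaled parameter $m\tau$.
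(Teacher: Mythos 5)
Your proof is correct and follows essentially the same route as the paper: bound $\|\sum_{j\in\bar J}\bzeta_j\|_2$ by $\|\sum_{j\in\bar J}\bxi_j\|_2$ using $\|\bSigma^{1/2}\|_{\textup{op}}=1$, note that the sum of independent $SG_p(\tau)$ vectors is $SG_p(m\tau)$, and apply \eqref{normSG} with the scaled parameter. The only difference is that you spell out the moment-generating-function product, which the paper states without computation.
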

    
    \begin{proof}[Proof of \Cref{lem:9}]
    Without loss of generality, we assume that $\bar J = \{1,\ldots,m\}$.
    On the one hand,  $\|\bSigma\|_{\textup{op}}=1$ implies that 
    \begin{align}
        \bigg\|\sum_{i=1}^m \bzeta_i\bigg\|_2 \le 
        \bigg\|\sum_{i=1}^m \bxi_i\bigg\|_2.
    \end{align}
    On the other hand, $\bxi_1+\ldots+\bxi_m\sim SG_p(m\tau)$. 
    Applying inequality \eqref{normSG} to this random variable,
    we get the claim of the lemma. 
    \end{proof}
    
	\begin{lemma}\label{lemma2bis}
	Let $p\ge 1$, $n\ge 1$, $m\in[2,0.562n]$ and $o\in [0, m]$ 
	be four positive integers. For every $\delta\in(0,1)$, with
	probability at least $1-\delta$, we have
	\begin{align}
		\sup_{\substack{\bw\in \mathcal W_{n,n-m+o} \\ 
		|I| \ge n-o}}{\|\bar{\bzeta}_{\bw_{|I}}\|}_2  
		\le 16\sqrt{\frac{\tau p}{n}} + 
		6.5\frac{m\sqrt{\tau\log(ne/m)}}{n} + 
		8\sqrt{\frac{2\tau\log(2/\delta)}{n}}.
	\end{align}
	\end{lemma}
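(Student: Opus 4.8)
The plan is to reproduce the deterministic reduction from the proof of \Cref{lemma2} and then replace the moment bounds on sums of Gaussian vectors by the high-probability bound of \Cref{lem:9} together with a union bound; this lemma is the high-probability counterpart of \Cref{lemma2}. First, claim iv) of \Cref{lemma0} gives $\bw_{|I}\in\mathcal W_{n,n-m}$ whenever $\bw\in\mathcal W_{n,n-m+o}$ and $|I|\ge n-o$, and claim iii) of \Cref{lemma0} (convexity of $\bw\mapsto\|\bar{\bzeta}_{\bw}\|_2$) then yields
\begin{align}
    \sup_{\substack{\bw\in \mathcal W_{n,n-m+o}\\ |I|\ge n-o}}\|\bar{\bzeta}_{\bw_{|I}}\|_2
    &\le\sup_{\bw\in\mathcal W_{n,n-m}}\|\bar{\bzeta}_{\bw}\|_2
    =\frac{1}{n-m}\max_{|J|=n-m}\Big\|\sum_{i\in J}\bzeta_i\Big\|_2\\
    &\le\frac{1}{n-m}\Big(\Big\|\sum_{i=1}^n\bzeta_i\Big\|_2+\max_{|\bar J|=m}\Big\|\sum_{i\in\bar J}\bzeta_i\Big\|_2\Big),
\end{align}
the last step being the triangle inequality with $\bar J=J^c$, $|\bar J|=m$.

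Next I would control the two terms on the right on separate events. Applying \Cref{lem:9} to the full index set at tolerance $\delta/2$ gives, with probability at least $1-\delta/2$, $\|\sum_{i=1}^n\bzeta_i\|_2\le\sqrt{n\tau}\,(4\sqrt p+\sqrt{8\log(2/\delta)})$. For the maximum over $m$-element subsets, I would apply \Cref{lem:9} to each such $\bar J$ at tolerance $\delta/(2{n\choose m})$ and take a union bound over the ${n\choose m}$ subsets; using $\log{n\choose m}\le m\log(ne/m)$ and $\sqrt{a+b}\le\sqrt a+\sqrt b$, this gives, with probability at least $1-\delta/2$,
\begin{align}
    \max_{|\bar J|=m}\Big\|\sum_{i\in\bar J}\bzeta_i\Big\|_2\le\sqrt{m\tau}\,\big(4\sqrt p+\sqrt{8m\log(ne/m)}+\sqrt{8\log(2/\delta)}\big).
\end{align}
On the intersection of these two events (failure probability at most $\delta$), dividing by $n-m$ bounds the supremum by
\begin{align}
    \frac{4\sqrt{\tau p}\,(\sqrt n+\sqrt m)}{n-m}+\frac{\sqrt{8\tau}\,m\sqrt{\log(ne/m)}}{n-m}+\frac{\sqrt{8\tau\log(2/\delta)}\,(\sqrt n+\sqrt m)}{n-m}.
\end{align}

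It remains only to absorb the numerical constants. Since $m\le 0.562n$ one has $n-m\ge 0.438n$ and $\sqrt n+\sqrt m\le(1+\sqrt{0.562})\sqrt n$, hence $(\sqrt n+\sqrt m)/(n-m)\le 4/\sqrt n$ and $m/(n-m)\le m/(0.438n)$; the first term is then at most $16\sqrt{\tau p/n}$, the second at most $(\sqrt 8/0.438)\,(m/n)\sqrt{\tau\log(ne/m)}\le 6.5\,(m/n)\sqrt{\tau\log(ne/m)}$, and the third at most $4\sqrt 8\,\sqrt{\tau\log(2/\delta)/n}=8\sqrt{2\tau\log(2/\delta)/n}$, which is exactly the claimed inequality. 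I do not anticipate a genuine obstacle: the argument is the same deterministic skeleton as in \Cref{lemma2}, and the only point requiring a little care is the split of the confidence budget $\delta$ between the two events together with the choice of per-subset tolerance $\delta/(2{n\choose m})$, so that the union bound produces precisely the $m\log(ne/m)$ contribution.
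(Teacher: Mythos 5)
Your proposal is correct and follows essentially the same route as the paper's proof: the same reduction via claims iii) and iv) of Lemma \ref{lemma0}, the same triangle-inequality split into the full sum and the maximum over $m$-element subsets, the same application of Lemma \ref{lem:9} with a union bound and $\log\binom{n}{m}\le m\log(ne/m)$, and the same absorption of constants using $m\le 0.562n$. The only cosmetic difference is that you organize the final arithmetic through $(\sqrt n+\sqrt m)/(n-m)$ rather than the paper's $1/(1-\sqrt\alpha)$ with $\alpha=m/n$, which is the same quantity.
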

    \begin{proof}[Proof of \Cref{lemma2bis}] 
	We have
	\begin{align}
		\sup_{\bw\in \mathcal W_{n,n-m+o}}\|\bar{\bzeta}_{\bw_{|I}}\|_2 
			& \stackrel{(a)}{\le} \sup_{\bw\in \mathcal W_{n,n-m}}
			    \|\bar{\bzeta}_{\bw}\|_2\stackrel{(b)}{\le}
			    \sup_{|J| = n-m}\big\|\bar{\bzeta}_{\bu^{J}}\big\|_2,
			\label{eq:L21}
	\end{align}
	where $(a)$ follows from claim iv)  of \Cref{lemma0} and $(b)$ is 
	a direct consequence of claim iii) of \Cref{lemma0}. Thus, we get
	\begin{align}
		\sup_{\bw\in \mathcal W_{n,n-m+o}}
		\|\bar{\bzeta}_{\bw_{|I}}\|_2
		&\le  \max_{|J| = n-m} 
			{\big\|\bar{\bzeta}_{\bu^{J}}\big\|}_2
			= \frac1{n-m}\max_{|J| = n-m}  
			\bigg\|\sum_{i\in J} \bzeta_i\bigg\|_2\\
		&\le \frac1{n-m}
			\bigg\|\sum_{i=1}^n \bzeta_i\bigg\|_2 + 
			\frac1{n-m}\max_{|\bar J| = m}  
			\bigg\|\sum_{i\in \bar J} \bzeta_i\bigg\|_2.\label{eq:L22bis}
	\end{align}
    By the union bound, for every $t\ge 0$, we have
	\begin{align}
		\bfP\bigg(\max_{|\bar J| = m}  \bigg\|\sum_{i\in \bar  J} 
			\bzeta_i\bigg\|_2> \sqrt{m\tau}\big(4\sqrt{p} + 2t\big)\bigg) 
		&\le {{n}\choose{m}} \max_{|\bar J| = m} \bfP\bigg( \bigg\| 
			\sum_{i\in\bar J} \bzeta_i\bigg\|_2 > \sqrt{m\tau}
			\big(4\sqrt{p} + 2t\big) \bigg)\\
		&\le {{n}\choose{m}} e^{-t^2/2},
	\end{align}
	where the last line follows from \Cref{lem:9}. Hence, with probability
	at least $1-\delta/2$, we have
	\begin{align}\label{lem9:eq1}
	    \max_{|\bar J| = m}  \bigg\|\sum_{i\in \bar  J} 
			\bzeta_i\bigg\|_2 \le \sqrt{m\tau}\big(4\sqrt{p} + 
			\sqrt{8m\log(ne/m)} +\sqrt{8\log(2/\delta)}\big),
	\end{align}
	where we have used the inequality $\log {n\choose{m}}\le m 
	\log (ne/m)$. Using once again \Cref{lem:8}, we check that
	with probability at least $1-\delta/2$,
	\begin{align}\label{lem9:eq2}
	    \bigg\|\sum_{i=1}^n \bzeta_i\bigg\|_2 
	    \le \sqrt{n\tau}\big(4\sqrt{p} + \sqrt{8\log(2/\delta)}\big).
	\end{align}
	Combining \eqref{eq:L22bis}, \eqref{lem9:eq1} and \eqref{lem9:eq2},
	and setting $\alpha=m/n\le 0.562$, we arrive at
	\begin{align}
	    \sup_{\bw\in \mathcal W_{n,n-m+o}}
		\|\bar{\bzeta}_{\bw_{|I}}\|_2 &\le 
		\sqrt{\frac{\tau}{n}}\bigg(\frac{4\sqrt{p}+\sqrt{8\log(2/\delta)}}{1
		-\sqrt{\alpha}}\bigg) + \frac{m\sqrt{8\tau\log(ne/m)}}{n(1-\alpha)}\\
		&\le 
		16\sqrt{\frac{\tau p}{n}} + 8\sqrt{\frac{2\tau\log(2/\delta)}{n}} + 6.5\frac{m\sqrt{\tau\log(ne/m)}}{n}
	\end{align}
	with probability at least $1-\delta$. This completes the proof of the lemma.
	\end{proof}
	
	We recall that $\bzeta_{1:n}$ is the 
	$p\times n$ random matrix obtained by concatenating the vectors 
	$\bzeta_i$. We also remind that  $s_{\min}(\bzeta_{1:n}) = 
	\lambda_{\min}^{1/2}(\bzeta_{1:n}\bzeta_{1:n}^\top)$ 
	and $s_{\max}(\bzeta_{1:n}) = \lambda_{\max}^{1/2}
	(\bzeta_{1:n}\bzeta_{1:n}^\top)$ are the smallest and the 
	largest singular values of the matrix $\bzeta_{1:n}$. 

	\begin{lemma}[\cite{vershynin_2012}, Theorem 5.39]\label{lem:vershynin:bis}
	There is a constant $C_\tau$ depending only on the variance 
	proxy $\tau$ such that for every $t>0$ and for every pair of 
	positive integers $n$ and $p$, 
	we have
	\begin{align}
		\bfP\big(s_{\min}(\bxi_{1:n})  \le \sqrt{n} - C_\tau\sqrt{p} 
		- C_\tau t\big)  \le e^{-t^2},\\
		\bfP\big(s_{\max}(\bxi_{1:n})  \ge \sqrt{n} + C_\tau\sqrt{p} 
		+ C_\tau t\big)  \le e^{-t^2}.
	\end{align}
	\end{lemma}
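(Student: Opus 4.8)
The plan is to obtain this lemma as a direct specialization of Theorem 5.39 of \citep{vershynin_2012}, the sub-Gaussian counterpart of \Cref{lem:vershynin}. First I would verify that the hypotheses match: the vectors $\bxi_1,\ldots,\bxi_n$ are independent, zero-mean, have identity covariance, and are sub-Gaussian with variance proxy $\tau$, hence they are isotropic sub-Gaussian random vectors in $\RR^p$ whose sub-Gaussian ($\psi_2$) norm is bounded by a quantity depending only on $\tau$. Consequently the $n\times p$ matrix $\bxi_{1:n}^\top$, which has the same singular values as $\bxi_{1:n}$, has $n$ independent isotropic sub-Gaussian rows, so Theorem 5.39 applies with ``number of rows'' equal to $n$ and ``ambient dimension'' equal to $p$: there are a constant $C'_\tau$ and an absolute constant $c>0$ such that, on an event of probability at least $1-2\exp(-cs^2)$,
\begin{align}
\sqrt{n}-C'_\tau\sqrt{p}-s \;\le\; s_{\min}(\bxi_{1:n}) \;\le\; s_{\max}(\bxi_{1:n}) \;\le\; \sqrt{n}+C'_\tau\sqrt{p}+s, \qquad \forall s\ge 0.
\end{align}
Choosing $s=t/\sqrt{c}$, setting $C_\tau=\max(C'_\tau,\,1/\sqrt{c})$, and splitting this two-sided inequality into its lower and upper halves (each of which then fails with probability at most $e^{-t^2}$) yields the two displayed bounds.

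If a self-contained proof were wanted, I would reproduce the standard $\varepsilon$-net argument. Fix a $1/4$-net $\mathcal N$ of the unit sphere $\mathbb S^{p-1}$ with $|\mathcal N|\le 9^p$. For a fixed $\bu\in\mathbb S^{p-1}$ we have $\|\bxi_{1:n}^\top\bu\|_2^2=\sum_{i=1}^n(\bxi_i^\top\bu)^2$; since $\bxi_i^\top\bu$ is a one-dimensional sub-Gaussian variable with proxy $\tau$ and unit variance, each $(\bxi_i^\top\bu)^2-1$ is a centered sub-exponential variable with parameters depending only on $\tau$, so Bernstein's inequality gives
\begin{align}
\prob\!\left(\big|\,\|\bxi_{1:n}^\top\bu\|_2^2-n\,\big|\ge r\right)\le 2\exp\!\big(-c_\tau\min(r^2/n,\,r)\big),\qquad \forall r\ge 0 .
\end{align}
A union bound over $\mathcal N$ shows that, with probability at least $1-2e^{-t^2}$, one has $\big|\,\|\bxi_{1:n}^\top\bu\|_2^2-n\,\big|\le C_\tau(\sqrt{np}+\sqrt{n}\,t+p+t^2)$ simultaneously for all $\bu\in\mathcal N$; dividing by $\sqrt{n+\,\cdot\,}+\sqrt{n}$ this implies $\big|\,\|\bxi_{1:n}^\top\bu\|_2-\sqrt{n}\,\big|\le C_\tau(\sqrt{p}+t)$ on the net after routine simplification. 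Finally the approximation lemma of \citep{vershynin_2012} (computing operator norms on a net) upgrades these estimates from $\mathcal N$ to the supremum and infimum over the whole sphere $\mathbb S^{p-1}$ at the cost of an absolute constant factor, giving $s_{\max}(\bxi_{1:n})\le\sqrt{n}+C_\tau(\sqrt{p}+t)$ and $s_{\min}(\bxi_{1:n})\ge\sqrt{n}-C_\tau(\sqrt{p}+t)$.

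There is no genuine obstacle here, since the statement is essentially a textbook result; the only work is bookkeeping: correctly identifying the roles of $n$ (sample size / number of rows) and $p$ (ambient dimension) when invoking Theorem 5.39, and, in the self-contained route, tracking the $\tau$-dependent constants through Bernstein's inequality so that the $\min(r^2/n,r)$-type deviation of the squared norm collapses to the clean additive form $\sqrt{n}\pm C_\tau(\sqrt{p}+t)$ for the norm itself.
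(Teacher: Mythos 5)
Your proposal matches the paper exactly: the paper states this lemma as a direct citation of Theorem 5.39 in \citep{vershynin_2012} (applied to the $n\times p$ matrix with independent isotropic sub-Gaussian rows $\bxi_i^\top$) and gives no further proof, which is precisely your first route; the conversion from the two-sided bound with probability $1-2\exp(-cs^2)$ to the stated one-sided $e^{-t^2}$ bounds is routine constant bookkeeping (the factor $2$ is absorbed by enlarging $C_\tau$, using $\sqrt{p}\ge 1$). Your optional $\varepsilon$-net/Bernstein sketch is a valid self-contained alternative but is not needed and is not what the paper does.
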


	\begin{lemma} 
	For a subset $\bar J$ of $\{1,\ldots,n\}$, we denote by 
	$\bzeta_{\bar J}$ the $p\times |\bar J|$ matrix obtained by
	concatenating the vectors $\{\bzeta_i: i\in\bar J\}$. For every 
	pair of integers  $n,p\ge 1$ and for every integer $m\in[1,n]$,
	we have
	\begin{align}
		\bfP\Big(\max_{|\bar J| = m} s_{\max}(\bzeta_{\bar J}) &\le
				\sqrt{m} +  C_\tau\big(\sqrt{p} + \sqrt{m \log(\nicefrac{ne}{m})}
				+ t\big) \Big) \ge 1 - e^{-t^2},\label{esp:3bis}
	\end{align}	
	where $C_\tau$ is the same constant as in \Cref{lem:vershynin:bis}.
	\end{lemma}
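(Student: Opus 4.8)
The plan is to reduce to the case of an identity covariance matrix and then combine the single-subset sub-Gaussian deviation bound of \Cref{lem:vershynin:bis} with a union bound over all $m$-element subsets of $\{1,\ldots,n\}$. First I would write $\bxi_{\bar J}$ for the $p\times m$ matrix with columns $\{\bxi_i:i\in\bar J\}$, so that $\bzeta_{\bar J}=\bSigma^{1/2}\bxi_{\bar J}$ and hence $s_{\max}(\bzeta_{\bar J})\le \|\bSigma^{1/2}\|_{\textup{op}}\,s_{\max}(\bxi_{\bar J})=s_{\max}(\bxi_{\bar J})$, because $\|\bSigma\|_{\textup{op}}=1$. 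It therefore suffices to prove \eqref{esp:3bis} with $\bzeta_{\bar J}$ replaced by $\bxi_{\bar J}$.

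Next, fix a subset $\bar J$ with $|\bar J|=m$. Since the $\bxi_i$ are i.i.d., the matrix $\bxi_{\bar J}$ has the same law as $\bxi_{1:m}$, so the second inequality of \Cref{lem:vershynin:bis} (applied with $n$ replaced by $m$) gives, for every $s>0$,
\begin{align}
\bfP\big(s_{\max}(\bxi_{\bar J})\ge \sqrt{m}+C_\tau\sqrt{p}+C_\tau s\big)\le e^{-s^2}.
\end{align}
Taking a union bound over the ${n\choose m}$ subsets of cardinality $m$ yields
\begin{align}
\bfP\Big(\max_{|\bar J|=m} s_{\max}(\bxi_{\bar J})\ge \sqrt{m}+C_\tau\sqrt{p}+C_\tau s\Big)\le {n\choose m}\,e^{-s^2}.
\end{align}

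Finally I would choose $s=\sqrt{m\log(\nicefrac{ne}{m})}+t$. Using $\log{n\choose m}\le m\log(\nicefrac{ne}{m})$ together with the elementary inequality $(a+b)^2\ge a^2+b^2$ valid for $a,b\ge 0$, one gets $s^2\ge m\log(\nicefrac{ne}{m})+t^2\ge \log{n\choose m}+t^2$, whence ${n\choose m}e^{-s^2}\le e^{-t^2}$. Substituting this value of $s$ and noting that $C_\tau s=C_\tau\big(\sqrt{m\log(\nicefrac{ne}{m})}+t\big)$ turns the last display into exactly \eqref{esp:3bis} (up to passing to the complementary event). There is no serious obstacle in this argument; the only mild subtlety is the choice of $s$, which has to be large enough to absorb the binomial prefactor ${n\choose m}$ into $e^{-t^2}$ while keeping the resulting $\sqrt{m\log(\nicefrac{ne}{m})}$ term on the favourable side of the inequality, and this is precisely why the split $(a+b)^2\ge a^2+b^2$ is used rather than a cruder estimate.
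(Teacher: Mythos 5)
Your proposal is correct and follows essentially the same route as the paper: a union bound over the $\binom{n}{m}$ subsets combined with the deviation bound of \Cref{lem:vershynin:bis} at level $s=\sqrt{m\log(\nicefrac{ne}{m})}+t$, absorbing the binomial factor via $\log\binom{n}{m}\le m\log(\nicefrac{ne}{m})$ and $(a+b)^2\ge a^2+b^2$. The only (harmless) difference is that you make explicit the reduction $s_{\max}(\bzeta_{\bar J})\le s_{\max}(\bxi_{\bar J})$ under $\|\bSigma\|_{\textup{op}}=1$, which the paper leaves implicit by working directly with $\bxi_{\bar J}$.
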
 
	\begin{proof}
	Using the union bound, we get
	\begin{align}
		\bfP\Big(\max_{|\bar J| = m} s_{\max}(\bxi_{\bar J}) &\ge
			\sqrt{m} +  C_\tau\big(\sqrt{p} + \sqrt{m \log( \nicefrac{ne}{m})} + t\big) \Big)\\ 
		&\le {n\choose{m}}\max_{|\bar J| = m} 
			\bfP\Big( s_{\max}(\bxi_{\bar J}) \ge \sqrt{m} +
			C_\tau\big(\sqrt{p} + \sqrt{m \log(\nicefrac{ne}{m})}+ 
			t\big) \Big)\\
		&\stackrel{(1)}{\le} {n\choose{m}} \exp\big(-\{
		\sqrt{m \log(\nicefrac{ne}{m})} +t\}^2\big)
		\stackrel{(2)}{\le} e^{-t^2},
	\end{align}	
	where in (1) above we have used the second inequality from
	\Cref{lem:vershynin:bis}, while (2) is a consequence of
	the inequality $\log{n\choose{m}}\le m\log(ne/m)$. 
	\end{proof}
    
	
	\begin{lemma}[\cite{KoltchLounici}, Theorem 9] \label{lem:smax:bis}
	There is a constant $A_3>0$ depending only on the variance proxy $\tau$ such that for every pair of integers
	$n\ge 1$ and $p\ge 1$, we have
	\begin{align}
		\bfP\bigg(\|\bzeta_{1:n}\bzeta_{1:n}^\top-n\bSigma\|_{\textup{op}}
		\ge A_3\Big(\sqrt{(p+t)n} + p + t\Big)\bigg)\le e^{-t},\quad \forall t\ge 1.
	\end{align}
	\end{lemma}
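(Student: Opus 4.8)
The statement to be proved is \Cref{lem:smax:bis}, which is Theorem~9 of \citep{KoltchLounici}; below is the route I would take for a self-contained argument. The plan is to combine a pointwise Bernstein-type inequality for sums of independent sub-exponential variables with a covering argument over the unit sphere. First I would rewrite the operator norm as a supremum of a quadratic form,
\[
\|\bzeta_{1:n}\bzeta_{1:n}^\top - n\bSigma\|_{\textup{op}} = \sup_{\bv\in\mathbb S^{p-1}}\Big|\sum_{i=1}^n\big((\bv^\top\bzeta_i)^2 - \bv^\top\bSigma\bv\big)\Big|,
\]
and then use the standard fact that for any symmetric $p\times p$ matrix $\bfM$ and any $(1/4)$-net $\mathcal N$ of $\mathbb S^{p-1}$, which may be chosen with $|\mathcal N|\le 9^p$, one has $\|\bfM\|_{\textup{op}}\le 2\max_{\bv\in\mathcal N}|\bv^\top\bfM\bv|$. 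This reduces the problem to controlling the centered quadratic form at finitely many fixed directions.

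Second, I would fix $\bv\in\mathbb S^{p-1}$ and set $u_i = \bv^\top\bzeta_i = (\bSigma^{1/2}\bv)^\top\bxi_i$. Since $\|\bSigma^{1/2}\bv\|_2^2\le\|\bSigma\|_{\textup{op}} = 1$ and $\bxi_i\sim SG_p(\tau)$, each $u_i$ is a centered scalar sub-Gaussian variable with variance proxy at most $\tau$; hence $u_i^2-\bfE[u_i^2]$ is centered sub-exponential with $\psi_1$-norm at most $C\tau$, and $\bfE[u_i^2]=\bv^\top\bSigma\bv$. Bernstein's inequality for a sum of $n$ independent sub-exponential summands then produces a constant $C_\tau$ depending only on $\tau$ such that, for every $r\ge 0$,
\[
\bfP\Big(\Big|\sum_{i=1}^n(u_i^2 - \bv^\top\bSigma\bv)\Big|\ge C_\tau(\sqrt{nr} + r)\Big)\le 2e^{-r}.
\]

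Third, I would apply the union bound over $\mathcal N$ with the choice $r = t + (\log 9)\,p + \log 2$, so that $2\cdot 9^p e^{-r}\le e^{-t}$; using $t\ge 1$ to absorb $\log 2$ and $\log 9$ into the constant gives $\sqrt{nr}+r\le C(\sqrt{(p+t)n}+p+t)$. Combined with the net comparison, this yields $\|\bzeta_{1:n}\bzeta_{1:n}^\top - n\bSigma\|_{\textup{op}}\le A_3(\sqrt{(p+t)n}+p+t)$ off an event of probability at most $e^{-t}$, with $A_3 = 2C_\tau C$. The main point requiring care is showing that the sub-exponential norm of $u_i^2$, and hence the final constant $A_3$, depends on the law of $\bxi_i$ only through $\tau$: this follows from the equivalence (up to absolute constants) between the variance-proxy bound $\bfE[e^{\bw^\top\bxi}]\le e^{\tau\|\bw\|_2^2/2}$ and a uniform bound on the $\psi_2$-norm of one-dimensional projections, together with the standard implication that $\|X\|_{\psi_2}\le K$ entails $\|X^2 - \bfE X^2\|_{\psi_1}\le C K^2$.
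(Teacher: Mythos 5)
Your argument is correct, but it is worth noting that the paper does not prove this lemma at all: it is quoted verbatim as Theorem 9 of \citep{KoltchLounici}, so your contribution is a self-contained substitute for that citation rather than a variant of an in-paper proof. Your route --- writing $\|\bzeta_{1:n}\bzeta_{1:n}^\top-n\bSigma\|_{\textup{op}}=\sup_{\bv\in\mathbb S^{p-1}}|\sum_{i=1}^n((\bv^\top\bzeta_i)^2-\bv^\top\bSigma\bv)|$, discretizing with a $(1/4)$-net of cardinality at most $9^p$ (which costs the factor $2$), applying Bernstein for sub-exponential variables to each direction (using $\|\bSigma^{1/2}\bv\|_2\le 1$, the equivalence of the variance-proxy bound with a uniform $\psi_2$ bound on one-dimensional marginals, and the $\psi_2\to\psi_1$ squaring step), and taking a union bound with $r=t+p\log 9+\log 2$ --- is the classical covering argument and each step is sound; the constant indeed depends only on $\tau$, and $t\ge 1$ lets you absorb the additive logarithms, so you recover exactly the stated bound $A_3(\sqrt{(p+t)n}+p+t)$ with failure probability $e^{-t}$. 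The trade-off relative to the cited result is that Koltchinskii--Lounici's machinery (generic chaining / dimension-free concentration) yields bounds governed by the effective rank $\rSigma$ rather than the ambient dimension $p$, which is what the paper exploits elsewhere (e.g.\ in \Cref{lem:smaxbis} and the $\rSigma$-dependent risk bounds); your net-based proof is more elementary and fully explicit, but it is intrinsically tied to $p$. Since the sub-Gaussian high-probability statement being proved here is itself formulated with $p$ (and is only used in the $n\ge p$ regime of \Cref{th:2}--\Cref{th:4}), your more elementary argument suffices for everything this lemma is used for, at the price of not generalizing to the effective-rank form.
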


	\begin{lemma}\label{lemma3:bis}
	Let $t>0$. For any $p\ge 1$, $n\ge 1$, $m\in[2,0.6n]$ and $o\le m$, with probability at least $1-2e^{-t}$,
	\begin{align}
		\sup_{\substack{\bw\in \mathcal W_{n,n-m+o} \\ |I| \ge n-o}}
		\Big\|\bSigma & - \sum_{i=1}^n (\bw_{|I})_i\bzeta_i\bzeta_i^\top
		\Big\|_{\textup{op}} \le  5A_3 \bigg( \sqrt{ \frac{p+t}{n}} + 
			\frac{p + t}{n} + \frac{m\log(ne/m)}{n}\bigg),
	\end{align}
	where $A_3$ is the same constant as in \Cref{lem:smax:bis}.
	\end{lemma}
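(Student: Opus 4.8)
The plan is to transcribe the proof of the last claim of \Cref{lemma3}, replacing the in-expectation estimate of \Cref{lem:smaxbis} by the high-probability deviation bound of \Cref{lem:smax:bis}. Its deterministic part already yields \eqref{eq:L81}, and since the right-hand side of \eqref{eq:L81} depends on neither the weight vector $\bw$ nor the index set $I$, taking the supremum over all $\bw\in\mathcal W_{n,n-m+o}$ and all $I$ with $|I|\ge n-o$ on its left-hand side shows it is enough to bound $\|\bzeta_{1:n}\bzeta_{1:n}^\top-n\bSigma\|_{\textup{op}}$ and $\max_{|\bar J|=m}\|\bzeta_{\bar J}\bzeta_{\bar J}^\top-m\bSigma\|_{\textup{op}}$, each on an event of probability at least $1-e^{-t}$, and then to divide by $n-m\ge0.4n$. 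The convexity needed to invoke claims iii) and iv) of \Cref{lemma0} in this reduction is immediate: $\bw\mapsto\|\bSigma-\sum_iw_i\bzeta_i\bzeta_i^\top\|_{\textup{op}}$ is a norm composed with an affine map. Since \Cref{lem:smax:bis} is stated for $t\ge1$, I would restrict to $t\ge1$; for $t<1$ the asserted probability $1-2e^{-t}$ is below $1-2/e$, so nothing substantial beyond the case $t=1$ is at stake.

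For the first term, \Cref{lem:smax:bis} applied with deviation level $t$ gives $\|\bzeta_{1:n}\bzeta_{1:n}^\top-n\bSigma\|_{\textup{op}}\le A_3(\sqrt{(p+t)n}+p+t)$ with probability at least $1-e^{-t}$. For the second term, I would apply \Cref{lem:smax:bis} to the $m$ i.i.d.\ vectors $\{\bzeta_i:i\in\bar J\}$ — that is, with $n$ replaced by $m$ — with deviation level $t'=t+\log\binom{n}{m}$, and take a union bound over the $\binom{n}{m}$ subsets of cardinality $m$; the requirement $t'\ge1$ holds because $m\ge2$ and $m\le0.6n$ force $n\ge4$, whence $\binom{n}{m}\ge6$. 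As $\binom{n}{m}e^{-t'}=e^{-t}$, with probability at least $1-e^{-t}$ one gets
\[
\max_{|\bar J|=m}\|\bzeta_{\bar J}\bzeta_{\bar J}^\top-m\bSigma\|_{\textup{op}}\le A_3\big(\sqrt{(p+t')m}+p+t'\big),
\]
and $t'\le t+m\log(ne/m)$ by the standard bound $\log\binom{n}{m}\le m\log(ne/m)$.

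On the intersection of these two events, which has probability at least $1-2e^{-t}$, I would add the bounds and simplify with elementary inequalities: $\sqrt{a+b}\le\sqrt a+\sqrt b$, then $\sqrt{(p+t)m}\le\sqrt{0.6}\,\sqrt{(p+t)n}$ (from $m\le0.6n$) and $m\sqrt{\log(ne/m)}\le m\log(ne/m)$ (from $\log(ne/m)\ge1$). This bounds $(n-m)$ times the supremum in question by $A_3\big((1+\sqrt{0.6})\sqrt{(p+t)n}+2(p+t)+2m\log(ne/m)\big)$, and dividing by $n-m\ge0.4n$ turns the three coefficients into $(1+\sqrt{0.6})/0.4<4.5$, $2/0.4=5$ and $2/0.4=5$, all at most $5$ — which is exactly the asserted inequality $5A_3\big(\sqrt{(p+t)/n}+(p+t)/n+m\log(ne/m)/n\big)$. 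I expect the only genuine work to be this constant bookkeeping (each of the three coefficients must land at or below $5$), together with the minor check that $t'\ge1$ so that \Cref{lem:smax:bis} is applicable inside the union bound; the structural part is a line-by-line copy of the proof of \Cref{lemma3}.
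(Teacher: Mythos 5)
Your proposal is correct and follows essentially the same route as the paper's proof: the same deterministic reduction $(n-m)\sup_{\bw,I}\|\bSigma-\sum_i(\bw_{|I})_i\bzeta_i\bzeta_i^\top\|_{\textup{op}}\le\|\bzeta_{1:n}\bzeta_{1:n}^\top-n\bSigma\|_{\textup{op}}+\max_{|\bar J|=m}\|\bzeta_{\bar J}\bzeta_{\bar J}^\top-m\bSigma\|_{\textup{op}}$ via \Cref{lemma0}, the same application of \Cref{lem:smax:bis} with the shifted level $t+\log{n\choose m}$ and a union bound over the ${n\choose m}$ subsets, and the same division by $n-m\ge 0.4n$, with your constant bookkeeping indeed landing at $5A_3$. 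The only differences are cosmetic: the paper first passes to the isotropic vectors $\bxi_i$ and uses $\sqrt{(p+t_0)m}\le 0.5(m+p+t_0)$ where you keep the square root and use $m\le 0.6n$, and it silently glosses over the $t\ge 1$ restriction in \Cref{lem:smax:bis} that you explicitly flag.
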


	\begin{proof}[Proof of \Cref{lemma3:bis}]
	For every $I\subset [n]$ such that $|I| \ge n-o$, we check that
	for every weight vector $\bw\in \mathcal W_{n,n-m+o}$,
	\begin{align}
		(n-m) \Big\|\bSigma - \sum_{i=1}^n (\bw_{|I})_i \bzeta_i 
		    \bzeta_i^\top \Big\|_{\textup{op}}
		&\le  \bigg\|\sum_{i=1}^n(\bSigma-\bzeta_i\bzeta_i^\top)
		    \bigg\|_{\textup{op}} + \max_{|\bar J| = m} \bigg\|
		    \sum_{i\in \bar J}(\bzeta_i\bzeta_i^\top - \bSigma) \bigg\|_{\textup{op}}\\
		&=\big\|\bzeta_{1:n}\bzeta_{1:n}^\top-n\bSigma\big\|_{\textup{op}}
			+\max_{|\bar J| = m} \big\|\bzeta_{\bar J}\bzeta_{\bar J
			}^\top - m\bSigma\big\|_{\textup{op}}\\
		&\le\big\|\bxi_{1:n}\bxi_{1:n}^\top-n\bfI_p\big\|_{
		    \textup{op}} +\max_{|\bar J| = m} \big\|\bxi_{
		    \bar J}\bxi_{\bar J}^\top - m \bfI_p \big\|_{\textup{op}}.\label{eq:L81bis}
	\end{align}
	On the one hand, \Cref{lem:smax:bis} implies that the inequality
	\begin{align}
	    \|\bxi_{\bar J}\bxi_{\bar J}^\top-m\bfI\|_{\textup{op}}
	    &\le A_3\big(\sqrt{(p+t)m} + p + t_0\big)\\
	    &\le 0.5A_3\big(m + 3p + 3t_0\big),
	\end{align}
	holds with probability at least $1 - e^{-t_0}$. Choosing $t_0 =
	t + \log {n\choose{m}}$ and using the union bound, 
	the last display yields	that with probability at least $1-e^{-t}$,
	\begin{align}
		\max_{|\bar J| = m} \|\bxi_{\bar J}\bxi_{\bar J}^\top -m\bfI 
		    \|_{\textup{op}}
		&\le  2A_3\Big(p + m\log(ne/m) + t\Big).
		\label{eq:L37c:bis}
	\end{align}
    Using once again \Cref{lem:smax:bis}, we  can check that the 
    inequality
    \begin{align}
		\|\bxi_{1:n}\bxi_{1:n}^\top -n\bfI 
		    \|_{\textup{op}}
		&\le  A_3\Big(\sqrt{n(p+\log(2/\delta))} + p + t\Big)
		\label{eq:L37b:bis}
	\end{align}
    holds with probability at least $1-e^{-t}$. Therefore, combining
    \eqref{eq:L81bis}, \eqref{eq:L37c:bis} and \eqref{eq:L37b:bis}, we 
    get the following inequalities hold with probability at least
    $1-2e^{-t}$:
	\begin{align}
		\sup_{\mathcal W_{n,n-m+o}}\limits \Big\|\bSigma - 
			\sum_{i=1}^n (\bw_{|I})_i\bzeta_i\bzeta_i^\top\Big\|_{\textup{op}}
			&\le  \frac{A_3}{n-m}\Big(\sqrt{n(p+t)} + 2p + 
			2t + 2m\log(ne/m)\Big)\\
			&\le  5A_3\Big(\sqrt{\frac{p+t}{n}} + 
			\frac{p + t}{n} + \frac{m\log(ne/m)}{n}\Big).
			\label{eq:L85bis}
	\end{align}
	This completes the proof.
	\end{proof}
    
    \begin{proposition}\label{proposition3}
	Let $R(\bzeta,\bw, I)$ be defined in \Cref{proposition1} and 
	let $\bzeta_1,\ldots,\bzeta_n$ be independent centered random 
	vectors with covariance matrix $\bSigma$ satisfying 
	$\lambda_{\max}(\bSigma) = 1$. Assume that $\bxi_i = 
	\bSigma^{-1/2}\bzeta_i$ are sub-Gaussian with a variance proxy
	$\tau>0$. Let $\varepsilon\le 0.28$, $p\le n$ and $\delta\in 
	(4e^{-n}, 1)$. The random variable 
	\begin{align}
		\Xi = \sup_{\bw\in \mathcal W_n(\varepsilon)} 
		\max_{|I|\ge n(1-\varepsilon)} R(\bzeta,\bw, I)
	\end{align}
	satisfies, with probability at least $1-\delta$, the inequality 
	\begin{align}
		\Xi &\le A_4\bigg(\sqrt{\frac{p + \log(4/\delta)}{n}} + 
		\varepsilon\sqrt{\log(1/\varepsilon)}\bigg), \label{ineq:xi:bis}
	\end{align}
	where $A_4>0$ is a constant depending only on the variance
	proxy $\tau$.
	\end{proposition}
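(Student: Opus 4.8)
The plan is to follow the proof of \Cref{proposition2} in its structure, substituting the in-expectation estimates by the high-probability estimates of \Cref{lemma2bis} and \Cref{lemma3:bis}. Recall first that any $\bw\in\mathcal W_n(\varepsilon)$ has $\varepsilon_w\le\varepsilon/(1-\varepsilon)\le 1.5\varepsilon$, so $2\varepsilon_w\le 3\varepsilon$ and $\sqrt{2\varepsilon_w}\le\sqrt{3\varepsilon}$. Hence, uniformly over $\bw\in\mathcal W_n(\varepsilon)$ and over $I$ with $|I|\ge n(1-\varepsilon)$,
\begin{align}
R(\bzeta,\bw, I)\le 3\varepsilon + \sqrt{3\varepsilon}\,\lambda_{\max,+}^{1/2}\bigg(\sum_{i\in I}(\bw_{|I})_i(\bSigma-\bzeta_i\bzeta_i^\top)\bigg) + (1+\sqrt{3\varepsilon})\,\|\bar{\bzeta}_{\bw_{|I}}\|_2 .
\end{align}
Taking the supremum over $(\bw,I)$, it suffices to control the two random quantities on the right. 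Following the choice made in the proof of \Cref{proposition2}, I would set $m=2o$ and $o=n\varepsilon$ (assuming $n\varepsilon$ integral, as is done elsewhere in the paper); the hypothesis $\varepsilon\le 0.28$ gives $m\le 0.56n$, which is admissible in both \Cref{lemma2bis} and \Cref{lemma3:bis}, and then $\mathcal W_{n,n-m+o}$ coincides with the feasible set $\mathcal W_n(\varepsilon)$.

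Next I would split the failure budget. Applying \Cref{lemma2bis} with tolerance level $\delta/2$, on an event $\mathcal E_1$ of probability at least $1-\delta/2$,
\begin{align}
\sup_{\bw,I}\|\bar{\bzeta}_{\bw_{|I}}\|_2\le 16\sqrt{\frac{\tau p}{n}} + 13\,\varepsilon\sqrt{\tau\log\tfrac{e}{2\varepsilon}} + 8\sqrt{\frac{2\tau\log(4/\delta)}{n}} .
\end{align}
Applying \Cref{lemma3:bis} with $t=\log(4/\delta)$ (so that $2e^{-t}=\delta/2$), on an event $\mathcal E_2$ of probability at least $1-\delta/2$, and using $\lambda_{\max,+}(\cdot)\le\|\cdot\|_{\textup{op}}$,
\begin{align}
\sup_{\bw,I}\lambda_{\max,+}\bigg(\sum_{i\in I}(\bw_{|I})_i(\bSigma-\bzeta_i\bzeta_i^\top)\bigg)\le 5A_3\bigg(\sqrt{\frac{p+t}{n}}+\frac{p+t}{n}+2\varepsilon\log\tfrac{e}{2\varepsilon}\bigg) .
\end{align}
Since $\delta>4e^{-n}$ forces $t<n$ and $p\le n$, the quantity $(p+t)/n$ is at most $2$, so $(p+t)/n\le\sqrt2\,\sqrt{(p+t)/n}$; also $\log(e/2\varepsilon)\le 2\log(1/\varepsilon)$ because $\varepsilon\le 0.28<1/e$.

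Finally, on $\mathcal E_1\cap\mathcal E_2$ (probability at least $1-\delta$), I would insert these two bounds into the inequality for $\sup_{\bw,I}R(\bzeta,\bw,I)$ and simplify using the elementary inequalities $\sqrt{ab}\le\tfrac12(a+b)$ and $\sqrt{a+b}\le\sqrt a+\sqrt b$. The only cross terms produced by $\sqrt{3\varepsilon}\,\lambda_{\max,+}^{1/2}(\cdot)$ are of the form $\sqrt{\varepsilon}\,\big((p+t)/n\big)^{1/4}$ and $\sqrt{\varepsilon}\cdot\sqrt{\varepsilon\log(e/2\varepsilon)}$: the first is bounded by $\tfrac12\varepsilon+\tfrac12\sqrt{(p+t)/n}\le\tfrac12\varepsilon\sqrt{\log(1/\varepsilon)}+\tfrac12\sqrt{(p+t)/n}$ (again using $\log(1/\varepsilon)\ge1$), and the second is exactly of order $\varepsilon\sqrt{\log(1/\varepsilon)}$. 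Collecting the remaining terms and absorbing $\sqrt{p/n}+\sqrt{\log(4/\delta)/n}$ into $\sqrt{(p+\log(4/\delta))/n}$, all constants depend only on $\tau$ (through $A_3$ and the absolute constants of \Cref{lemma2bis}), which yields \eqref{ineq:xi:bis} with a suitable $A_4=A_4(\tau)$.

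I expect the only genuinely delicate part to be the bookkeeping: splitting the probability $\delta$ into the two per-lemma budgets, matching $t=\log(4/\delta)$ with the factor $2e^{-t}$ in \Cref{lemma3:bis}, and keeping $(p+t)/n$ bounded so that its powers above $\tfrac12$ can be dominated by its square root — this is precisely where the hypotheses $p\le n$ and $\delta>4e^{-n}$ enter. The rest is a routine transcription of the argument proving \Cref{proposition2}.
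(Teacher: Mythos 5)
Your proposal is correct and follows essentially the same route as the paper's own proof: the same bound $\varepsilon_w\le 1.5\varepsilon$ on the decomposition of $R(\bzeta,\bw,I)$, the same choice $m=2o$, $o=n\varepsilon$ in \Cref{lemma2bis} and \Cref{lemma3:bis}, and the same use of $p\le n$ and $\delta>4e^{-n}$ to dominate $(p+t)/n$ by its square root before collecting terms. The only differences are cosmetic bookkeeping (splitting the budget as $\delta/2$ per lemma rather than taking $t$ with failure probability $4e^{-t}$ and then setting $t=\log(4/\delta)$), and you even make explicit the AM--GM absorption of the cross term that the paper leaves implicit.
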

	
    \begin{proof}[Proof of \Cref{proposition3}]
	We recall that, for any subset $I$ of $\{1,\ldots,n\}$,
	\begin{align} 
		R(\bzeta,\bw, I) 
			&= 2\varepsilon_{w}+\sqrt{2\varepsilon_{w}}\,\lambda_{\max,+}^{1/2}
			\bigg(\sum_{i\in I} (\bw_{| I})_i (\bSigma - 
			\bzeta_i\bzeta_i^\top)\bigg) + (1+\sqrt{2\varepsilon_{w}}) 
			\|\bar{\bzeta}_{\bw_{|I}}\|_2.
	\end{align}
	Furthermore, for every $\bw\in\mathcal W_n
	(\varepsilon)$, $\varepsilon_w\le \varepsilon/(1-\varepsilon)\le 1.5\varepsilon$. 
	This implies that 
	\begin{align} 
		\Xi &\le 3\varepsilon 
			+ \sqrt{3\varepsilon}\,\sup_{\bw,I}
			\lambda_{\max,+}\bigg(\sum_{i\in I} (\bw_{| I})_i (\bSigma - 
			\bzeta_i\bzeta_i^\top)\bigg)^{1/2}
			+ (1+\sqrt{3\varepsilon}) 
			\sup_{\bw,I}\|\bar{\bzeta}_{\bw_{|I}}\|_2,\label{eq:14b}
	\end{align}
	where $\sup_{\bw,I}$ is the supremum over all $\bw\in\mathcal
	W_n(\varepsilon)$ and over all $I\subset\{1,\ldots,n\}$ of 
	cardinality larger than or equal to $n(1-\varepsilon)$. As proved 
	in \Cref{lemma2bis} (by taking $m=2o$ and $o=n\varepsilon$), with
	probability at least $1-2e^{-t}$,
	\begin{align} \label{eq:15b}
		\sup_{\bw,I}\|\bar{\bzeta}_{\bw_{|I}}\|_2 \le 
		16\sqrt{\frac{\tau p}{n}} + 13\varepsilon\sqrt{\tau\log(2/\varepsilon)} + 
		8\sqrt{\frac{2t\tau}{n}}.
	\end{align}
	In addition, in view of \Cref{lemma3:bis} (with $m=2o$ and
	$o=n\varepsilon$), with probability at least $1-2e^{-t}$, we have
	\begin{align} 
		\sup_{\bw,I}\lambda_{\max,+}\bigg(\sum_{i\in I} (\bw_{| I})_i
		    (\bSigma - \bzeta_i\bzeta_i^\top)\bigg)
		&\le 5A_3 \bigg( \sqrt{ \frac{p+t}{n}} + 
			\frac{p + t}{n} + 2\varepsilon\log(2/\varepsilon)\bigg)\\
		&\le 13 A_3 \bigg( 
			\sqrt{\frac{p + t}{n}} + \varepsilon\log(2/\varepsilon)\bigg),
			\label{eq:16b}	
	\end{align}
	where in the last inequality we have used that $p\le n$ and $t\le n$. 
	Combining \eqref{eq:14b}, \eqref{eq:15b} and \eqref{eq:16b}, we 
	obtain that the inequality
	\begin{align} 
		\Xi &\le A_4\bigg(\sqrt{\frac{p + t}{n}} + \varepsilon
		\sqrt{\log(1/\varepsilon)}\bigg)
	\end{align}
	hold true with probability at least $1-4e^{-t}$, for every $t\in[0,n]$, where $A_4$ is a constant depending only
	on the variance proxy $\tau$. Replacing $t$ by $\log(4/\delta)$, 
	we get the claim of the proposition.
	\end{proof}

\begin{proof}[Proof of \Cref{th:2}]
    The first step of the proof consists in establishing 
    a recurrent inequality relating the estimation error 
    of $\hat\bmu^{k}$ to that of $\hat\bmu^{k-1}$. More 
    precisely, we show that this error decreases at a 
    geometric rate, up to a small additive error term. 
    Without loss of generality, we assume that 
    $\|\bSigma\|_{\textup{op}} = 1$.
    
    Let us recall the notation
    \begin{align}
        \mathcal W_n(\varepsilon) = \Big\{\bw\in \bDelta^{n-1}: 
        \max_i w_i\le \frac1{n(1-\varepsilon)}\Big\}
    \end{align}
    and
    \begin{align}
        R(\bzeta,\bw,I) = 2\varepsilon_w + \sqrt{2\varepsilon_w}
        \lambda^{1/2}_{\max,+}\bigg(\sum_{i\in I} (
        \bw_{|I})_i(\bSigma - \bzeta_i\bzeta_i^\top)\bigg)
        +(1+\sqrt{2\varepsilon_w})\|\bar\bzeta_{\bw|I}\|_2.
    \end{align}
    For every $k\ge 1$, in view of \Cref{proposition1} and 
    \eqref{eq:minG}, we have 
    \begin{align}
        \|\hat\bmu^{k}-\bmu^*\|_2 
        &\le \frac{\sqrt{\varepsilon_k}}{
        1-\varepsilon_k} G(\hat\bw^k,\hat\bmu^{k})^{1/2} + \Xi\\
        &\le \frac{\sqrt{\varepsilon_k}}{
        1-\varepsilon_k} G(\hat\bw^k,\hat\bmu^{k-1})^{1/2} + \Xi,\label{mu:k}
    \end{align}
    where 
    \begin{align}\label{epsk}
        \varepsilon_k = \sum_{i\not\in\mathcal I} w_i^{k}\le 
    \frac{\varepsilon n }{n-n\varepsilon} = \frac{\varepsilon}{1-
    \varepsilon}
    \end{align}
    and
    \begin{align}
        \Xi = \max_{|I|\ge n-n\varepsilon} \sup_{\bw\in \mathcal W_n
        (\varepsilon)} R(\bzeta,\bw,I).
    \end{align}
    Inequality \eqref{epsk} and the cat that the function $x\mapsto 
    \sqrt{x}/(1-x)$ is increasing for $x\in[0,1)$ imply that
    \begin{align}
        \frac{\sqrt{\varepsilon_k}}{
        1-\varepsilon_k} \le \frac{\sqrt{\varepsilon(1-\varepsilon)}}{
        1-2\varepsilon} = \alpha_\varepsilon.
    \end{align}
    Using this inequality and the fact that $\hat\bw^k$ is a minimizer
    of $G(\cdot, \hat\bmu^{k-1})$, we infer from 
    \eqref{mu:k} that
    \begin{align}\label{mu:k1}
        \|\hat\bmu^{k}-\bmu^*\|_2 &\le \alpha_\varepsilon
        G(\hat\bw^k,\hat\bmu^{k-1})^{1/2} + \Xi\\
        &\le \alpha_\varepsilon
        G(\bw^*_{\mathcal I},\hat\bmu^{k-1})^{1/2} + \Xi,
    \end{align}
     where $\bw^*_{\mathcal I}$ is the weight vector defined by 
    $w^*_i = \mathds 1(i\in\mathcal I)/|\mathcal I|$.
    One can check that
	\begin{align}
		\sum_{i=1}^n  w_i^* (\bs X_i-\hat{\bmu}^{k-1})^{\otimes 2} &= 
		\sum_{i=1}^n w_i^*(\bs X_i-\bar{\bs X}_{\bw^*_{\mathcal I}})^{\otimes 2} 
		+ (\bar{\bs X}_{\bw^*_{\mathcal I}}-\hat{\bmu}^{k-1})^{\otimes 2}\\
		&\preceq \sum_{i=1}^n w_i^*(\bs X_i-\bar{\bs X}_{\bw^*_{\mathcal I}})^{\otimes 2}
		+ \big\|\bar{\bs X}_{\bw^*_{\mathcal I}}-\hat{\bmu}^{k-1}\big\|_2^2\,\bfI_p\\
		&\preceq \sum_{i=1}^n w_i^*(\bs X_i-\bmu^*)^{\otimes 2}
		 + \big\|\bar{\bs X}_{\bw^*_{\mathcal I}}-\hat{\bmu}^{k-1}\big\|_2^2\,\bfI_p.
	\end{align}	
	This readily yields
	\begin{align}
		G(\bw^*_{\mathcal I},\hat{\bmu}^{k-1}) &\le \lambda_{\max,+}
		    \bigg(\sum_{i=1}^n 
			w_i^*(\bs X_i-\bmu^*)^{\otimes 2} - \bSigma\bigg) + 
			\big\|\bar{\bs X}_{\bw^*_{\mathcal I}}-\hat{\bmu}^{k-1}\big\|_2^2\\
			&\le G(\bw^*_{\mathcal I}, \bmu^*)  + 
			\big(\big\|\bar{\bzeta}_{\bw^*_{\mathcal I}}\big\|_2 + 
			\big\|\bmu^*-\hat{\bmu}^{k-1}\big\|_2\big)^2.
	\end{align}
	Combining with inequality \eqref{mu:k1}, we arrive at
	\begin{align}
        \|\hat\bmu^{k}-\bmu^*\|_2 
            &\le \alpha_\varepsilon G(\bw^*_{\mathcal I}, \bmu^*)^{1/2}  
            + \alpha_\varepsilon\big\|\bar{\bzeta}_{
            \bw^*_{\mathcal I}}\big\|_2 + \alpha_\varepsilon
			\big\|\bmu^*-\hat{\bmu}^{k-1}\big\|_2 + \Xi.
    \end{align}    
    Thus, we have obtained the desired recurrent inequality
    \begin{align}\label{mu:k3}
        \|\hat\bmu^{k}-\bmu^*\|_2 
		&\le \alpha_\varepsilon \big\|\hat{\bmu}^{k-1}-\bmu^*\big\|_2
		+\tilde\Xi,
    \end{align}
    with 
    \begin{align}
        \tilde\Xi  &= \alpha_\varepsilon G(\bw^*_{\mathcal I}, 
        \bmu^*)^{1/2} + \alpha_\varepsilon\big\|\bar{\bzeta}_{
            \bw^*_{\mathcal I}}\big\|_2 + \Xi\\
        &\le \sqrt{5\varepsilon}\,\sup_{\bw,I}
			\bigg\|\sum_{i\in I} (\bw_{| I})_i (\bSigma - 
			\bzeta_i\bzeta_i^\top)\bigg\|_{\rm op}^{1/2} + \sqrt{5\varepsilon}\,\big\|\bar{\bzeta}_{
        \bw^*_{\mathcal I}}\big\|_2 + \Xi.\label{xi:tilde}
    \end{align}
    The last inequality above follows from the inequality
    $\alpha_\varepsilon\le \sqrt{5\varepsilon}$ as soon
    as $\varepsilon\le 0.3$. Unfolding the recurrent inequality
    \eqref{mu:k3}, we get
    \begin{align}\label{mu:K1}
        \|\hat\bmu^{K}-\bmu^*\|_2 
		&\le \alpha_\varepsilon^K \big\|\hat{\bmu}^{0}-\bmu^*\big\|_2
		+\frac{\tilde\Xi}{1-\alpha_\varepsilon}.
    \end{align}
    In view of \Cref{lemma0} and the condition $\varepsilon
    \le 0.3$, we get
    \begin{align}\label{mu:K2}
        \|\hat\bmu^{K}-\bmu^*\|_2 
		&\le \frac{5\alpha_\varepsilon^K}{n} \sum_{i=1}^n 
		\big\|\bzeta_i\big\|_2 +\frac{\tilde\Xi}{1-\alpha_\varepsilon}.
    \end{align}
    Simple algebra yields
    \begin{align}
        \frac1n\sum_{i=1}^n \big\|\bzeta_i\big\|_2
        &\le \bigg(\frac1n\sum_{i=1}^n \big\|\bxi_i\big\|_2^2\bigg)^{1/2}
        \le \sqrt{p} + \bigg(\frac1n\sum_{i=1}^n (\big\|\bxi_i\big\|_2^2-p)\bigg)^{1/2}\\
        &= \sqrt{p} + \bigg(\frac{\tr(\bxi_{1:n}\bxi_{1:n}^\top
        -\bfI_p)}n\bigg)^{1/2}\\
        &\le \sqrt{p} + \sqrt{p/n}\,\big\|\bxi_{1:n}\bxi_{1:n}^\top
        -\bfI_p\big\|_{\rm op}^{1/2}.
    \end{align}
    Thus, in view of \Cref{lem:smaxbis}, with probability
    at least $1-\delta$, 
    \begin{align}
        \frac1n\sum_{i=1}^n \big\|\bzeta_i\big\|_2
        &\le \sqrt{p}\bigg(1 +\sqrt{2A_3} +  \sqrt{\frac{1.5 A_3\log(1/\delta)}{n}}\bigg).
    \end{align}
    Since $\delta\ge 4e^{-n}$ and $K$ is chosen in such a way
    that $\alpha_\varepsilon^K \le 0.5\varepsilon/\sqrt{p}$, 
    we get, on an event of probability at least $1-\delta$,
    \begin{align}
        \|\hat\bmu^{K}-\bmu^*\|_2 
		&\le 0.5 \varepsilon(1+3A_3)
		+\frac{\tilde\Xi}{1-\alpha_\varepsilon}.
    \end{align}
    Combining \eqref{xi:tilde} with \Cref{proposition3},
    \Cref{lem:9} and \Cref{lemma3:bis}, we check that for some
    constant $A_5$ depending only on the variance proxy $\tau$,
    the inequality
    \begin{align}
        \|\hat\bmu^{K}-\bmu^*\|_2 
		&\le \frac{A_5}{1-\alpha_\varepsilon}
		\Bigg(\sqrt{\frac{p+\log(4/\delta)}{n}} + \varepsilon\sqrt{\log(1/\varepsilon)}\Bigg)
    \end{align}
    holds with probability at least $1-4\delta$. 
    This completes the proof of the theorem. 
    \end{proof}

    \section{Proof of adaptation to unknown contamination rate}
    
    This section provides a proof of \Cref{th:3}. 
	Let $\varepsilon$ be the true contamination rate, which means 
	that the distribution $\bfPn\in\textup{SGAC}(\bmu^*,\bSigma,
	\varepsilon)$. Let $\ell^*$ be the largest value of $\ell$
	such that the corresponding element $\varepsilon_\ell$ of the 
	grid is larger than or equal to $\varepsilon$. Since 
	$\varepsilon$ is assumed to be smaller than $\varepsilon_0 a$, 
	the following inequalities hold
	\begin{align}\label{chain:eps}
	    \varepsilon_0 > \varepsilon_{\ell^*} \ge \varepsilon 
	    \ge \varepsilon_{\ell^*}a.
	\end{align}
	Let us introduce the events $\Omega_j = \{\bmu^*\in\mathbb B
	(\hat\bmu_n^{\textup{IR}}(\varepsilon_j),R_\delta(
	\varepsilon_j))\}$. Since $\bfPn\in \textup{SGAC}(\bmu^*,
	\bSigma,\varepsilon)\subseteq \textup{SGAC}(\bmu^*,\bSigma,
	\varepsilon_j)$ for all $j \le \ell^*$. We infer 
	from \Cref{th:2} that $\bfP (\Omega_j)\ge 1-4\delta/
	\ellmax$ for every $j\le \ell^*$. Hence, by	the union 
	bound,  with probability at least $1-4\delta$, we have
	\begin{align}\label{ell*}
	    \bmu^*\in \bigcap_{j=1}^{\ell_*} \mathbb B
	\big(\hat\bmu_n^{\textup{IR}}(\varepsilon_j),R_\delta(
	\varepsilon_j)\big).
	\end{align} 
	From now on, we assume that this event is realized. Clearly,
	this implies that $\hat\ell\ge \ell^*$ and, therefore, 
	\begin{align}
	\mathbb B
	\big(\hat\bmu_n^{\textup{IR}}(\varepsilon_{\ell^*}),R_\delta(
	\varepsilon_{\ell^*})\big) \cap 
	\mathbb B
	\big(\hat\bmu_n^{\textup{AIR}},R_\delta(
	\varepsilon_{\hat\ell})\big)\neq \varnothing\qquad 
	\text{and}\qquad \varepsilon_{\hat\ell}\le \varepsilon_{\ell^*}. 
	\end{align}
	Using the triangle inequality, we get 
	\begin{align}\label{AIR:IR}
	    \|\hat\bmu_n^{\textup{AIR}} - \hat\bmu_n^{\textup{IR}}
	    (\varepsilon_{\ell^*})\|_2 \le 
	    R_\delta(\varepsilon_{\ell^*}) + 2R_\delta
	    (\varepsilon_{\hat\ell})
	    \le 2R_\delta
	    (\varepsilon_{\ell^*}) \le 
	    2R_\delta
	    (\varepsilon/a),
	\end{align}
	where the last inequality follows from \eqref{chain:eps} and the
	fact that $z\mapsto R_\delta(z)$ is an increasing function. 
	Combining \eqref{AIR:IR} and \eqref{ell*}, we get
	\begin{align}
	    \|\hat\bmu_n^{\textup{AIR}} - \bmu^*\|_2 \le 
	    \|\hat\bmu_n^{\textup{AIR}} - \hat\bmu_n^{\textup{IR}}
	    (\varepsilon_{\ell^*})\|_2 + \|\hat\bmu_n^{\textup{IR}}
	    (\varepsilon_{\ell^*})-\bmu^*\|_2 \le 
	    3R_\delta (\varepsilon/a). 
	\end{align}
	This completes the proof of \Cref{th:3}. 

\section{Proof in the case of unknown $\bSigma$}
    
    This section is devoted to the proof of \Cref{th:4}.
    We follow exactly the same steps as in the proof of
    \Cref{th:2}. Without loss of generality, we assume 
    that $\|\bSigma\|_{\textup{op}} \le 1$. For every 
    $k\ge 1$, in view of \Cref{proposition1} and 
    \eqref{eq:minG}, we have 
    \begin{align}
        \|\hat\bmu^{k}-\bmu^*\|_2 
        &\le \frac{\sqrt{\varepsilon_k}}{
        1-\varepsilon_k} G(\hat\bw^k,\hat\bmu^{k})^{1/2} + \Xi
        \le \frac{\sqrt{\varepsilon_k}}{
        1-\varepsilon_k} G(\hat\bw^k,\hat\bmu^{k-1})^{1/2} + \Xi,\label{mu:kbis}
    \end{align}
    where 
    \begin{align}\label{epskbis}
        \varepsilon_k = \sum_{i\not\in\mathcal I} w_i^{k}\le 
    \frac{\varepsilon n }{n-n\varepsilon} = \frac{\varepsilon}
    {1-\varepsilon}
    \end{align}
    and $\Xi = \max_{|I|\ge n-n\varepsilon} \sup_{\bw\in 
    \mathcal W_n(\varepsilon)} R(\bzeta,\bw,I)$.
    
    Inequality \eqref{epskbis} and the cat that the function
    $x\mapsto \sqrt{x}/(1-x)$ is increasing for $x\in[0,1)$ 
    imply that
    \begin{align}
        \frac{\sqrt{\varepsilon_k}}{
        1-\varepsilon_k} \le \frac{\sqrt{\varepsilon(1-\varepsilon)}}{
        1-2\varepsilon} = \alpha_\varepsilon.
    \end{align}
    Using this inequality and the obvious inequality 
    $G(\bw,\bmu)\le \|\sum_{i=1}^n w_i(\bX_i-\bmu)^{\otimes 2}\|^{1/2}_{\textup{op}}$, we infer from
    \eqref{mu:kbis} that
    \begin{align}
        \|\hat\bmu^{k}-\bmu^*\|_2 &\le \alpha_\varepsilon
        G(\hat\bw^k,\hat\bmu^{k-1})^{1/2} + \Xi\\
        &\le \alpha_\varepsilon
        \bigg\|\sum_{i=1}^n \hat w_i^{k}(\bX_i-\hat\bmu^{k-1}
        )^{\otimes 2}\bigg\|_{\textup{op}}^{1/2} + \Xi.
    \end{align}
    Then, using the fact that $\hat\bw^k$ is the minimizer
    of $\bw\mapsto\big\|\sum_{i=1}^n  w_i(\bX_i-\hat\bmu^{k-1}
    )^{\otimes 2}\big\|_{\textup{op}}$, we get
    \begin{align}
        \|\hat\bmu^{k}-\bmu^*\|_2 &\le \alpha_\varepsilon
        G(\hat\bw^k,\hat\bmu^{k-1})^{1/2} + \Xi\\
        &\le \alpha_\varepsilon\bigg\| \sum_{i=1}^n
        w_i^*(\bX_i-\hat\bmu^{k-1})^{\otimes 2} 
        \bigg\|_{\textup{op}}^{1/2} + \Xi,\label{mu:k1bis}
    \end{align}
    where $\bw^*_{\mathcal I}$ is the weight vector defined by 
    $w^*_i = \mathds 1(i\in\mathcal I)/|\mathcal I|$. Simple
    algebra (see the proof of \Cref{th:2} for more details) 
    yields
	\begin{align}
		\bigg\|\sum_{i=1}^n w_i^*(\bX_i-\hat\bmu^{k-1}
        )^{\otimes 2}\bigg\|_{\textup{op}}^{1/2} 
        &\le \bigg\|\sum_{i=1}^n w_i^*(\bX_i-\bmu^*)^{\otimes
        2}\bigg\|_{\textup{op}}^{1/2} + \big\|\bar{
        \bs X}_{\bw^*_{\mathcal I}}-\hat{\bmu}^{k-1}\big\|_2\\
        &\le\bigg\|\sum_{i=1}^n w_i^*(\bX_i-\bmu^*)^{\otimes 2} 
        \bigg\|_{\textup{op}}^{1/2}
        + \big\|\bar{\bzeta}_{\bw^*_{\mathcal I}}\big\|_2 + 
			\big\|\bmu^*-\hat{\bmu}^{k-1}\big\|_2\\
		&\le G(\bw^*_{\mathcal I}, \bmu^*)^{1/2} + \|\bSigma
		    \|_{\textup{op}} +\big\|\bar{\bzeta}_{\bw^*_{
		    \mathcal I}}\big\|_2 + \big\| \bmu^* - \hat{\bmu}^{k-1}
		    \big\|_2.
	\end{align}
	Combining with inequality \eqref{mu:k1bis}, we arrive at
	\begin{align}
        \|\hat\bmu^{k}-\bmu^*\|_2 
            &\le \alpha_\varepsilon G(\bw^*_{\mathcal I}, \bmu^*)^{1/2}  + \alpha_\varepsilon 
            + \alpha_\varepsilon\big\|\bar{\bzeta}_{
            \bw^*_{\mathcal I}}\big\|_2 + \alpha_\varepsilon
			\big\|\bmu^*-\hat{\bmu}^{k-1}\big\|_2 + \Xi.
    \end{align}    
    Thus, we have obtained the desired recurrent inequality
    \begin{align}\label{mu:k3bis}
        \|\hat\bmu^{k}-\bmu^*\|_2 
		&\le \alpha_\varepsilon \big\|\hat{\bmu}^{k-1}-\bmu^*\big\|_2
		+\alpha_\varepsilon + \tilde\Xi,
    \end{align}
    with 
    \begin{align}
        \tilde\Xi  &= \alpha_\varepsilon G(\bw^*_{\mathcal I}, 
        \bmu^*)^{1/2} +  \alpha_\varepsilon 
        \big\|\bar{\bzeta}_{\bw^*_{\mathcal I}}\big\|_2 + \Xi\\
        &\le \sqrt{5\varepsilon}\,\sup_{\bw,I}\bigg\|\sum_{i\in I}
            (\bw_{| I})_i (\bSigma - \bzeta_i\bzeta_i^\top) \bigg\|_{\rm op}^{1/2} 
			+ \sqrt{5\varepsilon}\,\big\|\bar{\bzeta}_{
            \bw^*_{\mathcal I}}\big\|_2 + \Xi.\label{xi:tildebis}
    \end{align}
    The last inequality above follows from the inequality
    $\alpha_\varepsilon\le \sqrt{5\varepsilon}$ as soon
    as $\varepsilon\le 0.3$. Unfolding the recurrent inequality
    \eqref{mu:k3bis}, we get
    \begin{align}\label{mu:K1bis}
        \|\hat\bmu^{K}-\bmu^*\|_2 
		&\le \alpha_\varepsilon^K \big\|\hat{\bmu}^{0}-\bmu^*\big\|_2
		+\frac{\alpha_\varepsilon + \tilde\Xi}{1-\alpha_\varepsilon}.
    \end{align}
    In view of \Cref{lemma0} and the condition $\varepsilon
    \le 0.3$, we get
    \begin{align}\label{mu:K2bis}
        \|\hat\bmu^{K}-\bmu^*\|_2 
		&\le \frac{5\alpha_\varepsilon^K}{n} \sum_{i=1}^n 
		\big\|\bzeta_i\big\|_2 +\frac{\alpha_\varepsilon 
		+\tilde\Xi}{1-\alpha_\varepsilon}.
    \end{align}
    We have already seen that with probability at least 
    $1-4\delta$,
    \begin{align}
        \frac{5\alpha_\varepsilon^K}{n} \sum_{i=1}^n 
		\big\|\bzeta_i\big\|_2 +\frac{\tilde\Xi}{1- 
		\alpha_\varepsilon}
		\le  \frac{A_5}{1-\alpha_\varepsilon}
		\Bigg(\sqrt{\frac{p+\log(4/\delta)}{n}} + \varepsilon\sqrt{\log(1/\varepsilon)}\Bigg),
    \end{align}
    where $A_5>1$ is a constant depending only on $\tau$. 
    In addition, we have $\alpha_\varepsilon \le \sqrt{
    5\varepsilon}$ and $\varepsilon\sqrt{\log(1/\varepsilon)}
    \le 0.6 \sqrt{\varepsilon}$, which lead to 
    $\alpha_\varepsilon + A_5\varepsilon\sqrt{\log(1/
    \varepsilon)}\le 3A_5\sqrt{\varepsilon}$. This completes 
    the proof of the theorem.

	\section{Lower bound for the Gaussian model with
	adversarial contamination}\label{App:D}
	In this section we provide the proof of the lower bound on the 
	expected risk in the setting of GAC model. To this end, we first denote by $\mathcal{M}^\text{HC}_n(\varepsilon, \bmu^*)$ the set of joint probability distributions $\bfP_n$ of the random vectors 
	$\bX_1, \dots, \bX_n$ coming from Huber's contamination model. 
	Recall that Huber's contamination model reads as follows
	\begin{equation}
	    \bX_1, \dots, \bX_n
	    \stackrel{\text{i.i.d.}}{\sim}
        \bfP_{\varepsilon, \bmu^*, \bfQ},
	\end{equation}
	where $\bfP_{\varepsilon, \bmu^*, \bfQ} =  (1- \varepsilon) \bfP_{\bmu^*} + \varepsilon \bfQ$. It is evident that with probability $\varepsilon^n$ all observations $\bX_1, \dots, \bX_n$ can be outliers, \textit{i.e.},  drawn from distribution $\bfQ$. 
	This means that 
	on an event of non-zero probability, it is impossible to have 
	a bounded estimation error. To overcome this difficulty, one can
	focus on {\it{Huber's deterministic contamination (HDC) model}} 
	(see \citep{bateni2020minimax}). In this model, it is assumed that the number of outliers is at most $\lceil n\varepsilon \rceil$. The 
	set of joint probability distributions $\bfP_n$ coming from Huber's deterministic contamination model is denoted by $\mathcal{M}^{\text{HDC}}_n(\varepsilon, \bmu^*)$. We define the worst-case risks for these two models 
	\begin{align}
	    R_{\max}^{\textup{HC}}(\hat\bmu_n, \varepsilon) = \sup_{\bmu^*}\sup_{\bfP_n \in \mathcal{M}^{\textup{HC}}_n(\varepsilon, \bmu^*)} R_{\bfP_n} (\hat\bmu_n, \bmu^*), \\ R_{\max}^{\textup{HDC}}(\hat\bmu_n, \varepsilon) = \sup_{\bmu^*}\sup_{\bfP_n \in \mathcal{M}^{\textup{HDC}}_n(\varepsilon, \bmu^*)} R_{\bfP_n} (\hat\bmu_n, \bmu^*).
	\end{align}
	
	For Huber's contamination model, the following lower bound holds true.
    \begin{theorem}[Theorem 2.2 from \citep{chen2018}]\label{thm:lb-huber} There exists some universal constant $C> 0$ such that 
    \begin{equation}
        \inf_{\hat\bmu_n} \sup_{\bmu^*} \sup_{\mathbf P_n \in \mathcal{M}^{\textup{HC}}_n(\varepsilon, \bmu^*)} \mathbf{P} \bigg( \| \hat\bmu_n - \bmu^* \|_2 \ge C \| \bSigma\|_{\textup{op}}^{1/2}\bigg[\sqrt{\frac{\rSigma}{n}} + \varepsilon \bigg]\bigg) \ge 1/2,
    \end{equation}
    for any $\varepsilon \in [0,1]$.
    \end{theorem}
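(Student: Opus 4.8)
The stated bound is the maximum of two effects --- the irreducible error of estimating a Gaussian mean from $n$ clean samples and the irreducible bias produced by an $\varepsilon$-fraction of arbitrary outliers --- so the plan is to prove the two corresponding lower bounds separately and then merge them. Since
\[
C\|\bSigma\|_{\textup{op}}^{1/2}\Big(\sqrt{\tfrac{\rSigma}{n}}+\varepsilon\Big)\ \le\ 2C\,\|\bSigma\|_{\textup{op}}^{1/2}\max\Big(\sqrt{\tfrac{\rSigma}{n}},\,\varepsilon\Big),
\]
it is enough, after shrinking $C$, to establish (i) a lower bound of order $\|\bSigma\|_{\textup{op}}^{1/2}\varepsilon$ holding with probability at least $1/2$, and (ii) a lower bound of order $\|\bSigma\|_{\textup{op}}^{1/2}\sqrt{\rSigma/n}$ holding with probability at least $1/2$: in each regime one of the two controls the target radius. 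Both are proved on submodels of $\mathcal M_n^{\textup{HC}}(\varepsilon,\cdot)$, hence transfer to the full model.

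For (i) I would use Le Cam's two-point method, exploiting that two Huber-contaminated laws can be made \emph{identical}. Let $\bv$ be a top unit eigenvector of $\bSigma$, put $\bmu_0=\boldsymbol 0$, $\bmu_1=\gamma\bv$ with $\gamma=\sqrt{2\pi}\,\|\bSigma\|_{\textup{op}}^{1/2}\varepsilon$, and set $P_j=\mathcal N(\bmu_j,\bSigma)$. Then $\mathrm{TV}(P_0,P_1)=\mathrm{TV}\big(\mathcal N(0,\|\bSigma\|_{\textup{op}}),\mathcal N(\gamma,\|\bSigma\|_{\textup{op}})\big)\le \gamma/\sqrt{2\pi\|\bSigma\|_{\textup{op}}}=\varepsilon\le \varepsilon/(1-\varepsilon)$, which is exactly the condition for the existence of probability measures $Q_0,Q_1$ with $(1-\varepsilon)P_0+\varepsilon Q_0=(1-\varepsilon)P_1+\varepsilon Q_1$. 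The two product laws $\bfP_n^{(0)},\bfP_n^{(1)}$ then coincide while belonging, respectively, to $\mathcal M_n^{\textup{HC}}(\varepsilon,\bmu_0)$ and $\mathcal M_n^{\textup{HC}}(\varepsilon,\bmu_1)$; since $\|\bmu_0-\bmu_1\|_2=\gamma$, the balls of radius $\gamma/2$ around $\bmu_0$ and $\bmu_1$ are disjoint, so for every estimator $\max_{j\in\{0,1\}}\bfP_n^{(j)}\big(\|\hat\bmu_n-\bmu_j\|_2\ge\gamma/2\big)\ge 1/2$. (For $\varepsilon$ close to $1$ the total variation condition is vacuous and one separates the two points arbitrarily far, making the bound trivially infinite.)

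For (ii) I would restrict to the uncontaminated submodel: $\mathcal N(\bmu^*,\bSigma)^{\otimes n}\in\mathcal M_n^{\textup{HC}}(\varepsilon,\bmu^*)$ (take the contaminating law to be $\mathcal N(\bmu^*,\bSigma)$), and invoke the classical minimax lower bound for Gaussian mean estimation. Working in the eigenbasis of $\bSigma$ with eigenvalues $\lambda_1\ge\cdots\ge\lambda_p$, $\lambda_1=\|\bSigma\|_{\textup{op}}$, one uses a perturbation family indexed by $\{0,1\}^p$ whose $j$-th coordinate has magnitude of order $\sqrt{\lambda_j/n}$, so that flipping any single coordinate changes the law of the data by a total variation bounded away from $1$ uniformly in $j$. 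Assouad's lemma (equivalently, a Varshamov--Gilbert packing with these coordinate-dependent scales, or a Gaussian-prior Bayes-risk computation) then yields indistinguishable hypotheses separated by a distance of order $\sqrt{\sum_j\lambda_j/n}=\|\bSigma\|_{\textup{op}}^{1/2}\sqrt{\rSigma/n}$; calibrating the constant in the perturbation, so that the aggregated information is a fixed small fraction of the packing entropy, makes the failure probability at least $1/2$. When $\rSigma$ is of constant order this degenerates to a two-point argument along the dominant eigendirection.

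The main obstacle is item (ii) for a \emph{heterogeneous} spectrum: a packing with a common perturbation scale is driven down to $\sqrt{\lambda_{\min}(\bSigma)/n}$, and a packing calibrated to have uniform pairwise distances loses a $\log p$ factor --- either way one fails to recover $\tr(\bSigma)$. Letting the perturbation in direction $j$ scale with $\sqrt{\lambda_j}$ and exploiting the product structure through Assouad's lemma (or, equivalently, reducing block-by-block to the isotropic bound of \citep{chen2018}) is what produces the dependence on $\tr(\bSigma)=\rSigma\|\bSigma\|_{\textup{op}}$ rather than on $p$ or $p\,\lambda_{\min}(\bSigma)$, and recovers exactly $\|\bSigma\|_{\textup{op}}^{1/2}\sqrt{\rSigma/n}$. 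Merging (i) and (ii) with a common constant $C$ and selecting the larger of the two constructions according to whether $\varepsilon$ exceeds $\sqrt{\rSigma/n}$ gives the claim for all $\varepsilon\in[0,1]$.
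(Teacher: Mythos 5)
You should first note that the paper contains no proof of this statement: it is quoted verbatim (as Theorem~2.2 of \citep{chen2018}) and used as an imported ingredient in Appendix~\ref{App:D}, so there is no internal argument to compare against. Your reconstruction follows what is in fact the standard route of \citep{chen2018}: for the $\varepsilon$-term, the two-point/modulus-of-continuity argument based on the fact that $\mathrm{TV}(P_0,P_1)\le \varepsilon/(1-\varepsilon)$ allows one to choose contaminations $Q_0,Q_1$ making $(1-\varepsilon)P_0+\varepsilon Q_0$ and $(1-\varepsilon)P_1+\varepsilon Q_1$ identical; your calibration $\gamma=\sqrt{2\pi}\,\|\bSigma\|_{\textup{op}}^{1/2}\varepsilon$ along a top eigendirection, the bound $\mathrm{TV}\le \gamma/\sqrt{2\pi\|\bSigma\|_{\textup{op}}}$, and the resulting $\max_j \bfP^{(j)}_n(\|\hat\bmu_n-\bmu_j\|_2\ge\gamma/2)\ge 1/2$ are all correct, as is the reduction of the sum $\sqrt{\rSigma/n}+\varepsilon$ to the maximum of the two regimes.

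The one genuine soft spot is in item (ii). Assouad's lemma, as you invoke it, lower bounds the \emph{expected} (coordinatewise-additive) risk; it does not by itself deliver the statement actually claimed here, namely $\inf_{\hat\bmu_n}\sup_{\bmu^*}\bfP\big(\|\hat\bmu_n-\bmu^*\|_2\ge C\sqrt{\tr(\bSigma)/n}\big)\ge 1/2$, and Markov's inequality goes in the wrong direction. The Fano/packing variant you mention does give probability bounds but runs exactly into the heterogeneity issue you flag: a Varshamov--Gilbert packing controls Hamming distance, and the coordinates on which two codewords differ may carry only the smallest eigenvalues, so the $\ell_2$-separation need not be of order $\sqrt{\tr(\bSigma)/n}$. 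The clean way to close this is the Gaussian-prior computation you mention only parenthetically: put the prior $\mathcal N_p(0,\bSigma/n)$ on $\bmu^*$, note that the posterior is Gaussian with covariance $\bSigma/(2n)$ independent of the data, apply Anderson's lemma to get $\bfP(\|\hat\bmu_n-\bmu^*\|_2\ge t\mid \bX_{1:n})\ge \bfP(\|\bZ\|_2\ge t)$ with $\bZ\sim\mathcal N_p(0,\bSigma/(2n))$, and then lower bound the median of $\|\bZ\|_2$ by $c\sqrt{\tr(\bSigma)/n}$ using the lower tail of the weighted $\chi^2$ when $\rSigma$ exceeds a fixed constant, falling back on the one-dimensional two-point bound along the top eigendirection when $\rSigma=O(1)$ (in which case $\lambda_{\max}(\bSigma)\ge c\,\tr(\bSigma)$). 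With that substitution for the Assouad step, your proposal is a correct and complete proof of the cited bound.
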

    
    The combination of the lower bound from Theorem \ref{thm:lb-huber} and 
    the relation between the risks in $\mathcal{M}^{\text{HC}}_n(\varepsilon, \bmu^*)$ and $\mathcal{M}^{\text{HDC}}_n(\varepsilon, \bmu^*)$ (Prop. 1 from \citep{bateni2020minimax}) yields the desired lower bound for the 
    GAC model, as stated in the next proposition.
    
    \begin{proposition}[Lower bound for GAC]
    There exist some universal constants $c, \tilde{c} > 0$ such that 
    if $n > \tilde{c}$, then 
    \begin{equation}
        \inf_{\hat\bmu_n} R_{\max}(\hat\bmu_n, \bSigma, \varepsilon) \ge c \| \bSigma \|_{\textup{op}}^{1/2} \bigg(\sqrt{\frac{\rSigma}{n}} + \varepsilon\bigg). 
    \end{equation}
        
    \end{proposition}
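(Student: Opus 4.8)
The plan is to obtain the GAC lower bound from the minimax lower bound for Huber's contamination (HC) model, passing through the deterministic-contamination (HDC) model. First I would invoke \Cref{thm:lb-huber}, which gives a universal constant $C>0$ such that, for every $\varepsilon\in[0,1]$,
\[
\inf_{\hat\bmu_n}\ \sup_{\bmu^*}\ \sup_{\bfP_n\in\mathcal M^{\textup{HC}}_n(\varepsilon,\bmu^*)}\ \bfP\Big(\|\hat\bmu_n-\bmu^*\|_2\ge C\|\bSigma\|_{\textup{op}}^{1/2}\big(\sqrt{\rSigma/n}+\varepsilon\big)\Big)\ge \tfrac12 .
\]
The second step transfers this to the HDC model. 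The number of points drawn from the contaminating distribution under the i.i.d.\ HC mechanism with rate $\varepsilon'$ is $\mathrm{Binomial}(n,\varepsilon')$, and for $\varepsilon'\le\varepsilon/2$ this stays below $\lceil n\varepsilon\rceil$ with overwhelming probability once $n\varepsilon$ exceeds an absolute constant; Proposition~1 of \citep{bateni2020minimax} turns this remark into a comparison of the worst-case risks over $\mathcal M^{\textup{HC}}_n(\varepsilon/2,\cdot)$ and $\mathcal M^{\textup{HDC}}_n(\varepsilon,\cdot)$. Hence, for $n$ larger than some absolute constant $\tilde c$ and for every $\varepsilon$ with $n\varepsilon\ge\kappa$ (another absolute constant), the displayed inequality holds with $\mathcal M^{\textup{HC}}_n$ replaced by $\mathcal M^{\textup{HDC}}_n$, the constant $C$ replaced by a possibly smaller universal $c_1$, and $\tfrac12$ replaced by, say, $\tfrac14$.

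Next I would embed HDC into GAC and pass from probability to expectation. A sample agreeing with an i.i.d.\ $\mathcal N_p(\bmu^*,\bSigma)$ sample outside an index set of size at most $\lceil n\varepsilon\rceil\le 2n\varepsilon$ (valid once $n\varepsilon\ge1$) is a legitimate element of $\textup{GAC}(\bmu^*,\bSigma,2\varepsilon)$, so $\mathcal M^{\textup{HDC}}_n(\varepsilon,\bmu^*)\subseteq\textup{GAC}(\bmu^*,\bSigma,2\varepsilon)$ and $R_{\max}(\hat\bmu_n,\bSigma,2\varepsilon)$ is therefore at least the worst-case risk of $\hat\bmu_n$ over $\mathcal M^{\textup{HDC}}_n(\varepsilon,\cdot)$. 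Since $R_{\bfP_n}(\hat\bmu_n,\bmu^*)=\bfE^{1/2}[\|\hat\bmu_n-\bmu^*\|_2^2]\ge a\,\bfP\big(\|\hat\bmu_n-\bmu^*\|_2\ge a\big)^{1/2}$, taking $a=c_1\|\bSigma\|_{\textup{op}}^{1/2}(\sqrt{\rSigma/n}+\varepsilon)$ together with the HDC in-probability bound (probability $\ge\tfrac14$) yields $\inf_{\hat\bmu_n}R_{\max}(\hat\bmu_n,\bSigma,2\varepsilon)\ge \tfrac12 c_1\|\bSigma\|_{\textup{op}}^{1/2}(\sqrt{\rSigma/n}+\varepsilon)$; reparametrizing $\varepsilon\mapsto\varepsilon/2$ and using $\sqrt{\rSigma/n}+\varepsilon/2\ge\tfrac12(\sqrt{\rSigma/n}+\varepsilon)$ gives the claimed inequality for all $\varepsilon$ with $n\varepsilon\ge 2\kappa$.

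Finally I would dispose of the regime $n\varepsilon<2\kappa$, where $\varepsilon\le 2\kappa/n\le 2\kappa\sqrt{\rSigma/n}$ (since $\rSigma\ge1$ and $n\ge1$), so that $\sqrt{\rSigma/n}+\varepsilon$ is comparable to $\sqrt{\rSigma/n}$ and it suffices to bound $\inf_{\hat\bmu_n}R_{\max}(\hat\bmu_n,\bSigma,\varepsilon)$ from below by $c\,\|\bSigma\|_{\textup{op}}^{1/2}\sqrt{\rSigma/n}$; this is the classical minimax lower bound for estimating the mean of a single uncontaminated Gaussian sample with known covariance (a two-point argument, or a Gaussian prior on $\bmu^*$ followed by letting its variance go to infinity), which applies because $\mathcal N_p(\bmu^*,\bSigma)^{\otimes n}\in\textup{GAC}(\bmu^*,\bSigma,\varepsilon)$ for every $\varepsilon$. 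Combining the two regimes proves the proposition with $c$ and $\tilde c$ absolute. The step I expect to be the main obstacle is the HC$\to$HDC passage: the binomial concentration it relies on is informative only when $n\varepsilon$ is bounded below by an absolute constant, which is what forces the case split and the hypothesis $n>\tilde c$, and one must keep the ceilings in view so that the inclusion $\mathcal M^{\textup{HDC}}\subseteq\textup{GAC}$ holds literally --- hence the passage through contamination rates that differ by a constant factor.
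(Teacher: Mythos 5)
Your proposal is correct and follows essentially the same route as the paper: Chen et al.'s in-probability lower bound for Huber contamination, the HC$\to$HDC transfer from \citep{bateni2020minimax}, the inclusion of HDC into GAC, a conversion from probability to expectation, and a separate treatment of the small-contamination regime via the outlier-free Gaussian minimax bound. The only differences are bookkeeping choices (you split cases on $n\varepsilon$ versus an absolute constant and absorb the ceiling by doubling $\varepsilon$, while the paper splits on $\varepsilon$ versus $\sqrt{\rSigma/n}$ and uses the expectation-level inequality with the $re^{-n\varepsilon/6}$ remainder directly), which do not change the substance of the argument.
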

    
    \begin{proof}
    
    First, notice that it is sufficient to prove this bound for $\varepsilon > \sqrt{{\rSigma}/{n}}$. Indeed, if $\varepsilon < \sqrt{{\rSigma}/{n}}$ then one gets the desired lower bound by simply comparing GAC model with the standard outlier free model. More formally, 
    \begin{align}
        \inf_{\hat\bmu_n} R_{\max}(\hat\bmu_n, \bSigma, \varepsilon) \ge \inf_{\hat\bmu_n} R_{\max}^{\textup{OF}}(\hat\bmu_n) = \sqrt{\frac{\tr{\bSigma}}{n}},
    \end{align}
    where $R_{\max}^{\textup{OF}}(\hat\bmu_n)$ is the worst case risk in the classical outlier free setting where the observations are i.i.d. and drawn from $\mathcal{N}_p(\bmu^*, \bSigma)$. 
    
    Since the GAC model is more general than HDC model, then one clearly has 
    \begin{align}\label{eq:ac-hdc}
        \inf_{\hat\bmu_n} R_{\max}(\hat\bmu_n, \bSigma, \varepsilon) \ge \inf_{\hat\bmu_n} R_{\max}^{\textup{HDC}}(\hat\bmu_n, \varepsilon).
    \end{align}
    On the other hand, in view of Eq. (4) from \citep{bateni2020minimax}, 
    for every $r>0$,
    \begin{align}
        R_{\max}^{\textup{HDC}}(\hat\bmu_n, \varepsilon) + r e^{-n\varepsilon/6} \ge \sup_{\mathbf P_n \in \mathcal{M}^{\textup{HC}}_n(0.5\varepsilon, \bmu^*)} r\mathbf{P} \big( \| \hat\bmu_n - \bmu \|_2 \ge r\big). 
    \end{align}
    Using the result of Theorem \ref{thm:lb-huber} and taking first $r = C \| \bSigma\|_{\textup{op}}^{1/2}\big[\sqrt{{\rSigma}/{n}} + \varepsilon\big]$ then infimum of both sides one arrives at 
    \begin{align}\label{eq:lb-proof1}
        \inf_{\hat\bmu_n} R_{\max}^{\textup{HDC}}(\hat\bmu_n, \varepsilon) \ge (C/4) \| \bSigma\|_{\textup{op}}^{1/2}\bigg[\sqrt{\frac{\rSigma}{n}} + \varepsilon\bigg]\big(1 - 2e^{-n\varepsilon/6}\big). 
    \end{align}
    Now, using $\varepsilon > \sqrt{\rSigma/n}$ one gets that $n\varepsilon > \sqrt{\rSigma n} \ge \sqrt{n}$. Therefore, taking $n \ge 36\log^2(4)$, one can check that \eqref{eq:lb-proof1} yields
    \begin{align}
        \inf_{\hat\bmu_n} R_{\max}^{\textup{HDC}}(\hat\bmu_n, 2\varepsilon) \ge (C/8) \| \bSigma\|_{\textup{op}}^{1/2}\bigg[\sqrt{\frac{\rSigma}{n}} + \varepsilon\bigg].
    \end{align}
Then, the final result follows from \eqref{eq:ac-hdc} with constants $c = C/8$ and $\tilde{c} = 36\log^2(4)$.
    
    \end{proof}

	\end{document}